\renewcommand{\marginpar}[1]{}
\def\Empty{}
\newcommand\oplabel[1]{
  \def\OpArg{#1} \ifx \OpArg\Empty {} \else
  	\label{#1}
  \fi}
\long\def\realfig#1#2#3#4{
\begin{figure}[htbp]
\centerline{\includegraphics[width=#4]{#2}}
\caption[#1]{#3}
\oplabel{#1}
\end{figure}}
\newcommand{\comm}[1]{}
\def\PsfigVersion{1.10}
\def\setDriver{\DvipsDriver} 
\let\LaTeXAtSign=\@
\let\@=\relax
\edef\psfigRestoreAt{\catcode`\@=\number\catcode`@\relax}
\newwrite\@unused
\def\ps@typeout#1{{\let\protect\string\immediate\write\@unused{#1}}}
\def\DvipsDriver{
	\ps@typeout{psfig/tex \PsfigVersion -dvips}
\def\PsfigSpecials{\DvipsSpecials} 	\def\ps@dir{/}
\def\ps@predir{} }
\def\OzTeXDriver{
	\ps@typeout{psfig/tex \PsfigVersion -oztex}
	\def\PsfigSpecials{\OzTeXSpecials}
	\def\ps@dir{:}
	\def\ps@predir{:}
	\catcode`\^^J=5
}
\def\figurepath{./:}
\def\DoPaths#1{\expandafter\EachPath#1\stoplist}
\def\leer{}
\def\EachPath#1:#2\stoplist{
  \ExistsFile{#1}{\SearchedFile}
  \ifx#2\leer
  \else
    \expandafter\EachPath#2\stoplist
  \fi}
\def\ps@dir{/}
\def\ExistsFile#1#2{%
   \openin1=\ps@predir#1\ps@dir#2
   \ifeof1
       \closein1
   \else
       \closein1
        \ifx\ps@founddir\leer
           \edef\ps@founddir{#1}
        \fi
   \fi}
\def\get@dir#1{%
  \def\ps@founddir{}
  \def\SearchedFile{#1}
  \DoPaths\figurepath
}
\def\@nnil{\@nil}
\def\@empty{}
\def\@psdonoop#1\@@#2#3{}
\def\@psdo#1:=#2\do#3{\edef\@psdotmp{#2}\ifx\@psdotmp\@empty \else
    \expandafter\@psdoloop#2,\@nil,\@nil\@@#1{#3}\fi}
\def\@psdoloop#1,#2,#3\@@#4#5{\def#4{#1}\ifx #4\@nnil \else
       #5\def#4{#2}\ifx #4\@nnil \else#5\@ipsdoloop #3\@@#4{#5}\fi\fi}
\def\@ipsdoloop#1,#2\@@#3#4{\def#3{#1}\ifx #3\@nnil 
       \let\@nextwhile=\@psdonoop \else
      #4\relax\let\@nextwhile=\@ipsdoloop\fi\@nextwhile#2\@@#3{#4}}
\def\@tpsdo#1:=#2\do#3{\xdef\@psdotmp{#2}\ifx\@psdotmp\@empty \else
    \@tpsdoloop#2\@nil\@nil\@@#1{#3}\fi}
\def\@tpsdoloop#1#2\@@#3#4{\def#3{#1}\ifx #3\@nnil 
       \let\@nextwhile=\@psdonoop \else
      #4\relax\let\@nextwhile=\@tpsdoloop\fi\@nextwhile#2\@@#3{#4}}
\newdimen\fboxrule
\newdimen\fboxsep
\newdimen\ps@tempdima
\newbox\ps@tempboxa
\long\def\fbox#1{\leavevmode\setbox\ps@tempboxa\hbox{#1}\ps@tempdima\fboxrule
    \advance\ps@tempdima \fboxsep \advance\ps@tempdima \dp\ps@tempboxa
   \hbox{\lower \ps@tempdima\hbox
  {\vbox{\hrule height \fboxrule
          \hbox{\vrule width \fboxrule \hskip\fboxsep
          \vbox{\vskip\fboxsep \box\ps@tempboxa\vskip\fboxsep}\hskip 
                 \fboxsep\vrule width \fboxrule}
                 \hrule height \fboxrule}}}}
\newread\ps@stream
\newif\ifnot@eof       
\newif\if@noisy        
\newif\if@atend        
\newif\if@psfile       
\global\gdef\epsf@start{
\def\epsf@PS{PS}
\def\epsf@getbb#1{%
%
%
\openin\ps@stream=\ps@predir#1
\ifeof\ps@stream\ps@typeout{Error, File #1 not found}\else
%
%
   {\not@eoftrue \chardef\other=12
    \def\do##1{\catcode`##1=\other}\dospecials \catcode`\ =10
    \loop
       \if@psfile
	  \read\ps@stream to \epsf@fileline
       \else{
	  \obeyspaces
          \read\ps@stream to \epsf@tmp\global\let\epsf@fileline\epsf@tmp}
       \fi
       \ifeof\ps@stream\not@eoffalse\else
%
%
       \if@psfile\else
       \expandafter\epsf@test\epsf@fileline:. \\%
       \fi
%
%
          \expandafter\epsf@aux\epsf@fileline:. \\%
       \fi
   \ifnot@eof\repeat
   }\closein\ps@stream\fi}%
%
%
\long\def\epsf@test#1#2#3:#4\\{\def\epsf@testit{#1#2}
			\ifx\epsf@testit\epsf@start\else
\ps@typeout{Warning! File does not start with `\epsf@start'.  It may not be a PostScript file.}
			\fi
			\@psfiletrue} 
%
%
{\catcode`\%=12\global\let\epsf@percent=
%
%
%
\long\def\epsf@aux#1#2:#3\\{\ifx#1\epsf@percent
   \def\epsf@testit{#2}\ifx\epsf@testit\epsf@bblit
	\@atendfalse
        \epsf@atend #3 . \\%
	\if@atend	
	   \if@verbose{
		\ps@typeout{psfig: found `(atend)'; continuing search}
	   }\fi
        \else
        \epsf@grab #3 . . . \\%
        \not@eoffalse
        \global\no@bbfalse
        \fi
   \fi\fi}%
%
%
\def\epsf@grab #1 #2 #3 #4 #5\\{%
   \global\def\epsf@llx{#1}\ifx\epsf@llx\empty
      \epsf@grab #2 #3 #4 #5 .\\\else
   \global\def\epsf@lly{#2}%
   \global\def\epsf@urx{#3}\global\def\epsf@ury{#4}\fi}%
%
%
\def\epsf@atendlit{(atend)} 
\def\epsf@atend #1 #2 #3\\{%
   \def\epsf@tmp{#1}\ifx\epsf@tmp\empty
      \epsf@atend #2 #3 .\\\else
   \ifx\epsf@tmp\epsf@atendlit\@atendtrue\fi\fi}


\chardef\psletter = 11 
\chardef\other = 12

\newif \ifdebug 
\newif\ifc@mpute 
\c@mputetrue 

\let\then = \relax
\def\r@dian{pt }
\let\r@dians = \r@dian
\let\dimensionless@nit = \r@dian
\let\dimensionless@nits = \dimensionless@nit
\def\internal@nit{sp }
\let\internal@nits = \internal@nit
\newif\ifstillc@nverging
\def \Mess@ge #1{\ifdebug \then \message {#1} \fi}

{ 
	\catcode `\@ = \psletter
	\gdef \nodimen {\expandafter \n@dimen \the \dimen}
	\gdef \term #1 #2 #3%
	       {\edef \t@ {\the #1}
		\edef \t@@ {\expandafter \n@dimen \the #2\r@dian}%
		\t@rm {\t@} {\t@@} {#3}%
	       }
	\gdef \t@rm #1 #2 #3%
	       {{%
		\count 0 = 0
		\dimen 0 = 1 \dimensionless@nit
		\dimen 2 = #2\relax
		\Mess@ge {Calculating term #1 of \nodimen 2}%
		\loop
		\ifnum	\count 0 < #1
		\then	\advance \count 0 by 1
			\Mess@ge {Iteration \the \count 0 \space}%
			\Multiply \dimen 0 by {\dimen 2}%
			\Mess@ge {After multiplication, term = \nodimen 0}%
			\Divide \dimen 0 by {\count 0}%
			\Mess@ge {After division, term = \nodimen 0}%
		\repeat
		\Mess@ge {Final value for term #1 of 
				\nodimen 2 \space is \nodimen 0}%
		\xdef \Term {#3 = \nodimen 0 \r@dians}%
		\aftergroup \Term
	       }}
	\catcode `\p = \other
	\catcode `\t = \other
	\gdef \n@dimen #1pt{#1} 
}

\def \Divide #1by #2{\divide #1 by #2} 

\def \Multiply #1by #2
       {{
	\count 0 = #1\relax
	\count 2 = #2\relax
	\count 4 = 65536
	\Mess@ge {Before scaling, count 0 = \the \count 0 \space and
			count 2 = \the \count 2}%
	\ifnum	\count 0 > 32767 
	\then	\divide \count 0 by 4
		\divide \count 4 by 4
	\else	\ifnum	\count 0 < -32767
		\then	\divide \count 0 by 4
			\divide \count 4 by 4
		\else
		\fi
	\fi
	\ifnum	\count 2 > 32767 
	\then	\divide \count 2 by 4
		\divide \count 4 by 4
	\else	\ifnum	\count 2 < -32767
		\then	\divide \count 2 by 4
			\divide \count 4 by 4
		\else
		\fi
	\fi
	\multiply \count 0 by \count 2
	\divide \count 0 by \count 4
	\xdef \product {#1 = \the \count 0 \internal@nits}%
	\aftergroup \product
       }}

\def\r@duce{\ifdim\dimen0 > 90\r@dian \then   
		\multiply\dimen0 by -1
		\advance\dimen0 by 180\r@dian
		\r@duce
	    \else \ifdim\dimen0 < -90\r@dian \then  
		\advance\dimen0 by 360\r@dian
		\r@duce
		\fi
	    \fi}

\def\Sine#1%
       {{%
	\dimen 0 = #1 \r@dian
	\r@duce
	\ifdim\dimen0 = -90\r@dian \then
	   \dimen4 = -1\r@dian
	   \c@mputefalse
	\fi
	\ifdim\dimen0 = 90\r@dian \then
	   \dimen4 = 1\r@dian
	   \c@mputefalse
	\fi
	\ifdim\dimen0 = 0\r@dian \then
	   \dimen4 = 0\r@dian
	   \c@mputefalse
	\fi
	\ifc@mpute \then
		\divide\dimen0 by 180
		\dimen0=3.141592654\dimen0
		\dimen 2 = 3.1415926535897963\r@dian 
		\divide\dimen 2 by 2 
		\Mess@ge {Sin: calculating Sin of \nodimen 0}%
		\count 0 = 1 
		\dimen 2 = 1 \r@dian 
		\dimen 4 = 0 \r@dian 
		\loop
			\ifnum	\dimen 2 = 0 
			\then	\stillc@nvergingfalse 
			\else	\stillc@nvergingtrue
			\fi
			\ifstillc@nverging 
			\then	\term {\count 0} {\dimen 0} {\dimen 2}%
				\advance \count 0 by 2
				\count 2 = \count 0
				\divide \count 2 by 2
				\ifodd	\count 2 
				\then	\advance \dimen 4 by \dimen 2
				\else	\advance \dimen 4 by -\dimen 2
				\fi
		\repeat
	\fi		
			\xdef \sine {\nodimen 4}%
       }}

\def\Cosine#1{\ifx\sine\UnDefined\edef\Savesine{\relax}\else
		             \edef\Savesine{\sine}\fi
	{\dimen0=#1\r@dian\advance\dimen0 by 90\r@dian
	 \Sine{\nodimen 0}
	 \xdef\cosine{\sine}
	 \xdef\sine{\Savesine}}}	      

\def\psdraft{
	\def\@psdraft{0}
}
\def\psfull{
	\def\@psdraft{100}
}

\psfull

\newif\if@scalefirst
\def\psscalefirst{\@scalefirsttrue}
\def\psrotatefirst{\@scalefirstfalse}
\psrotatefirst

\newif\if@draftbox
\def\psnodraftbox{
	\@draftboxfalse
}
\def\psdraftbox{
	\@draftboxtrue
}
\@draftboxtrue

\newif\if@prologfile
\newif\if@postlogfile
\def\pssilent{
	\@noisyfalse
}
\def\psnoisy{
	\@noisytrue
}
\psnoisy
\newif\if@bbllx
\newif\if@bblly
\newif\if@bburx
\newif\if@bbury
\newif\if@height
\newif\if@width
\newif\if@rheight
\newif\if@rwidth
\newif\if@angle
\newif\if@clip
\newif\if@verbose
\def\@p@@sclip#1{\@cliptrue}
\newif\if@decmpr
\def\@p@@sfigure#1{\def\@p@sfile{null}\def\@p@sbbfile{null}\@decmprfalse
   \openin1=\ps@predir#1
   \ifeof1
	\closein1
	\get@dir{#1}
	\ifx\ps@founddir\leer
		\openin1=\ps@predir#1.bb
		\ifeof1
			\closein1
			\get@dir{#1.bb}
			\ifx\ps@founddir\leer
				\ps@typeout{Can't find #1 in \figurepath}
			\else
				\@decmprtrue
				\def\@p@sfile{\ps@founddir\ps@dir#1}
				\def\@p@sbbfile{\ps@founddir\ps@dir#1.bb}
			\fi
		\else
			\closein1
			\@decmprtrue
			\def\@p@sfile{#1}
			\def\@p@sbbfile{#1.bb}
		\fi
	\else
		\def\@p@sfile{\ps@founddir\ps@dir#1}
		\def\@p@sbbfile{\ps@founddir\ps@dir#1}
	\fi
   \else
	\closein1
	\def\@p@sfile{#1}
	\def\@p@sbbfile{#1}
   \fi
}
\def\@p@@sfile#1{\@p@@sfigure{#1}}
\def\@p@@sbbllx#1{
		\@bbllxtrue
		\dimen100=#1
		\edef\@p@sbbllx{\number\dimen100}
}
\def\@p@@sbblly#1{
		\@bbllytrue
		\dimen100=#1
		\edef\@p@sbblly{\number\dimen100}
}
\def\@p@@sbburx#1{
		\@bburxtrue
		\dimen100=#1
		\edef\@p@sbburx{\number\dimen100}
}
\def\@p@@sbbury#1{
		\@bburytrue
		\dimen100=#1
		\edef\@p@sbbury{\number\dimen100}
}
\def\@p@@sheight#1{
		\@heighttrue
		\dimen100=#1
   		\edef\@p@sheight{\number\dimen100}
}
\def\@p@@swidth#1{
		\@widthtrue
		\dimen100=#1
		\edef\@p@swidth{\number\dimen100}
}
\def\@p@@srheight#1{
		\@rheighttrue
		\dimen100=#1
		\edef\@p@srheight{\number\dimen100}
}
\def\@p@@srwidth#1{
		\@rwidthtrue
		\dimen100=#1
		\edef\@p@srwidth{\number\dimen100}
}
\def\@p@@sangle#1{
		\@angletrue
		\edef\@p@sangle{#1} 
}
\def\@p@@ssilent#1{ 
		\@verbosefalse
}
\def\@p@@sprolog#1{\@prologfiletrue\def\@prologfileval{#1}}
\def\@p@@spostlog#1{\@postlogfiletrue\def\@postlogfileval{#1}}
\def\@cs@name#1{\csname #1\endcsname}
\def\@setparms#1=#2,{\@cs@name{@p@@s#1}{#2}}
%
%
\def\ps@init@parms{
		\@bbllxfalse \@bbllyfalse
		\@bburxfalse \@bburyfalse
		\@heightfalse \@widthfalse
		\@rheightfalse \@rwidthfalse
		\def\@p@sbbllx{}\def\@p@sbblly{}
		\def\@p@sbburx{}\def\@p@sbbury{}
		\def\@p@sheight{}\def\@p@swidth{}
		\def\@p@srheight{}\def\@p@srwidth{}
		\def\@p@sangle{0}
		\def\@p@sfile{} \def\@p@sbbfile{}
		\def\@p@scost{10}
		\def\@sc{}
		\@prologfilefalse
		\@postlogfilefalse
		\@clipfalse
		\if@noisy
			\@verbosetrue
		\else
			\@verbosefalse
		\fi
}
%
%
\def\parse@ps@parms#1{
	 	\@psdo\@psfiga:=#1\do
		   {\expandafter\@setparms\@psfiga,}}
%
%
\newif\ifno@bb
\def\bb@missing{
	\if@verbose{
		\ps@typeout{psfig: searching \@p@sbbfile \space  for bounding box}
	}\fi
	\no@bbtrue
	\epsf@getbb{\@p@sbbfile}
        \ifno@bb \else \bb@cull\epsf@llx\epsf@lly\epsf@urx\epsf@ury\fi
}	
\def\bb@cull#1#2#3#4{
	\dimen100=#1 bp\edef\@p@sbbllx{\number\dimen100}
	\dimen100=#2 bp\edef\@p@sbblly{\number\dimen100}
	\dimen100=#3 bp\edef\@p@sbburx{\number\dimen100}
	\dimen100=#4 bp\edef\@p@sbbury{\number\dimen100}
	\no@bbfalse
}
\newdimen\p@intvaluex
\newdimen\p@intvaluey
\def\rotate@#1#2{{\dimen0=#1 sp\dimen1=#2 sp
		  \global\p@intvaluex=\cosine\dimen0
		  \dimen3=\sine\dimen1
		  \global\advance\p@intvaluex by -\dimen3
		  \global\p@intvaluey=\sine\dimen0
		  \dimen3=\cosine\dimen1
		  \global\advance\p@intvaluey by \dimen3
		  }}
\def\compute@bb{
		\no@bbfalse
		\if@bbllx \else \no@bbtrue \fi
		\if@bblly \else \no@bbtrue \fi
		\if@bburx \else \no@bbtrue \fi
		\if@bbury \else \no@bbtrue \fi
		\ifno@bb \bb@missing \fi
		\ifno@bb \ps@typeout{FATAL ERROR: no bb supplied or found}
			\no-bb-error
		\fi
		%
%
		\count203=\@p@sbburx
		\count204=\@p@sbbury
		\advance\count203 by -\@p@sbbllx
		\advance\count204 by -\@p@sbblly
		\edef\ps@bbw{\number\count203}
		\edef\ps@bbh{\number\count204}
		\if@angle 
			\Sine{\@p@sangle}\Cosine{\@p@sangle}
	        	{\dimen100=\maxdimen\xdef\r@p@sbbllx{\number\dimen100}
					    \xdef\r@p@sbblly{\number\dimen100}
			                    \xdef\r@p@sbburx{-\number\dimen100}
					    \xdef\r@p@sbbury{-\number\dimen100}}
%
                        \def\minmaxtest{
			   \ifnum\number\p@intvaluex<\r@p@sbbllx
			      \xdef\r@p@sbbllx{\number\p@intvaluex}\fi
			   \ifnum\number\p@intvaluex>\r@p@sbburx
			      \xdef\r@p@sbburx{\number\p@intvaluex}\fi
			   \ifnum\number\p@intvaluey<\r@p@sbblly
			      \xdef\r@p@sbblly{\number\p@intvaluey}\fi
			   \ifnum\number\p@intvaluey>\r@p@sbbury
			      \xdef\r@p@sbbury{\number\p@intvaluey}\fi
			   }
			\rotate@{\@p@sbbllx}{\@p@sbblly}
			\minmaxtest
			\rotate@{\@p@sbbllx}{\@p@sbbury}
			\minmaxtest
			\rotate@{\@p@sbburx}{\@p@sbblly}
			\minmaxtest
			\rotate@{\@p@sbburx}{\@p@sbbury}
			\minmaxtest
			\edef\@p@sbbllx{\r@p@sbbllx}\edef\@p@sbblly{\r@p@sbblly}
			\edef\@p@sbburx{\r@p@sbburx}\edef\@p@sbbury{\r@p@sbbury}
		\fi
		\count203=\@p@sbburx
		\count204=\@p@sbbury
		\advance\count203 by -\@p@sbbllx
		\advance\count204 by -\@p@sbblly
		\edef\@bbw{\number\count203}
		\edef\@bbh{\number\count204}
}
%
%
\def\in@hundreds#1#2#3{\count240=#2 \count241=#3
		     \count100=\count240	
		     \divide\count100 by \count241
		     \count101=\count100
		     \multiply\count101 by \count241
		     \advance\count240 by -\count101
		     \multiply\count240 by 10
		     \count101=\count240	
		     \divide\count101 by \count241
		     \count102=\count101
		     \multiply\count102 by \count241
		     \advance\count240 by -\count102
		     \multiply\count240 by 10
		     \count102=\count240	
		     \divide\count102 by \count241
		     \count200=#1\count205=0
		     \count201=\count200
			\multiply\count201 by \count100
		 	\advance\count205 by \count201
		     \count201=\count200
			\divide\count201 by 10
			\multiply\count201 by \count101
			\advance\count205 by \count201
		     \count201=\count200
			\divide\count201 by 100
			\multiply\count201 by \count102
			\advance\count205 by \count201
		     \edef\@result{\number\count205}
}
\def\compute@wfromh{
		\in@hundreds{\@p@sheight}{\@bbw}{\@bbh}
		\edef\@p@swidth{\@result}
}
\def\compute@hfromw{
	        \in@hundreds{\@p@swidth}{\@bbh}{\@bbw}
		\edef\@p@sheight{\@result}
}
\def\compute@handw{
		\if@height 
			\if@width
			\else
				\compute@wfromh
			\fi
		\else 
			\if@width
				\compute@hfromw
			\else
				\edef\@p@sheight{\@bbh}
				\edef\@p@swidth{\@bbw}
			\fi
		\fi
}
\def\compute@resv{
		\if@rheight \else \edef\@p@srheight{\@p@sheight} \fi
		\if@rwidth \else \edef\@p@srwidth{\@p@swidth} \fi
}
%
\def\compute@sizes{
	\compute@bb
	\if@scalefirst\if@angle
	\if@width
	   \in@hundreds{\@p@swidth}{\@bbw}{\ps@bbw}
	   \edef\@p@swidth{\@result}
	\fi
	\if@height
	   \in@hundreds{\@p@sheight}{\@bbh}{\ps@bbh}
	   \edef\@p@sheight{\@result}
	\fi
	\fi\fi
	\compute@handw
	\compute@resv}
\def\OzTeXSpecials{
	\special{empty.ps /@isp {true} def}
	\special{empty.ps \@p@swidth \space \@p@sheight \space
			\@p@sbbllx \space \@p@sbblly \space
			\@p@sbburx \space \@p@sbbury \space
			startTexFig \space }
	\if@clip{
		\if@verbose{
			\ps@typeout{(clip)}
		}\fi
		\special{empty.ps doclip \space }
	}\fi
	\if@angle{
		\if@verbose{
			\ps@typeout{(rotate)}
		}\fi
		\special {empty.ps \@p@sangle \space rotate \space} 
	}\fi
	\if@prologfile
	    \special{\@prologfileval \space } \fi
	\if@decmpr{
		\if@verbose{
			\ps@typeout{psfig: Compression not available
			in OzTeX version \space }
		}\fi
	}\else{
		\if@verbose{
			\ps@typeout{psfig: including \@p@sfile \space }
		}\fi
		\special{epsf=\@p@sfile \space }
	}\fi
	\if@postlogfile
	    \special{\@postlogfileval \space } \fi
	\special{empty.ps /@isp {false} def}
}
\def\DvipsSpecials{
	\special{ps::[begin] 	\@p@swidth \space \@p@sheight \space
			\@p@sbbllx \space \@p@sbblly \space
			\@p@sbburx \space \@p@sbbury \space
			startTexFig \space }
	\if@clip{
		\if@verbose{
			\ps@typeout{(clip)}
		}\fi
		\special{ps:: doclip \space }
	}\fi
	\if@angle
		\if@verbose{
			\ps@typeout{(clip)}
		}\fi
		\special {ps:: \@p@sangle \space rotate \space} 
	\fi
	\if@prologfile
	    \special{ps: plotfile \@prologfileval \space } \fi
	\if@decmpr{
		\if@verbose{
			\ps@typeout{psfig: including \@p@sfile.Z \space }
		}\fi
		\special{ps: plotfile "`zcat \@p@sfile.Z" \space }
	}\else{
		\if@verbose{
			\ps@typeout{psfig: including \@p@sfile \space }
		}\fi
		\special{ps: plotfile \@p@sfile \space }
	}\fi
	\if@postlogfile
	    \special{ps: plotfile \@postlogfileval \space } \fi
	\special{ps::[end] endTexFig \space }
}
%
%
\def\psfig#1{\vbox {
	%
	\ps@init@parms
	\parse@ps@parms{#1}
	\compute@sizes
	\ifnum\@p@scost<\@psdraft{
		\PsfigSpecials 
		\vbox to \@p@srheight sp{
			\hbox to \@p@srwidth sp{
				\hss
			}
		\vss
		}
	}\else{
		\if@draftbox{		
			\hbox{\fbox{\vbox to \@p@srheight sp{
			\vss
			\hbox to \@p@srwidth sp{ \hss 
			 \hss }
			\vss
			}}}
		}\else{
			\vbox to \@p@srheight sp{
			\vss
			\hbox to \@p@srwidth sp{\hss}
			\vss
			}
		}\fi

	}\fi
}}
\psfigRestoreAt
\setDriver
\let\@=\LaTeXAtSign

\newtheorem{thm}{Theorem}[section]
\newtheorem{cor}[thm]{Corollary}
\newtheorem{lem}[thm]{Lemma}
\newtheorem{prop}[thm]{Proposition}

\newcommand{\script}[1]{\mbox{\tiny $#1$}}

\newenvironment{pf}{\proof[\proofname]}{\endproof}
\newenvironment{pf*}[1]{\proof[#1]}{\endproof}
\usepackage{euscript}

\usepackage[OT2,OT1]{fontenc}
\def\cyr{\fontencoding{OT2}\fontfamily{wncyr}\selectfont}
\def\cyi{\fontencoding{OT2}\fontfamily{wncyi}\selectfont}
\newcommand{\cal}[1]{{\mathcal #1}}
\newcommand{\BBB}[1]{{\mathbb #1}}
\newcommand{\ignore}[1]{{}}
\newcommand{\Area}{\operatorname{Area}}

\newcommand{\Jac}{\operatorname{Jac}}

\newcommand{\nothing}{{}}

\renewcommand{\theschw}{}
\renewcommand{\thetet}{}
\renewcommand{\thesfmc}{}
\renewcommand{\thesectl}{}

\theoremstyle{definition}
\newtheorem{defn}{Definition}[section]

\newcommand{\QED}{\rlap{$\sqcup$}$\sqcap$\smallskip}

\theoremstyle{remark}

\renewcommand{\thenotation}{}
\newcommand{\diam}{\operatorname{diam}}
\newcommand{\diag}{\operatorname{diag}}
\newcommand{\dist}{\operatorname{dist}}
\newcommand{\meas}{\operatorname{meas}}
\newcommand{\cl}{\operatorname{cl}}
\renewcommand{\mod}{\operatorname{mod}}
\newcommand{\tl}{\tilde}
\newcommand{\h}{\hat}
\newcommand{\wtl}{\widetilde}
\newcommand{\eps}{\epsilon}
\newcommand{\EE}{{\Ccal{E}}}
\newcommand{\tw}{{{\rm\boldsymbol T}}}
\newcommand{\Eps}{\operatorname{E}}
\newcommand{\ps}{\operatorname{PS}}

\newcommand{\ceq}{\,\displaystyle{\Large\mbox{$\sim$}}_{\text{\hspace{-14pt}\tiny conf}}\,}
\newcommand{\ceqw}{\underset{\text{\tiny conf,W}}{\mbox{\Large$ \sim$}}}
\newcommand{\heq}{\underset{\text{\tiny hyb}}{\mbox{\Large$ \sim$}}}
\newcommand{\qeq}{\underset{\text{\tiny qc}}{\mbox{\Large$ \sim$}}}
\newcommand{\hyb}{\operatorname{\it Hyb}}
\newcommand{\ext}{\operatorname{\it Ext}}
\newcommand{\qc}{\operatorname{\it QC}}
\newcommand{\bE}{\operatorname{\bf E}}

\newcommand{\vareps}{\varepsilon}

\newcommand{\Cbb}[1]{{{\Bbb{#1}}}}
\newcommand{\Ccal}[1]{{{\cal{#1}}}}
\newcommand{\aaa}[1]{{{\mathbf{#1}}}}
\newcommand{\crit}{{{\aaa C}}}
\newcommand{\cu}{{{\aaa C}_U}}
\newcommand{\cur}{{{\aaa C}_U^\RR}}
\newcommand{\cv}{{{\aaa C}_V}}
\newcommand{\pr}{{\cal P}}
\newcommand{\hol}{{\aaa H}}
\newcommand{\mfld}{{\aaa M}}
\newcommand{\bran}{{{\aaa X}}}
\newcommand{\tu}{{{\aaa B}_U}}
\renewcommand{\k}{\kappa}
\newcommand{\srk}{{\aaa S}_{r_1,\ldots,r_\k }}

\renewcommand{\Re}{\operatorname{Re}}
\renewcommand{\Im}{\operatorname{Im}}

\numberwithin{equation}{section}
\newcommand{\thmref}[1]{Theorem~\ref{#1}}
\newcommand{\propref}[1]{Proposition~\ref{#1}}
\newcommand{\secref}[1]{\S\ref{#1}}
\newcommand{\lemref}[1]{Lemma~\ref{#1}}
\newcommand{\corref}[1]{Corollary~\ref{#1}}
\newcommand{\figref}[1]{Figure~\ref{#1}}
\newcommand{\ang}[2]{\widehat{(#1,#2)}}
\newcommand{\C}[1]{{\Bbb C_{#1}}}
\newcommand{\I}{P}
\newcommand{\cRG}{{\EuScript R}}

\newcommand{\cQ}{{\cal Q}}
\newcommand{\cJ}{{\cal J}}
\newcommand{\cA}{{\cal A}}
\newcommand{\cU}{{\cal U}}
\newcommand{\cW}{{\cal W}}
\newcommand{\cM}{{\cal M}}
\newcommand{\cV}{{\cal V}}
\newcommand{\cF}{{\cal F}}
\newcommand{\cG}{{\cal G}}
\newcommand{\cB}{{\aaa B}}
\newcommand{\cT}{{\cal T}}
\newcommand{\cI}{{\cal I}}
\newcommand{\cN}{{\cal N}}
\newcommand{\cP}{{\cal P}}
\newcommand{\cC}{{\cal C}}
\newcommand{\cH}{{\cal H}}
\newcommand{\cR}{{\cal R}}
\newcommand{\cL}{{\cal L}}
\newcommand{\cD}{{\cal D}}
\newcommand{\cE}{{\cal E}}
\newcommand{\cS}{{\cal S}}
\newcommand{\cY}{{\cal Y}}
\newcommand{\cK}{{\cal K}}
\newcommand{\PP}{{\Bbb P}}
\newcommand{\CC}{{\Bbb C}}
\newcommand{\RR}{{\Bbb R}}
\newcommand{\TT}{{\Bbb T}}
\newcommand{\ZZ}{{\Bbb Z}}
\newcommand{\NN}{{\Bbb N}}
\newcommand{\DD}{{\Bbb D}}
\newcommand{\HH}{{\Bbb H}}
\renewcommand{\AA}{{\Bbb A}}
\newcommand{\QQ}{{\Bbb Q}}
\newcommand{\bA}{{\mathbf A}}
\newcommand{\bH}{{\mathbf H}}
\newcommand{\bT}{{\mathbf T}}
\newcommand{\bF}{{\mathbf F}}
\newcommand{\bU}{{\mathbf U}}
\newcommand{\bB}{{\mathbf B}}
\newcommand{\bC}{{\mathbf C}}
\newcommand{\bW}{{\mathbf W}}
\newcommand{\bY}{{\mathbf Y}}
\newcommand{\bZ}{{\mathbf Z}}
\newcommand{\bO}{{\mathbf O}}
\newcommand{\bD}{{\mathbf D}}
\newcommand{\bQ}{{\mathbf Q}}

\newcommand{\cren}{\cR_{\text cyl}}
\newcommand{\crenn}{\hat{\cR}_{\text cyl}}
\newcommand{\ov}[1]{\overline{#1}}
\newcommand{\sm}{\setminus}

\newcommand{\dfe}{\operatorname{Def}^\eps}
\newcommand{\df}{\operatorname{Def}}
\newcommand{\dfer}{\operatorname{Def}^\eps_\RR}
\newcommand{\dfr}{\operatorname{Def}_\RR}
\newcommand{\Ker}{\operatorname{Ker}}

\begin{document}
\addtolength{\evensidemargin}{-0.7in}
\addtolength{\oddsidemargin}{-0.7in}

\title[Almost commuting pairs]{Renormalization of almost commuting pairs}

\author{D. Gaidashev and M. Yampolsky }
\date{\today}
\maketitle

\begin{abstract}
In this paper we give a new proof of hyperbolicity of renormalization of critical circle maps using the formalism of almost-commuting pairs. We extend renormalization to two-dimensional dissipative maps of the annulus which are small perturbations of one-dimensional critical circle maps. Finally, we demonstrate that a two-dimensional map which lies in the stable set of the renormalization operator possesses attractor which is topologically a circle. Such a circle is {\it critical}: the dynamics on it is topologically, but not smoothly, conjugate to a rigid rotation.
\end{abstract}

\section{Preliminaries}
\subsection{Introduction}

Our motivation in this paper comes from  the study of attractors of small two-dimensional perturbations of critical circle 
maps. Let us recall, that
a critical circle map $f$ is a $C^3$-smooth orientation preserving homeomorphism of the circle $\TT\equiv\RR/\ZZ$ which
has a single critical point $x_0\in\TT$ whose order  $n$ is an odd integer. 
To fix the ideas, we will set $x_0=0$,
and will assume that $n=3$. By way of example, consider the
two-parameter {\it Arnold's family}
$$f_{a,\omega}(x)=x-\frac{a}{2\pi} \sin 2\pi x+\omega.$$
Note that each $f_{a,\omega}$ commutes with the unit translation,
$$f_{a,\omega}(x+1)=f_{a,\omega}(x)+1,$$
and hence it projects to a well-defined map of the circle $\TT\equiv \RR/\ZZ$, which we denote 
$\hat f_{a,\omega}$. For $|a|< 1$, this map is  an analytic diffeomorphism, and for $|a|=1$ it is a critical circle map.
This  illustrates  the fact that a generic analytic homeomorphism of the circle which lies on the
boundary of the set of analytic diffeomorphisms is a critical circle map.

For a circle homeomorphism $f$, we will denote $\rho(f)\in\TT$ its rotation number. 
For a lift $\bar f:\RR\to\RR$, we obtain a representative of $\rho(f)$ given by $\lim\bar f^n(x)/n$. We denote it by 
$\rho(\bar f)\in\RR$.
As was shown by Yoccoz in \cite{Yoc}, every critical circle map $f$ with $\rho(f)\notin\QQ$ is topologically conjugate to the
rigid rotation
$$R_{\rho(f)}(x)\equiv x+\rho(f)\mod \ZZ.$$
Identifying $\rho(f)$ with its 
representative in $[0,1)$, we can represent $\rho(f)$ as a contined fraction with positive terms
\begin{equation}
\label{rotation-number}
\rho(f)=\cfrac{1}{r_0+\cfrac{1}{r_1+\cfrac{1}{r_2+\dotsb}}}
\end{equation}
For convenience, further on we will abbreviate this expression as $[r_0,r_1,r_2,\ldots]$.
The numbers $r_i$ are determined uniquely if and only if $\rho(f)$ is
irrational. In this case we shall say that $\rho(f)$ (or $f$ itself) is of the type bounded by $B$
if $\sup r_i\leq B$; it is of a periodic type if the sequence $\{r_i\}$ is periodic.

Let $\AA_r$ denote the annulus $\{(x,y)\in\RR^2,\;|y|<r\}/\ZZ\supset \TT$ and let $F$ be a real-analytic map 
$$F:\AA_r\to\AA_r.$$
We let $$\Lambda(F)=\cap_{n\in\NN}F^n(\AA_r),$$ 
and refer to it as the {\it attractor} of $F$; we further call it a minimal attractor when the restriction $F|_{\Lambda(F)}$ is minimal.
In the case when $f$ is a map of the circle, we can trivially extend it to the second coordinate, setting
$F_f(x,y)=(f(x),0)$; in this case, $\Lambda(F)=\TT$. Suppose, $f$ is an analytic diffeomorphism of $\TT$.
Considerations of normal hyperbolicity imply that if $G$ is a sufficiently small smooth perturbations of $F_f$,
 the attractor $\Lambda(G)$ is a smooth circle, and furthermore, when $\Lambda(G)$ is minimal, the dynamics of $G$ on $\Lambda(G)$ is 
smoothly conjugate to the irrational rotation. 
Recently, E.~Pujals \cite{Puj} asked a question, whether, when considering small perturbations of critical circle maps, one would observe
``critical'' invariant circles: that is, topological circles $\Lambda(G)$ on which the dynamics is topologically, but not smoothly,
conjugate to an irrational rotation. This question can be asked in a typical low-parameter family of perturbations of critical circle maps, or for a specific family of examples. Pujals proposed looking at the perturbed Arnold family, consisting of quotients under 
$x\equiv x+1$ of maps of the form
 $$(f_{a,\omega}(x)+y,\eps(f_{a,\omega}(x)-x+y)),$$
where $\eps$ is a small parameter. Here, if we, for instance, fix the rotation number $$\rho_*=(\sqrt{5}-1)/2=[1,1,1,1,\ldots],$$
one would expect that possessing a critical circle with rotation number $\rho_*$ would be a codimension $1$ phenomenon, occuring on the boundary of the set in which the attractor is a non-critical circle with the same rotation number.

In this paper, we confirm that critical circles exist in typical families, and explain the criticality phenomenon in terms of 
hyperbolicity of renormalization, which is a subject of this paper in it own right. Briefly, maps of the annulus with a critical circle
with rotation number $\rho_*$ (for example) lie in the stable manifold of the one-dimensional hyperbolic fixed point of renormalization.

Of course, renormalization of critical circle maps is a classical subject,
and one of the central themes in the development of modern one-dimensional dynamics. We refer the reader to the papers \cite{Ya3,Ya4}
of the second author in which the main renormalization conjectures, known as Lanford's Program, were proved. The preceding historical
development of the subject is described in \cite{Ya3}. 
The ``classical'' definition of renormalization of critical circle maps uses the language of {\it commuting pairs}, as described below.
Analytic commuting pairs provided the setting for proving the existence of renormalization horseshoe attractor \cite{dF2,dFdM2,Ya4}. 
However, there was a conceptual difficulty in proving hyperbolicity in this setting, as the space of analytic commuting pairs does
 not possess a natural structure  of a Banach manifold.

This difficulty was finessed by the
second author using a concept of {\it cylinder renormalization}, introduced in \cite{Ya3}. Cylinder renormalization operator $\cren$
has two key properties, necessary for the study of hyperbolic properties of the renormalization horseshoe attractor:
\begin{enumerate}
\item $\cren$ acts on a Banach manifold (of analytic maps of the circle, whose domain of analyticity includes a certain fixed annulus);
\item the operator $\cren$ is smooth (in fact, analytic). 
\end{enumerate}
Cylinder renormalization has since become an important tool in one-dimensional renormalization theory. It applies to analytic maps with
Siegel  disks \cite{Ya5,GaY}; in the limiting case it becomes the all-important {\it parabolic renormalization} \cite{EY,IS}; and 
very recently it has been applied to the study of critical circle maps with non-integer critical exponents \cite{GoY}. 

However, the question of proving hyperbolicity in the setting of commuting pairs has remained relevant. One of the main reasons for this is that cylinder renormalization does not extend readily to small two-dimensional perturbations of critical circle maps.
The definition of $\cren$ relies on the Uniformization Theorem of doubly-connected domains of one-dimensional Complex Analysis. This 
definition does not naturally generalize to two-dimensional maps. In this paper, we revisit the problem of hyperbolicity of
 renormalization. As will be seen in the next section, we use a ``classical'' definition of renormalization and
the definition of a Banach manifold in which renormalization becomes smooth (analytic) -- and thus satisfy the above
conditions (1)-(2) for commuting pairs.

We then give a new proof of renormalization hyperbolicity -- in the ``classical'' setting of commuting pairs. This allows us to 
apply our renormalization to small two-dimensional perturbations
of critical circle maps. We find a suitable smooth extension of renormalization to dissipative maps of the annulus in two dimension,
and prove renormalization hyperbolicity for such maps.
Finally, we apply our renormalization results to the study of dissipative attractors of small perturbations of critical circle maps,
to prove a version of Pujals' conjectures.

\subsection{Commuting pairs}
 As discussed in some detail in \cite{Ya3}, the space of critical circle maps is
ill-suited to define renormalization. The pioneering works on the subject (\cite{ORSS} and \cite{FKS})
 circumvented this difficulty
by replacing critical circle maps with different objects:
\begin{defn}
A  {\it  $C^r$-smooth (or $C^\omega$) critical commuting pair} $\zeta=(\eta,\xi)$ consists of two 
$C^r$-smooth  (or $C^\omega$) orientation preserving interval homeomorphisms 
$\eta:I_\eta\to \eta(I_\eta),\;
\xi:I_{\xi}\to \xi(I_\xi)$, where
\begin{itemize}
\item[(I)]{$I_\eta=[0,\xi(0)],\; I_\xi=[\eta(0),0]$;}
\item[(II)]{Both $\eta$ and $\xi$ have homeomorphic extensions to interval
neighborhoods of their respective domains {\it with the same degree of
smoothness}, that is $C^r$ (or $C^\omega$), which commute, 
$\eta\circ\xi=\xi\circ\eta$;}
\item[(III)]{$\xi\circ\eta(0)\in I_\eta$;}
\item[(IV)]{$\eta'(x)\ne 0\ne \xi'(y) $, for all $x\in I_\eta\setminus\{0\}$,
 and all $y\in I_\xi\setminus\{0\}$;}
\item[(V)] each of the maps $\eta$ and $\xi$ has a cubic critical point at $0$:
$$\eta'(0)=\eta''(0)=\xi'(0)=\xi''(0)=0,\text{ and }\eta'''(0)\neq 0\neq \xi'''(0) .$$
\end{itemize}
\end{defn}

\realfig{compair}{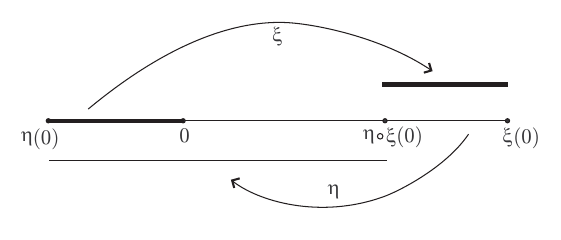}{A commuting pair}{0.8\textwidth}

\noindent
The commutation condition allows one to ``seamlessly'' iterate the extensions of the maps of a commuting pair. 

Given a critical commuting pair $\zeta=(\eta,\xi)$
we can regard the interval $I=[\eta(0),\xi\circ\eta(0)]$ as a circle, identifying 
$\eta(0)$ and $\xi\circ\eta(0)$ and define $f_\zeta:I\to I$ by 
$$f_\zeta=\left\{\begin{array}{l}
                    \eta\circ\xi(x) \text{ for }x\in [\eta(0),0]\\
                    \eta(x)\text{ for } x\in [0,\xi\circ\eta(0)]
                \end{array}\right.
$$
The mapping $\xi$ extends to a $C^r$- (or $C^\omega$-) diffeomorphism of open
neighborhoods of  $\eta(0)$ and $\xi\circ\eta(0)$. Using it as a local chart we turn 
the interval $I$ into a closed one-dimensional manifold $M$. Condition (II) above implies that
the mapping $f_\zeta$ projects to a well-defined $C^3$-smooth homeomorphism $F_\zeta:M\to M$.
Identifying $M$ with the circle by a diffeomorphism $\phi:M\to \TT$
we recover a critical circle mapping $f^\phi=\phi\circ F_\zeta\circ\phi^{-1}$.
The critical circle mappings corresponding to two different choices
of $\phi$ are conjugated by a diffeomorphism, and thus we recovered
a $C^r$- (or $C^\omega$) smooth conjugacy class of circle mappings from a critical commuting
pair.

Let $f$ be a critical circle mapping, whose rotation number $\rho$
has a continued fraction expansion (\ref{rotation-number}) with
at least $m+1$ terms, and let $p_m/q_m=[r_0,\ldots,r_{m-1}]$. Let $I_m$ denote the closed arc of the circle connecting $0$ with $f^{q_m}(0)$ and not containing $f^{q_{m+1}}(0)$.  The pair of iterates $f^{q_{m+1}}$
and $f^{q_m}$ restricted to the circle arcs $I_m$ and $I_{m+1}$
correspondingly can be viewed as a critical commuting pair  in
the following way.
Let $\bar f$ be the lift of $f$ to the real line satisfying $\bar f '(0)=0$,
and $0<\bar f (0)<1$. For each $m>0$ let $\bar I_m\subset \Bbb R$ 
denote the closed 
interval adjacent to zero which projects down to the interval $I_m$.
Let $\tau :\Bbb R\to \Bbb R$ denote the translation $x\mapsto x+1$.
Let $\eta :\bar I_m\to \Bbb R$, $\xi:\bar I_{m+1}\to \Bbb R$ be given by
$\eta\equiv \tau^{-p_{m+1}}\circ\bar f^{q_{m+1}}$,
$\xi\equiv \tau^{-p_m}\circ\bar f^{q_m}$. Then the pair of maps
$(\eta|_{\bar I_m},\xi|_{\bar I_{m+1}})$ forms a critical commuting pair
corresponding to $(f^{q_{m+1}}|_{I_m},f^{q_m}|_{I_{m+1}})$.
Henceforth we shall  simply denote this commuting pair by
\begin{equation}
\label{real1}
(f^{q_{m+1}}|_{I_m},f^{q_m}|_{I_{m+1}}).
\end{equation}

The {\it height} $\chi(\zeta)$
of a critical commuting pair $\zeta=(\eta,\xi)$ is equal to $r$,
if 
$$0\in [\eta^r(\xi(0)),\eta^{r+1}(\xi(0))].$$
 If no such $r$ exists,
we set $\chi(\zeta)=\infty$, in this case the map $\eta|_{I_\eta}$ has a 
fixed point.  For a pair $\zeta$ with $\chi(\zeta)=r<\infty$ one verifies directly that the
mappings $\eta|_{[0,\eta^r(\xi(0))]}$ and $\eta^r\circ\xi|_{I_\xi}$
again form a commuting pair.
For a commuting pair $\zeta=(\eta,\xi)$ we will denote by 
$\wtl\zeta$ the pair $(\wtl\eta|_{\wtl{I_\eta}},\wtl\xi|_{\wtl{I_\xi}})$
where tilde  means rescaling by the linear factor $\xi(0)$;
$$\wtl\zeta(z)=((\xi(0))^{-1}\eta(\xi(0) z),(\xi(0))^{-1}\xi(\xi(0) z)).$$
Note that the domain of definition of the first map of the rescaled pair is $\wtl{I_\eta}=[0,1]$.

\begin{defn} We say that a real commuting pair $\zeta=(\eta,\xi)$ is {\it renormalizable} if $\chi(\zeta)<\infty$.
The {\it renormalization} of a renormalizable commuting pair $\zeta=(\eta,
\xi)$ is the commuting pair
\begin{center}
${\cal{R}}\zeta=(
\widetilde{\eta^r\circ\xi}|_{ \widetilde{I_{\xi}}},\; \widetilde\eta|_{\widetilde{[0,\eta^r(\xi(0))]}}).$
\end{center}
\end{defn}

\noindent
The non-rescaled pair $(\eta^r\circ\xi|_{I_\xi},\eta|_{[0,\eta^r(\xi(0))]})$ will be referred to as the 
{\it pre-renormalization} $p{\cal R}\zeta$ of the commuting pair $\zeta=(\eta,\xi)$. Suppose $\{\zeta_i\}_{i=1}^{k-1}$ is a sequence of renormalizable pairs such that $\zeta_0=\zeta$ and $\zeta_i=p\cR\zeta_{i-1}$. We call $\zeta_k=p\cR\zeta_{k-1}$ the $k$-th pre-renormalization of $\zeta$; and $\widetilde{\zeta_k}$ the $k$-th  renormalization of $\zeta$ and write
$$\zeta_k=p\cR^k\zeta,\;\widetilde{\zeta_k}=\cR^k\zeta.$$
Let $\zeta_k=(\eta_k,\xi_k)$. The domains of $\eta_k$ and $\xi_k$ will be denoted $I_k$ and $J_k$ correspondingly.

For a pair $\zeta$ we define its {\it rotation number} $\rho(\zeta)\in[0,1]$ to be equal to the 
continued fraction $[r_0,r_1,\ldots]$ where $r_i=\chi({\cal R}^i\zeta)$. 
In this definition $1/\infty$ is understood as $0$, hence a rotation number is rational
if and only if only finitely many renormalizations of $\zeta$ are defined;
if $\chi(\zeta)=\infty$, $\rho(\zeta)=0$.
Thus defined, the rotation number of a commuting pair can be viewed as a rotation number in
the usual sense:
\begin{prop}
\label{rotation number}
The rotation number of the mapping $F_\zeta$ is equal to $\rho(\zeta)$.
\end{prop}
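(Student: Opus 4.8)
The plan is to connect the combinatorial rotation number $\rho(\zeta)=[r_0,r_1,\ldots]$, defined through the heights $r_i=\chi(\cR^i\zeta)$, with the analytic rotation number of the circle homeomorphism $F_\zeta:M\to M$ obtained by gluing. First I would recall the standard fact from the theory of continued fractions that $\rho(F_\zeta)$ is determined by the sequence of ``return combinatorics'': namely, for an irrational rotation number $\alpha=[a_0,a_1,a_2,\ldots]$, the closest return times $q_m$ satisfy $q_{m+1}=a_m q_m+q_{m-1}$, and the orbit of a point exhibits exactly $a_m$ consecutive returns to one side before jumping to the other. Thus it suffices to show that the height $\chi(\zeta)$ records precisely this data: that is, $\chi(\zeta)=r_0$ means that under $F_\zeta$ the first return of the relevant arc happens after $r_0$ applications of the ``long'' branch $\eta$, which is exactly what the first partial quotient of $\rho(F_\zeta)$ encodes.

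The key steps, in order, are as follows. (1) Using the gluing construction, identify $F_\zeta$ explicitly: on the arc corresponding to $[\eta(0),0]$ it is $\eta\circ\xi$, on the arc corresponding to $[0,\xi\circ\eta(0)]$ it is $\eta$; so a single iterate of $F_\zeta$ corresponds either to $\eta$ or to $\eta\circ\xi$ depending on which arc the point lies in. (2) Show by a direct dynamical count that if $\chi(\zeta)=r<\infty$, then the ``first return map'' of $F_\zeta$ to the subinterval $I_\xi$ (viewed inside $M$) is realized by $\eta^r\circ\xi$, and that the complementary dynamics is governed by $\eta$ restricted to $[0,\eta^r(\xi(0))]$ — this is exactly the statement, already noted in the excerpt, that these two maps again form a commuting pair, and it is the pre-renormalization $p\cR\zeta$. (3) Translate this into the continued fraction: the count in step (2) says the first partial quotient of $\rho(F_\zeta)$ is $r_0=\chi(\zeta)$, and the first-return map $p\cR\zeta$ (after rescaling, $\cR\zeta$) induces a circle map $F_{\cR\zeta}$ whose rotation number is the Gauss map applied to $\rho(F_\zeta)$, i.e. $\rho(F_{\cR\zeta})=\{1/\rho(F_\zeta)\}$. (4) Iterate: by induction $r_i=\chi(\cR^i\zeta)$ is the $i$-th partial quotient of $\rho(F_\zeta)$, so $\rho(F_\zeta)=[r_0,r_1,\ldots]=\rho(\zeta)$. (5) Handle the degenerate cases: if only finitely many renormalizations are defined ($\chi(\cR^k\zeta)=\infty$ for some $k$), then $F_{\cR^k\zeta}$ has a fixed point (since $\eta|I_\eta$ does), so $\rho$ is the corresponding rational value, matching the convention $1/\infty=0$; in particular $\chi(\zeta)=\infty$ gives $\rho=0$.

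I expect the main obstacle to be step (2): carefully verifying that the first-return map of the glued circle map $F_\zeta$ to the short arc is given precisely by $\eta^r\circ\xi$, with no off-by-one error in the exponent and with the branches applied in the correct order. This requires tracking how the commutation relation $\eta\circ\xi=\xi\circ\eta$ and condition (III), $\xi\circ\eta(0)\in I_\eta$, interact with the definition of the height, and checking that the intermediate iterates genuinely stay in the complementary arc so that the branch $\eta$ (rather than $\eta\circ\xi$) is the one being applied at each step. Once the first-return identification is pinned down, the passage to continued fractions via the Gauss map is standard, and the degenerate cases are immediate from the presence of a fixed point of $\eta$.
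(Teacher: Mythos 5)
The paper itself gives no proof of this proposition: it is stated as a classical fact (going back to the commuting-pair formalism of [ORSS], [FKS] and to [dFdM1]), immediately after the definition of $\rho(\zeta)$, so your argument has to be judged on its own merits rather than against a proof in the text. Your overall scheme is the standard one and is workable, but the key step (2) is wrong as stated: the first-return map of $F_\zeta$ to the arc $I_\xi$ alone is \emph{not} $\eta^r\circ\xi$. Already for the rigid pair $(T_\rho,T_{-1})$ with $r=\lfloor 1/\rho\rfloor$ one sees that $\eta^r\circ\xi(I_\xi)$ straddles $0$: the points of $I_\xi$ on one side are mapped by $\eta^r\circ\xi$ into the complementary interval $[0,\eta^r(\xi(0))]$ and return to $I_\xi$ only after one further application of $\eta$, so the return map to $I_\xi$ has the two branches $\eta^r\circ\xi$ and $\eta^{r+1}\circ\xi$. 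The correct statement --- and the one your induction actually needs --- is that the pre-renormalization $p\cR\zeta=(\eta^r\circ\xi|_{I_\xi},\,\eta|_{[0,\eta^r(\xi(0))]})$ is the first-return map of $F_\zeta$ to the \emph{union} $I_\xi\cup[0,\eta^r(\xi(0))]$, i.e.\ to the fundamental interval of the renormalized pair, with return words $\eta^r\circ\xi$ on $I_\xi$ and $\eta$ on $[0,\eta^r(\xi(0))]$; this is exactly the commuting-pair analogue of the classical fact that the first return of a circle map to $I_m\cup I_{m+1}$ is $(f^{q_{m+1}}|_{I_m},f^{q_m}|_{I_{m+1}})$.

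A second, softer issue is step (3): the identity $\rho(F_{\cR\zeta})=\{1/\rho(F_\zeta)\}$ is not a free ``standard'' ingredient --- it is essentially the one-level case of the proposition you are proving, so invoking it makes the induction dangerously close to circular. The self-contained way to close the argument is to track, through the tower of first-return maps, the number $q_m$ of $F_\zeta$-iterates realized by the branches of $p\cR^m\zeta$ on their intervals; these satisfy $q_{m+1}=r_m q_m+q_{m-1}$ with $r_m=\chi(\cR^m\zeta)$, and the return structure shows that the orbit of $0$ under $F_\zeta$ has the cyclic order (equivalently, the closest-return times $q_m$) of the rotation by $[r_0,r_1,\dots]$. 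This places $\rho(F_\zeta)$ in the nested continued-fraction cylinders determined by $r_0,\dots,r_{m-1}$ for every $m$, giving $\rho(F_\zeta)=[r_0,r_1,\dots]$ in the infinitely renormalizable case, while your step (5) (a fixed point of $\eta_k$ when $\chi(\cR^k\zeta)=\infty$ yields a periodic orbit of $F_\zeta$ with rotation number $[r_0,\dots,r_{k-1}]$) settles the remaining case. With the correction of step (2) and this replacement of the Gauss-map appeal, your proof is sound.
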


\noindent
There is an  advantage in defining $\rho(\zeta)$ using a sequence of heights in
removing the ambiguity in prescribing a continued fraction expansion to rational rotation numbers
in a renormalization-natural way.

\subsection{Dynamical partitions and real {\it a priori} bounds}
\label{sec:partition}
We need to recall the definition of a dynamical partition, which becomes somewhat technical in the language of commuting pairs.
Consider the space $\cI$ of multi-indices $\bar s=(a_1,b_1,a_2,b_2,\ldots,a_m,b_m)$ where $a_j\in \NN$ for $2\leq m$, $a_1\in\NN\cup\{0\}$,
$b_j\in\NN$ for $1\leq j\leq m-1$, and $b_m\in\NN\cup\{0\}$. 
We introduce a partial ordering on multi-indices:
$\bar s\succ \bar t$ if $\bar s=(a_1,b_1,a_2,b_2,\ldots,a_m,b_m)$, $\bar t=(a_1,b_1,\ldots,a_k,b_k,c,d)$, where $k<m$ and 
either $c< a_{k+1}$ and $ d=0$ or $c=a_{k+1}$ and $d< b_{k+1}$.

For a pair of maps $\zeta=(\eta,\xi)$ and $\bar s $ as above we will denote 
$$\zeta^{\bar s}\equiv\xi^{b_m}\circ\eta^{a_m}\circ\cdots\circ\xi^{b_2}\circ\eta^{a_2}\circ\xi^{b_1}\circ \eta^{a_1}.$$
Similarly, 
$$\zeta^{-\bar s}\equiv (\zeta^{\bar s})^{-1}=(\eta^{a_1})^{-1}\circ(\xi^{b_1})^{-1}\circ\cdots\circ(\eta^{a_m})^{-1}\circ (\xi^{b_m})^{-1}.$$

Let us define the {\it $n$-th dynamical partition} $\cP_n$ of $\zeta=(\eta,\xi)$ which is at least $n$ times renormalizable.
Namely, consider the $n$-th pre-renormalization 
$$\zeta_{n}=(\eta_{n}|_{I_n},\xi_{n}|_{J_n}),\text{ where }I_n=[0,\xi_{n}(0)]\text{ and }J_n=[0,\eta_{n}(0)].$$
We let $\bar s_n,\bar t_n\in\cI$ be defined by
\begin{equation}
  \label{sntn}
  \eta_{n}=\zeta^{\bar s_n},\text{ and }\xi_{n}=\zeta^{\bar t_n}.
  \end{equation}
Now consider the collection of intervals
$$\cP_n\equiv \{\zeta^{\bar w}(I_n)\text{ for all }\bar w\prec \bar s_n\text{ and }\zeta^{\bar w}(J_n)\text{ for all }\bar w\prec \bar t_n\}.$$
It is easy to see that:
\begin{itemize}
\item[(a)] $\underset{H\in\cP_n}\cup H=[\eta(0),\xi(0)]$;
\item[(b)] for any two distinct elements $H_1$ and $H_2$ of $\cP_n$, the interiors of $H_1$ and $H_2$ are disjoint.
\end{itemize}
\realfig{partition}{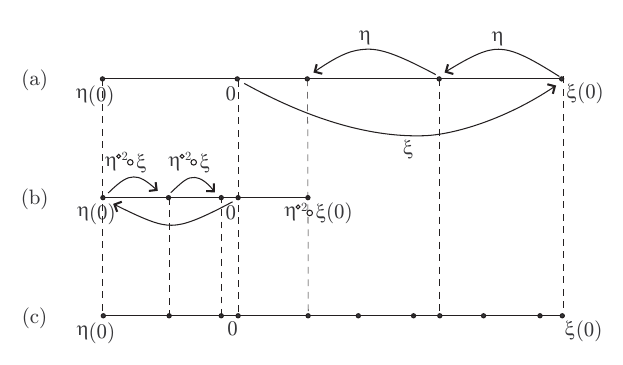}{The 1-st and 2-nd dynamical partitions for a pair $\zeta$ with $\rho(\zeta)=[2,2,\ldots]$:
(a) forming the partition of level 1; (b) the 1-st dynamical partition of the pre-renormalization $p\cR\zeta$; (c) the 2-nd
dynamical partition of $\zeta$.
}{0.8\textwidth}
We denote $\overline{\cP_n}$ the set of boundary points of the $n$-th dynamical partition\footnote{We note that typically, in the literature, dynamical partitions are described for critical circle maps, rather than commuting pairs, as this leads to a simpler notation. The translation from one notation to the other is straightforward, see e.g.  \cite{dF1,dF2}.}.

Successive renormalizations of a $C^3$-smooth commuting pair with an irrational rotation number form a pre-compact family,
all of the limit points of which are {\it analytic}. For a  strong version of this statement, known as {\it real a priori bounds},
 see \cite{dFdM1}; we will need the following consequence of compactness:
\begin{prop}
\label{realbounds}
There exists a universal constant $C_0>1$ such that the following holds. Let $S$ be a compact set of $C^3$-smooth
commuting pairs (note that $S$ could consist of a single pair). Then there exists $N=N(S)$ such that for all $n\geq N$ the following holds. Let $\zeta\in S$ be at least $n$ times renormalizable. Let $I$ and  $J$ be two {\it adjacent} intervals of the 
$n$-th dynamical partition of $\zeta$. Then $I$ and $J$ are $C_0$-commensurable:
$$\frac{1}{C_0}|I|<|J|<C_0|I|.$$
In particular,
denoting $p\cR^n\zeta=(\eta',\xi')$, we have
$$\frac{1}{C_0}|I_{\xi'}|<|I_{\eta'}|<C_0|I_{\xi'}|.$$ 
\end{prop}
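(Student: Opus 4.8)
The plan is to derive the Proposition from the real \emph{a priori} bounds of \cite{dFdM1} together with the pre-compactness of renormalization orbits recalled above; the single-pair case of the statement is, in effect, exactly those bounds. So I would first quote \cite{dFdM1} in the form: there is a universal $C_0>1$ such that for every $C^3$-smooth pair $\zeta$ with $\rho(\zeta)\notin\QQ$ there is an integer $N(\zeta)<\infty$ for which, whenever $n\ge N(\zeta)$, any two adjacent intervals of $\cP_n(\zeta)$ are $C_0$-commensurable and $\cR^n\zeta$ lies in a fixed compact family $\cK$ of pairs with uniformly bounded geometry and a cubic critical point. It then remains to (a) make the warm-up time $N(\zeta)$ uniform over $S$, and (b) see that the ``in particular'' clause is contained in the main assertion.

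Point (b) is immediate: if $p\cR^n\zeta=(\eta',\xi')$ then, by condition (I), $I_{\eta'}=[0,\xi'(0)]$ and $I_{\xi'}=[\eta'(0),0]$ are precisely the two elements of $\cP_n(\zeta)$ meeting at $0$, hence adjacent, and the desired inequality is the main assertion for this pair. (Alternatively, since $\cR^n\zeta\in\cK$ and $(\eta,\xi)\mapsto|I_\eta|/|I_\xi|$ is continuous and strictly positive on the compact set $\cK$, it is bounded there from above and below.) For a general adjacent pair $I,J\in\cP_n(\zeta)$ with $n\ge N(S)$, I would realize each of $I$, $J$ as the image of $I_{\eta'}$ or $I_{\xi'}$ under a single monotone branch of the iterates of $\zeta$ (the branch obtained by restricting to the element of an intermediate partition containing $I\cup J$); since $\cR^n\zeta\in\cK$ this branch carries a definite amount of Koebe space, so the Koebe distortion theorem bounds its distortion universally, and $|I|/|J|$ becomes comparable, up to a universal factor, to $|I_{\eta'}|/|I_{\xi'}|$. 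Combining with the previous bound and enlarging $C_0$ gives a universal constant.

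For (a): the estimates of \cite{dFdM1} that produce $N(\zeta)$ depend on $\zeta$ only through its $C^3$-size and through the lower bound $\min(|\eta'''(0)|,|\xi'''(0)|)$; on the compact set $S$ the first is bounded and, by condition (V) together with compactness, the second is bounded away from $0$. Hence $N(\zeta)\le N(S)$ for all $\zeta\in S$. (Equivalently, by contradiction: a sequence $\zeta_k\in S$, $n_k\to\infty$, with adjacent intervals of $\cP_{n_k}(\zeta_k)$ violating $C_0$-commensurability would, after passage to a subsequence convergent in $S$, contradict the single-pair bounds once the bad geometry is transported by the cross-ratio distortion inequalities underlying \cite{dFdM1}, whose constants depend only on the uniform data above.)

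The genuinely hard content is imported wholesale from \cite{dFdM1}: the Schwarz-type cross-ratio inequalities and the Koebe-space estimates, which simultaneously give pre-compactness of $\{\cR^n\zeta\}$ and the distortion control of monotone branches used in (b). The one delicate step that is specific to the present Proposition is the observation in (a) that $N(\zeta)$ is controlled by quantities that are uniform on the compact family $S$; the rest is bookkeeping with dynamical partitions.
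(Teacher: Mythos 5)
Your derivation matches the paper's treatment: the paper gives no proof of Proposition~\ref{realbounds}, simply invoking the real {\it a priori} bounds of \cite{dFdM1} together with pre-compactness of renormalization orbits, which is exactly the route you take (single-pair bounds from \cite{dFdM1}, then uniformity of the warm-up time $N$ over the compact set $S$ by compactness, with the ``in particular'' clause being the case of the two central atoms meeting at $0$). Your extra bookkeeping with distortion of monotone branches is consistent with the estimates underlying \cite{dFdM1}, so there is nothing to correct.
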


\subsection{Renormalization horseshoe}
In \cite{Ya2} we constructed a horseshoe attractor for renormalization of analytic maps.
Denote $\bar\Sigma$ the space of bi-infinite sequences 
$$(\ldots,r_{-k},\ldots,r_{-1},r_0,r_1,\ldots,r_k,\ldots)\text{ with }r_i\in\NN\cup\{\infty\}$$
equipped with the weak topology. To talk about convergence of analytic commuting pairs $\zeta=(\eta,\xi)$ it will be convenient to consider
them as pairs of analytic maps with their domains of definition:
$$\eta:D_\eta\to \CC,\; \xi:D_\xi\to\CC,$$
where $D_\eta$ and $D_\xi$ are real-symmetric topological disks containing $I_\eta$ and $I_\xi$ respectively.
When convenient, we will write $\zeta=(\eta|_{D_\eta},\xi|_{D_\xi})$. We recall that a sequence $\zeta_n=(\eta_n|_{D_{\eta_n}},\xi_n|_{D_{\xi_n}})$ converges to
$\zeta_\infty=(\eta_\infty|_{D_{\eta_\infty}},\xi_\infty|_{D_{\xi_\infty}})$ in the sense of Carath{\'e}odory if:
\begin{itemize}
\item for each Hausdorff limit point $K$ of the sequence $\CC\setminus D_{\eta_n}$, the domain $D_{\eta_\infty}$ is a connected component of $\CC\setminus K$; and similarly for $D_{\xi_\infty}$;
 \item the maps $\eta_n\to\eta_\infty$ and $\xi_n\to\xi_\infty$ uniformly on compact subsets of $D_{\eta_\infty}$, $D_{\xi_\infty}$ respectively. 
\end{itemize}
We refer the reader to \cite{McM1} where Carath{\'e}odory convergence is introduced in a renormalization context, and \cite{Ya3} which contains a detailed discussion of Carath{\'e}odory convergence for the space of analytic commuting pairs, and, in particular, introduces the corresponding topology on this space.

\begin{thm}[{\bf Renormalization horseshoe \cite{Ya2}}]
\label{existence of attractor}
There exists an $\cal R$-invariant set ${\cal X}$ consisting of analytic commuting pairs with irrational 
rotation numbers with the following properties. The operator $\cR$ continuously extends to the closure (in the sense of Carath{\'e}odory convergence)
$$\cA\equiv\overline{\cal X}$$
and the action of $\cal R$ on $\cal A$ is topologically
conjugate to the two-sided shift $\sigma:\bar \Sigma\to\bar\Sigma$:
$$i\circ{\cal R}\circ i^{-1}=\sigma$$
so that if $\zeta=i^{-1}(\ldots,r_{-k},\ldots,r_{-1},r_0,r_1,\ldots,r_k,\ldots)$ then
$\rho(\zeta)=[r_0,r_1,\ldots,r_k,\ldots]$. For any analytic commuting pair $\zeta$ with an irrational rotation number
we have
$${\cal R}^n\zeta\to \cal A$$
in the Carath{\'e}odory sense. Moreover, for any two analytic commuting pairs $\zeta$, $\zeta'$ with $\rho(\zeta)=\rho(\zeta')$
we have 
$$\dist({\cal R}^n\zeta,{\cal R}^n\zeta')\to 0$$
for the uniform distance between analytic extensions of the renormalized pairs on compact sets.
\end{thm}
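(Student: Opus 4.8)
The plan is to deduce this statement from the construction of the renormalization horseshoe in \cite{Ya2}, transported into the language of commuting pairs, with the two a priori bounds---real (Proposition~\ref{realbounds}) and complex---as the main analytic inputs. First I would record the complex a priori bounds: there is a universal $\mu>0$ such that for every analytic commuting pair $\zeta$ with irrational rotation number and every sufficiently large $n$, the pre-renormalization $p\cR^n\zeta$ extends to a holomorphic commuting pair whose maps are univalent on $\mu$-neighborhoods of the rescaled domains, with uniformly bounded distortion. Combined with Proposition~\ref{realbounds}, which controls the commensurability of the rescaled intervals, this makes the family $\{\cR^n\zeta\}$ precompact in the topology of uniform convergence of analytic extensions on compact subsets, and every limit point is again a holomorphic commuting pair with the same kind of definite geometry.

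Next I would define the invariant set. Let ${\cal X}$ be the set of analytic commuting pairs $\zeta$ with irrational rotation number admitting a full backward orbit $(\zeta_{-k})_{k\geq 0}$ under $\cR$, with $\zeta_0=\zeta$ and $\cR\zeta_{-k}=\zeta_{-k+1}$, whose forward and backward renormalizations have uniformly definite complex geometry; equivalently, ${\cal X}$ is the maximal relatively compact $\cR$-invariant set on which $\cR$ is invertible. Setting $\cA=\overline{\cal X}$, the heights $\chi(\cR^i\zeta)$ are finite along orbits in ${\cal X}$, while the closure also contains pairs some of whose renormalizations are parabolic (height $\infty$); checking that $\cR$ extends continuously across these is the usual parabolic-renormalization limiting argument, and yields the coding $i(\zeta)=(\ldots,\chi(\cR^{-1}\zeta),\chi(\zeta),\chi(\cR\zeta),\ldots)$ onto the full shift $\sigma:\bar\Sigma\to\bar\Sigma$. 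The identity $i\circ\cR\circ i^{-1}=\sigma$ is immediate from the definitions of $\chi$ and $\cR$, and continuity of $i$ and $i^{-1}$ follows from the weak topology on $\bar\Sigma$ together with the precompactness above. To see that $i$ is a bijection---hence a homeomorphism, by compactness---I would prove surjectivity by realizing each finite truncation $(r_{-m},\ldots,r_m)$ by an honest analytic pair (for instance one arising from a critical circle map in the Arnold family) and passing to a subsequential limit of renormalizations via the complex bounds; the limit is a full orbit with the prescribed combinatorics. Injectivity is the rigidity statement that two pairs in ${\cal X}$ with the same bi-infinite itinerary coincide, which I would import from the hyperbolicity of renormalization proved in \cite{Ya4} (and \cite{dFdM2} in bounded type).

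The convergence $\cR^n\zeta\to\cA$ for an arbitrary analytic $\zeta$ with irrational $\rho(\zeta)$ then follows because every limit point of $\{\cR^n\zeta\}$ has definite complex geometry in both time directions, hence lies in ${\cal X}\subset\cA$, while the family itself is precompact. Finally, for $\dist(\cR^n\zeta,\cR^n\zeta')\to 0$ when $\rho(\zeta)=\rho(\zeta')$: on the real trace this $C^0$ convergence comes from the real a priori bounds of \cite{dFdM1} together with the combinatorial rigidity of critical circle maps of equal irrational rotation number; to upgrade it to uniform-on-compacts convergence of the analytic extensions one uses the complex bounds and a normal-families argument---a sequence of univalent maps on a fixed annular neighborhood, uniformly bounded, whose restrictions to the real axis tend to each other, converges on every smaller neighborhood, e.g.\ by the Schwarz lemma.

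The genuinely deep inputs are the complex a priori bounds and the exponential contraction along same-rotation-number orbits; both are imported from \cite{Ya3,Ya4}, and once they are in hand the remaining work is symbolic bookkeeping and compactness. I expect the main obstacle in a self-contained write-up to be the faithfulness of the translation between the cylinder-renormalization framework of \cite{Ya2,Ya3}, in which the horseshoe was originally built, and the commuting-pair framework used here---in particular ensuring that ${\cal X}$, the coding $i$, and the continuous extension of $\cR$ to parabolic combinatorics all transport correctly.
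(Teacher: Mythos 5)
There is nothing to compare against inside the paper: \thmref{existence of attractor} is not proved here at all, it is imported verbatim from the second author's earlier work (the set $\cX$ and the coding are constructed in \cite{Ya2,Ya4}, with the bounded-type case due to de Faria and de Faria--de Melo \cite{dF1,dF2,dFdM2}), and the paper simply cites those references. Your sketch is a fair reconstruction of the strategy of those papers -- complex a priori bounds giving precompactness of renormalization orbits, coding by the heights $\chi(\cR^i\zeta)$, realization of arbitrary bi-infinite combinatorics by limits of renormalizations, and a rigidity statement supplying both injectivity of the coding and the convergence $\dist(\cR^n\zeta,\cR^n\zeta')\to 0$ -- but at every genuinely hard step you are importing exactly the content of \cite{Ya2,Ya4}, so what you have is an annotated citation rather than an independent proof, which is also how the paper treats the theorem.

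Two corrections to the attributions, since they affect the logical order of this paper. First, the input you need for injectivity of $i$ and for the same-rotation-number convergence is the rigidity of bi-infinite towers of holomorphic commuting pairs proved in \cite{Ya2,Ya4} (a McMullen-type removability argument), not ``hyperbolicity of renormalization'': hyperbolicity is \cite{Ya3}, it lies downstream of the horseshoe, and it is precisely what the present paper is re-proving, so grounding the horseshoe on it would invert the intended logical structure (and your remark that real bounds plus ``combinatorial rigidity'' give the $C^0$ convergence understates the point -- for unbounded type that convergence is the hard theorem of \cite{Ya2,Ya4}; real bounds alone only give precompactness). Second, your closing worry about translating from the cylinder-renormalization framework is misplaced: the horseshoe of \cite{Ya2,Ya4} was constructed directly in the (holomorphic) commuting-pair setting -- cylinder renormalization enters only in \cite{Ya3} for hyperbolicity -- which is exactly why this paper can quote the theorem in the commuting-pair language with no transport argument at all.
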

We will denote $\cA_B$ the subset of the attractor consisting of pairs with rotation numbers of a type bounded by $B$. Its existence,
and the corresponding version of \thmref{existence of attractor} was shown by E.~de~Faria (see \cite{dF1,dF2} and also \cite{dFdM2}).

Let $\zeta=(\eta,\xi)$ be a commuting pair such that $\xi(0)=1$. Denote $C^0([0,1])$ the Banach space of bounded $C^0$ functions on the 
interval $[0,1]$ with the uniform norm. We can identify $\zeta$ with a point in $\RR\times C^0([0,1])\times C^0([0,1])$ by 
\begin{equation}
\label{corresp1}
\zeta\mapsto (\eta(0),\eta(x),\frac{1}{\eta(0)}\xi(\eta(0)x)).
\end{equation}
This induces a distance on the set of commuting pairs, which we denote $\dist_{C^0}$.
We note that the following has been recently proven by W.~de~Melo and P.~Guarino \cite{GdM}:
\begin{thm}
\label{thmGuarino}
There exists $\delta>0$ such that the following holds. Let $\zeta_1$ and $\zeta_2$ be two $C^3$-smooth commuting pairs with the same 
 irrational rotation number $\rho=\rho(\zeta_1)=\rho(\zeta_2)$ of bounded type.  Then there exists $C>0$ such that
$$\dist_{C^0}(\cR^n\zeta_1,\cR^n\zeta_2)<C(1+\delta)^{-n}. $$
\end{thm}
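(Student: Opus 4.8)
The plan is to reduce the statement to exponential contraction of renormalization on the attractor $\cA_B$ and then transfer it along the (topological) stable lamination of $\cR$. The starting point is \thmref{existence of attractor} together with its bounded-type version $\cA_B$: for a fixed bound $B$ the attractor $\cA_B$ is a compact $\cR$-invariant set on which $\cR$ is conjugate to the two-sided shift on $B$-bounded sequences, and any analytic commuting pair with a $B$-bounded irrational rotation number renormalizes towards $\cA_B$. The heart of the matter is a hyperbolicity statement on $\cA_B$, which is precisely the content of the renormalization-hyperbolicity theorem this paper sets out to prove (in the classical commuting-pairs setting): along the fiber $\{\rho=\text{const}\}$ the operator $\cR$ contracts exponentially with a uniform rate $(1+\delta)^{-1}$ for some $\delta=\delta(B)>0$. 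So the first step is to invoke that hyperbolicity result to obtain, for analytic pairs $\zeta_1,\zeta_2$ with the same $B$-bounded rotation number lying in (a neighborhood of) $\cA_B$, the bound $\dist_{C^0}(\cR^n\zeta_1,\cR^n\zeta_2)\le C(1+\delta)^{-n}$.

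Second, I would remove the ``analytic and near the attractor'' hypothesis. Given two $C^3$-smooth commuting pairs $\zeta_1,\zeta_2$ with a common $B$-bounded irrational rotation number $\rho$, the real \emph{a priori} bounds (Proposition \ref{realbounds}, applied to the compact set $S=\{\zeta_1,\zeta_2\}$, or rather to the closure of their renormalization orbits) guarantee that after $N=N(\zeta_1,\zeta_2)$ steps the pairs $\cR^n\zeta_j$ lie in a fixed compact family of commuting pairs, all of whose limit points are analytic, and moreover are $C_0$-commensurable so that the normalization $\xi(0)=1$ used in \eqref{corresp1} makes $\dist_{C^0}$ comparable to the intrinsic geometry. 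Compactness of this family plus the convergence $\cR^n\zeta\to\cA_B$ places the tails $\{\cR^n\zeta_j\}_{n\ge N}$ in an arbitrarily small $C^0$-neighborhood of $\cA_B$; since renormalization of smooth pairs is eventually (essentially) governed by the analytic limiting dynamics, one may replace $\cR^N\zeta_j$ by nearby analytic pairs $\hat\zeta_j$ with the same rotation number, apply step one to $\hat\zeta_1,\hat\zeta_2$, and absorb the finitely many initial steps and the approximation error into the constant $C$. This yields $\dist_{C^0}(\cR^n\zeta_1,\cR^n\zeta_2)\le C(1+\delta)^{-n}$ with $\delta=\delta(B)$, and then one takes $\delta$ to be the infimum over the relevant range of bounds (or simply notes the statement is for a fixed $\rho$ of bounded type, so $B$ is fixed).

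I expect the main obstacle to be the passage from the \emph{contraction on the analytic attractor} to the \emph{contraction for merely $C^3$ pairs}: one must control how close $\cR^n$ of a smooth pair is to $\cR^n$ of an analytic pair with the same rotation number, uniformly in $n$, and argue that the exponential rate is inherited. The clean way around this is to use that $\cR$ on the Banach manifold of analytic pairs is smooth (hyperbolicity gives an invariant splitting with exponential rates) together with the fact that the stable direction is codimension one and transverse to the ``change of rotation number,'' so that two pairs with identical rotation number are forced onto the same local stable set; the real bounds then supply the needed precompactness to make ``eventually analytic'' quantitative. A secondary technical point is checking that $\dist_{C^0}$ in the coordinates \eqref{corresp1} is uniformly comparable, over the compact family produced by the real bounds, to the distance in which hyperbolicity is stated — this is routine given Proposition \ref{realbounds}. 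Once these are in place, the estimate follows by iterating the contraction and collecting constants.
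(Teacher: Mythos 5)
There is a fundamental mismatch here: the paper does not prove \thmref{thmGuarino} at all. It is quoted as an external result of de~Melo and Guarino \cite{GdM} (``Rigidity of smooth circle maps''), and it is then used as an \emph{input} to the paper's hyperbolicity argument: \propref{local stable} is stated to follow ``directly from \thmref{thmGuarino} and compactness considerations,'' and this geometric convergence on the rotation-number fiber is part of what establishes that the stable set of $\zeta_*$ in \thmref{th:main1} contains all pairs with rotation number $\rho_*$. Your proposal runs the logic in the opposite direction: its first step is to ``invoke the renormalization-hyperbolicity theorem this paper sets out to prove'' to get uniform contraction along the fiber $\{\rho=\mathrm{const}\}$. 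Within this paper's architecture that is circular, and as a freestanding proof it assumes essentially the conclusion (exponential contraction of $\cR$ along classes of pairs with the same bounded-type rotation number), which is exactly what \cite{GdM} establishes.

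Even leaving the circularity aside, the second step is a genuine gap rather than a routine reduction. The hyperbolicity available in this paper (\thmref{th:main1}) concerns analytic pairs in a fixed Banach manifold $\bB^{D,E}$ and a periodic point $\zeta_*$ of $\cR$, with a local stable manifold; \thmref{thmGuarino} is a statement about arbitrary $C^3$-smooth pairs and arbitrary bounded-type irrational $\rho$, with a uniform $\delta$. Your plan to ``replace $\cR^N\zeta_j$ by nearby analytic pairs $\hat\zeta_j$ with the same rotation number, apply step one, and absorb the approximation error into the constant $C$'' does not work as stated: the real \emph{a priori} bounds and the convergence $\cR^n\zeta\to\cA$ of \thmref{existence of attractor} give only $C^0$-proximity of the tails to the attractor, not a uniform-in-$n$ comparison between the renormalization orbit of a $C^3$ pair and that of an analytic pair, and a single initial approximation error is not obviously contracted (a priori it could be amplified along the unstable direction, and keeping the rotation number fixed only pins one codimension, whose transversality to the error is exactly what would have to be proved). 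Controlling this smooth-versus-analytic discrepancy uniformly in $n$, and extracting a uniform geometric rate for merely $C^3$ data, is the hard analytic content of the cited de~Melo--Guarino theorem; your proposal names this as ``the main obstacle'' but does not supply an argument for it.
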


\subsection{Spaces of analytic almost commuting pairs}
Because of the commutation condition, there is no natural Banach manifold structure on the space of analytic commuting pairs.
 
However, there is one on the space of $C^r$-smooth commuting pairs with $r\geq 3$, considered modulo an affine conjugacy.
Indeed, pick the unique representative $\zeta=(\eta,\xi)$ of an affine conjugacy class, which is 
given by the normalization $\xi(0)=1$. Let $C^r([0,1])$ denote the Banach space of $C^r$-smooth functions on $[0,1]$ with the norm
$$||f||_{C^r}=\sum_{k=0}^r\sup_{x\in[0,1]}\left|\frac{d^k}{dx^k}f\right|.$$
As above, identify $C^r$-smooth commuting pairs with a subset of  $\RR\times C^r([0,1])\times C^r([0,1])$ via (\ref{corresp1}). It is possible to 
show that this subset has a submanifold structure. 
Clearly, the space of $C^r$-smooth commuting pairs is renormalization-invariant. However, it is 
an elementary exercise to show that the operator $\cR$ is {\it not} differentiable in the space of $C^r$-smooth pairs (indeed, composition,
considered as an operator $C^r\times C^r\to C^r$ is not differentiable). Thus the setting of $C^r$-smooth commuting pairs is equally unsuitable
for the study of the hyperbolic properties of $\cR$.

We, therefore, take a different path.
The principal  object in our approach to critical circle maps is the following space:
\begin{defn}
\label{def1}
The space $\bB$ consists of $C^3$-smooth commuting pairs $\zeta=(\eta,\xi)$,
such that the maps $\eta$,$\xi$ 
are complex-analytic on some neighborhoods of their intervals of definiton.
We call the elements of $\bB$ {\it analytic almost commuting pairs} or simply {\it almost commuting pairs}.

\end{defn}
A version of this ``classical'' approach was first used in the computer-assisted proof of renormalization hyperbolicity 
by Mestel \cite{Mes}, although, it has not received any further development in the literature since.

We claim that an equivalent way of describing this space is the following:
\begin{defn}
\label{def2}
The space $\bB$ consists of pairs of non-decreasing interval maps
$$\eta:[0,\xi(0)]\to[\eta(0),\eta\circ\xi(0)],\;\xi:[\eta(0),0]\to[\xi\circ\eta(0),\xi(0)] $$ 
which have the following properties:
\begin{enumerate}
\item  there exists an open neighborhood of the interval $[0,\xi(0)]$ on which the map $\eta$ is analytic, with a single critical point
of order $3$ at the origin;
\item similarly, there exists an open neighborhood of the interval $[\eta(0),0]$ on which the map $\xi$ is analytic, with a single critical 
point of order $3$ at the origin;
\item the commutator
$$[\eta,\xi](x)\equiv \eta\circ\xi(x)-\xi\circ\eta(x)=o(x^3)\text{ at }x=0.$$
\end{enumerate}
\end{defn}

It is evident that a pair satisfying Definition \ref{def1} also satisfies Definition \ref{def2}. To prove the converse, 
let $(\eta,\xi)$ be a pair satisfying \ref{def2}. Consider the extension of $\eta$ to a function $\tl\eta$ defined in a
 neighborhood of $0$, which is given by $\eta$ on $[0,\xi(0)]$ and by $\xi^{-1}\circ\eta\circ\xi$ on $[\eta(0),0]$.
Since $\xi$ is a local diffeomorphism away from the origin, we have
$$\eta(x)-\tl\eta(x)\sim \xi\circ\eta(x)-\eta\circ\xi(x)=o(x^3).$$
Hence, $\tl\eta$ is a $C^3$-smooth extension of $\eta$ to a neighborhood of $[0,\xi(0)]$, which commutes with the analytic extension of $\xi$, and the claim is proved.

Suppose, $B$ is a complex Banach space whose elements are functions of a complex variable. Let us say that
the {\it real slice} of $B$ is the real Banach space $B^\RR$ consisting of the real-symmetric elements of $B$.
If $X$ is a Banach manifold modelled on $B$ with the atlas $\{\Psi_\gamma\}$
we shall say that $X$ is {\it real-symmetric} if $\Psi_{\gamma_1}\circ\Psi_{\gamma_2}^{-1}(B^\RR)\subset B^\RR$ for any pair of indices $\gamma_1$, $\gamma_2$. The {\it real slice of $X$} is then defined as the real
Banach manifold $X^\RR\subset X$ given by $\Psi_\gamma^{-1}(B^\RR)$ in a local chart $\Psi_\gamma$.
An operator $A$ defined on a subset of $X$ is {\it real-symmetric} if $A(X^\RR)\subset X^\RR$. 

\begin{defn}
For a choice of topological disks $D\supset [0,1]$, $E$,
we let $\bB^{D,E}_0$ consists of pairs in $\bB$ whose maps $\eta$ and $\xi$ have bounded analytic continuations
 to $D$ and $E$ correspondingly, such that 
$[\eta(0),0]\subset E$, and such that $0$ is the only critical point of $\eta$ and $\xi$ on a neighborhood of $I_\eta$, $I_\xi$ respectively. We view it as a subset of the real slice of the complex Banach space $C^\omega(D)\times C^\omega(E)$ 
where $C^\omega(W)$ denotes
the space of bounded holomorphic functions on $W$ with the uniform norm. 
Finally, denote $\bB^{D,E}$ the space of pairs in $\bB^{D,E}_0$  with  further normalization conditions $\xi(0)=1$, and 
$\frac{1}{2C_0}<|\eta(0)|<2C_0,$ where $C_0$ is as in \propref{realbounds}.
\end{defn}

\begin{prop}
\label{Banach1}
With these norms, the space $\bB^{D,E}$ is a real Banach manifold, modeled on  a finite-codimensional subspace of the real slice of the Banach space
$C^\omega(D)\times C^\omega(E)$.
\end{prop}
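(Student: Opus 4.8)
The plan is to exhibit an explicit atlas on $\bB^{D,E}$ realizing it as a submanifold of the real slice of $C^\omega(D)\times C^\omega(E)$ cut out by finitely many analytic conditions, and then invoke the implicit function theorem. First I would handle the normalizations: the condition $\xi(0)=1$ is a single codimension-one linear constraint (evaluation at $0$ is a bounded linear functional on $C^\omega(E)$, hence on the product), and the open condition $\tfrac{1}{2C_0}<|\eta(0)|<2C_0$ cuts out an open subset, so it does not affect the manifold structure — it only restricts to an open piece. Thus it suffices to show that, near any base pair $\zeta_0=(\eta_0,\xi_0)\in\bB^{D,E}$, the set of pairs satisfying Definition \ref{def2} is locally the graph of an analytic map over a closed complemented subspace.

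The heart of the matter is the commutator condition (3) of Definition \ref{def2}, namely $[\eta,\xi](x)=\eta\circ\xi(x)-\xi\circ\eta(x)=o(x^3)$ at $x=0$. Write $\Phi(\eta,\xi)$ for the germ-jet at $0$ of $[\eta,\xi]$ up to order $3$; concretely, $\Phi(\eta,\xi)=\big([\eta,\xi](0),\,[\eta,\xi]'(0),\,[\eta,\xi]''(0),\,[\eta,\xi]'''(0)\big)\in\CC^4$ (or rather its real slice $\RR^4$). Condition (3) is precisely $\Phi(\eta,\xi)=0$. The key observation is that, because $\eta$ and $\xi$ are each required by (1)–(2) to have a critical point of order $3$ at the origin (so that $\eta(x)-\eta(0)=c_\eta x^3+O(x^4)$ and similarly for $\xi$), a short computation shows $[\eta,\xi](x)=O(x^3)$ automatically — the first three Taylor coefficients of the commutator vanish identically on the space of such pairs — so in fact $\Phi$ reduces to the single functional $(\eta,\xi)\mapsto [\eta,\xi]'''(0)$, one real constraint. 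Hence the total codimension imposed is finite: the order-$3$ critical point conditions on $\eta$ (namely $\eta'(0)=\eta''(0)=0$) and on $\xi$ (namely $\xi'(0)=\xi''(0)=0$), the normalization $\xi(0)=1$, and the single commutator condition $[\eta,\xi]'''(0)=0$. Each of these is the vanishing of a bounded (linear or analytic) functional on $C^\omega(D)\times C^\omega(E)$: evaluation of a derivative at an interior point is bounded by Cauchy estimates, and composition of bounded holomorphic maps with bounded image depends analytically on both factors (the map $(\eta,\xi)\mapsto\xi\circ\eta$ is analytic on the open set where $\eta(D)\Subset E$, again by Cauchy estimates on the difference quotients), so $\Phi$ is real-analytic.

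To conclude, I would check that the differential of the constraint map $\mathcal{C}=(\eta'(0),\eta''(0),\xi'(0),\xi''(0),\xi(0)-1,[\eta,\xi]'''(0))$ at $\zeta_0$ is surjective onto $\RR^6$; since each component is independently adjustable — perturb $\eta$ by adding a small multiple of $x$, of $x^2$, etc., localized away from affecting the other normalizations, and likewise for $\xi$, and for the commutator coefficient perturb, say, the cubic coefficient of $\eta$ — surjectivity is immediate, and the kernel is a closed subspace of finite codimension which is complemented (a finite-codimensional closed subspace of a Banach space is always complemented). The analytic implicit function theorem in Banach spaces then gives that $\{\mathcal{C}=0\}$ is, near $\zeta_0$, a real-analytic (in particular Banach) submanifold modeled on $\ker D\mathcal{C}(\zeta_0)$, a finite-codimensional subspace of the real slice of $C^\omega(D)\times C^\omega(E)$; the collection of such local graphs forms the atlas, and real-symmetry is clear because all the defining functionals are real-symmetric. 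The main obstacle — and the step deserving the most care — is verifying that the order-$3$ critical point conditions force the commutator to vanish to order $3$ automatically, so that only one extra scalar condition is genuinely imposed; this is the computation underlying the equivalence of Definitions \ref{def1} and \ref{def2} already sketched in the excerpt, and it is what keeps the codimension finite rather than infinite.
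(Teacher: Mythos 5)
Your overall strategy --- realizing $\bB^{D,E}$ as the zero locus of finitely many analytic functionals on the real slice of $C^\omega(D)\times C^\omega(E)$ and invoking the implicit function / regular value theorem --- is the same as the paper's, but there is a genuine error at exactly the step you flag as the delicate one: the claim that the order-$3$ critical points force $[\eta,\xi](x)=O(x^3)$, so that only the single constraint $[\eta,\xi]'''(0)=0$ remains. Writing $\eta(x)=\eta(0)+c_\eta x^3+O(x^4)$ and $\xi(x)=\xi(0)+c_\xi x^3+O(x^4)$, one finds
\[
[\eta,\xi](x)=\bigl(\eta(\xi(0))-\xi(\eta(0))\bigr)+\bigl(\eta'(\xi(0))\,c_\xi-\xi'(\eta(0))\,c_\eta\bigr)x^3+O(x^4),
\]
so while the first and second derivatives of the commutator at $0$ do vanish automatically (this is what the paper records as $(\eta\circ\xi)^{(n)}(0)=(\xi\circ\eta)^{(n)}(0)=0$ for $n=1,2$), the value $[\eta,\xi](0)=\eta(\xi(0))-\xi(\eta(0))$ does not: it is a nontrivial constraint (e.g.\ $\eta(x)=x^3+a$, $\xi(x)=x^3+b$ violate it for generic $a,b$). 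The computation after Definition~\ref{def2} that you cite \emph{assumes} the $o(x^3)$ condition; it does not derive it. Consequently your constraint map $\mathcal{C}$ is missing one equation, its zero set strictly contains $\bB^{D,E}$ (it contains pairs with $\eta(\xi(0))\neq\xi(\eta(0))$), so the manifold you construct is not the space in the statement, and your codimension count is off by one. This is precisely why the paper's map $\mathbf{F}$ carries the component $F_1=\eta(\xi(0))-\xi(\eta(0))$ alongside $F_2=(\eta\circ\xi)'''(0)-(\xi\circ\eta)'''(0)$ and $F_3=\xi(0)-1$.

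The repair is to add $[\eta,\xi](0)=0$ to your list and then re-verify surjectivity of the enlarged differential --- and here your ``immediate'' surjectivity argument also needs more care. Perturbing the cubic coefficient of $\eta$ changes both $\eta'''(0)$ and $\eta'(\xi(0))$, so its effect on $[\eta,\xi]'''(0)=\eta'(\xi(0))\xi'''(0)-\xi'(\eta(0))\eta'''(0)$ is proportional to $3\xi(0)^2\xi'''(0)-6\xi'(\eta(0))$, which can vanish at particular pairs; a single hand-picked perturbation direction is therefore not guaranteed to work. The paper deals with this by using the coefficients of $x^k$ and $x^{k+1}$ in $\eta$, together with $\xi(0)$, as transverse coordinates and choosing $k\geq 3$ so that the relevant determinant, essentially $\xi'''(0)-3k(k-1)\xi'(\eta(0))$, is nonzero in a neighborhood of the given pair. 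With the missing zeroth-order commutation condition restored and a nondegeneracy argument of this kind in place of the ``independently adjustable'' claim, your proof aligns with the paper's.
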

\begin{proof}Firstly, note that the conditions $\eta'(0)=\eta''(0)=\xi'(0)=\xi''(0)=0$ define a Banach subspace of $C^\omega(D)\times C^\omega(E)$. Furthermore, by the Argument Principle and considerations of continuity, the conditions
  \begin{itemize}
    \item $\frac{1}{2C_0}<|\eta(0)|<2C_0$,
\item   $\eta'''(0)\neq 0$, $\xi'''(0)\neq 0$,
\item   $\eta'(x)>0$ and $\xi'(x)>0$ on $I_\eta\setminus\{0\}$ and $I_\xi\setminus\{0\}$ respectively, and
  \item $\eta'(z)\neq 0$, $\xi'(z)\neq 0$ for $z\neq 0$ on neighborhoods of $I_\eta$, $I_\xi$  respectively,
\end{itemize}
    define an open subset $\cW$ of this Banach subspace. 

    The conditions $\eta'(0)=\eta''(0)=\xi'(0)=\xi''(0)=0$  imply that $$(\eta \circ \xi)^{(n)}(0)=(\xi \circ \eta)^{(n)}(0)=0\text{ for }n=1,2.$$
    Thus, the space $\bB^{D,E}$ is the preimage $\bF^{-1}(0)\subset \cW$, where the map $\bF:\cW\to\RR^3$ is given by
    $$\bF=(F_1,F_2,F_3)\equiv (\eta\circ\xi(0)-\xi\circ\eta(0),(\eta\circ\xi)'''(0)-(\xi\circ\eta)'''(0),\xi(0)-1).$$
Let $k\geq 3$, and write 
$$\eta(x)={\mathbf u}_k x^k + {\mathbf u}_{k+1} x^{k+1} + q(x),\; \xi(x)={\mathbf v}_0+s(x),\text{ where }{\mathbf v}_0 \equiv \xi(0),\; {\mathbf u}_k \equiv \eta^{(k)}(0)/k!.$$
Note that a tuple
$$({\mathbf u}_k,{\mathbf u}_{k+1},{\mathbf v}_0,q(x),s(x))$$
forms a set of analytic coordinates in the real slice of $C^\omega(D)\times C^\omega(E)$.
In these coordinates,
\begin{eqnarray}
\nonumber F_1({\mathbf u}_k, {\mathbf u}_{k+1}, {\mathbf v}_0; q,s) &\equiv& \eta(\xi(0))-\xi(\eta(0))={\mathbf u}_k {\mathbf v}_0^k + {\mathbf u}_{k+1} {\mathbf v}_0^{k+1} +q({\mathbf v}_0)-{\mathbf v}_0-s({\mathbf u}_0), \\
\nonumber F_2({\mathbf u}_k, {\mathbf u}_{k+1}, {\mathbf v}_0; q,s) &\equiv&  (\eta \circ \xi)'''(0) - (\xi \circ \eta)'''(0) = \eta'({\mathbf v}_0) \xi'''(0)-\xi'({\mathbf u}_0) \eta'''(0)= \\
\nonumber &=& (k {\mathbf u}_k {\mathbf v}_0^{k-1}+(k+1) {\mathbf u}_{k+1} {\mathbf v}_0^k + q'({\mathbf v}_0)) \xi'''(0) - \\
\nonumber &\phantom{=}&\xi'({\mathbf u}_0) (k(k-1)(k-2) {\mathbf u}_k {\mathbf v}_0^{k-3}+(k+1)(k-1)k {\mathbf u}_{k+1} {\mathbf v}_0^{k-2} +q'''({\mathbf v}_0))\\
\nonumber F_3({\mathbf u}_k, {\mathbf u}_{k+1},{\mathbf v}_0;q,s) &\equiv& {\mathbf v}_0-1.
\end{eqnarray}

We have that 
\begin{eqnarray}
\nonumber D_{{\mathbf u}_k,{\mathbf u}_{k+1},{\mathbf v}_0} {\bf F} &\equiv&  \left[ \begin{array}{ccc}
\frac{\partial F_1}{\partial {\mathbf u}_k}  &  \frac{\partial F_1}{\partial {\mathbf u}_{k+1}}  &  \frac{\partial F_1}{\partial {\mathbf v}_0}\\  \frac{\partial F_2}{\partial {\mathbf u}_k}  &  \frac{\partial F_2}{\partial {\mathbf u}_{k+1}}  &  \frac{\partial F_2 }{\partial {\mathbf v}_0} \\  \frac{\partial F_3}{\partial {\mathbf u}_k}  &  \frac{\partial F_3}{\partial {\mathbf u}_{k+1}}  &  \frac{\partial F_3}{\partial {\mathbf v}_0}\end{array}\right]=\left[ 
\begin{array}{c c c} 
{\mathbf v}_0^k & {\mathbf v}_0^{k+1} & \cdots\\
   {k {\mathbf v}_0^{k-1} \xi'''(0) - \atop \xi'({\mathbf u}_0) k(k-1)(k-2) {\mathbf v}_0^{k-3}}  & {(k+1) {\mathbf v}_0^k  \xi'''(0)- \atop \xi'({\mathbf u}_0)(k+1)(k-1)k {\mathbf v}_0^{k-2} } & \cdots\\
   0 & 0 & 1
\end{array}
\right] 
\\[6pt]
\nonumber &\implies& \operatorname{det}(D_{{\mathbf u}_k,{\mathbf u}_{k+1},{\mathbf v}_0} {\bf F}({\mathbf u}_k,{\mathbf u}_{k+1},1;q,s))=\xi'''(0)-3 k (k-1) \xi'({\mathbf u}_0).
\end{eqnarray}
Let $\zeta_0=(\eta_0,\xi_0)\in\bB^{D,E}$. Then there exists a neighborhood $\cU(\zeta_0)\subset\cW$ in which $|\xi'''(0)|$ is bounded from above and $\xi'(\eta_0)$ is bounded away from zero. Hence, there exists $k\geq 3$ such that in $\cU(\zeta_0)$ the above determinant is non-zero. By Regular Value Theorem this implies the desired result.


\end{proof}

We will denote ${\frak B}^{D,E}$ the complex Banach manifolds
of pairs defined in the same way as $\bB^{D,E}$, but without the condition of real symmetry, so that
$$\bB^{D,E}=({\frak B}^{D,E})^\RR.$$

Our first statement is:
\begin{prop}
\label{invariance1}
The space $\bB$ is renormalization invariant: let $\zeta\in \bB$ and $\rho(\zeta)\neq 0$.
Then $\cR(\zeta)\in \bB$. Moreover, let $\rho(\zeta)\notin \QQ$. Then 
$$\cR^n(\zeta)\to\cA$$
at a geometric rate, where $\cA$ is the hyperbolic horseshoe attractor of renormalization constructed in Theorem~\ref{existence of attractor}.

\end{prop}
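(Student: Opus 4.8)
The plan is to treat the two assertions separately. For the invariance, let $\zeta=(\eta,\xi)\in\bB$ with $\rho(\zeta)\neq0$, so $r=\chi(\zeta)<\infty$ and $\cR\zeta=(\widetilde{\eta^{r}\circ\xi}\,|\,\widetilde{I_{\xi}},\ \widetilde\eta\,|\,\widetilde{[0,\eta^{r}(\xi(0))]})$. That this is again a $C^{3}$-smooth commuting pair with cubic critical points at the origin is standard and recalled in the text; moreover it commutes exactly, since $(\eta^{r}\circ\xi)\circ\eta=\eta^{r}\circ\xi\circ\eta=\eta^{r}\circ\eta\circ\xi=\eta\circ(\eta^{r}\circ\xi)$. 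It remains to produce complex neighborhoods of holomorphy. The component $\widetilde\eta(z)=\lambda^{-1}\eta(\lambda z)$ is an affine rescaling of $\eta$, hence holomorphic on $\lambda^{-1}$ times a neighborhood of $I_{\eta}\supset[0,\eta^{r}(\xi(0))]$, with the cubic critical point preserved at $0$. For the component $\widetilde{\eta^{r}\circ\xi}$ (assume $r\geq1$; for $r=0$ one has $\eta^{r}\circ\xi=\xi$ and the claim is immediate), note that by the definition of the height the points $\xi(0),\eta(\xi(0)),\dots,\eta^{r}(\xi(0))$ all lie in $I_{\eta}=[0,\xi(0)]$, and $\xi(I_{\xi})=[\eta(\xi(0)),\xi(0)]\subset I_{\eta}$; since $I_{\eta}$ is compactly contained in the domain of holomorphy of $\eta$, a sufficiently thin complex neighborhood of $I_{\xi}$ is carried by $\xi$ and then by $r$ successive applications of $\eta$ without leaving these domains, so $\eta^{r}\circ\xi$, and after rescaling $\widetilde{\eta^{r}\circ\xi}$, is holomorphic near $\widetilde{I_{\xi}}$ with a cubic critical point at $0$. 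Hence $\cR\zeta\in\bB$, and when $\rho(\zeta)\notin\QQ$ all iterates $\cR^{n}\zeta$ are defined and lie in $\bB$ by induction.

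For the convergence, fix $\zeta\in\bB$ with $\rho(\zeta)\notin\QQ$. Theorem~\ref{existence of attractor}, applied to the analytic commuting pair $\zeta$, already gives $\cR^{n}\zeta\to\cA$ uniformly on compact sets for the analytic extensions. To see that this happens in the Banach topology of $\bB$, combine the real \emph{a priori} bounds of Proposition~\ref{realbounds}, applied with $S$ the closure of the (irrational rotation number) renormalization orbit of $\zeta$, which is precompact since successive renormalizations of a $C^{3}$ pair form a precompact family, with the complex bounds underlying the horseshoe construction of \cite{Ya2,Ya4}: for all large $n$ the maps of $\cR^{n}\zeta$, normalized so that $\xi(0)=1$, extend holomorphically with uniformly bounded norm to a fixed pair of domains $D\supset[0,1]$, $E$, so that $\cR^{n}\zeta\in\bB^{D,E}$ — the constraint $\tfrac{1}{2C_{0}}<|\eta(0)|<2C_{0}$ being precisely the $C_{0}$-commensurability of the two adjacent intervals of $\cR^{n}\zeta$ supplied by Proposition~\ref{realbounds} — and the same bounds hold along $\cA$. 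On slightly smaller domains, uniform-on-compacts convergence together with these uniform bounds upgrades, via the Cauchy estimates, to convergence in the norm of $\bB^{D,E}$; hence $\cR^{n}\zeta\to\cA$ in $\bB$.

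For the geometric rate, use the conjugacy $i\colon\cA\to\bar\Sigma$ of Theorem~\ref{existence of attractor} to choose $\widehat\zeta\in\cA$ with $\rho(\widehat\zeta)=\rho(\zeta)$ — for instance extend the continued-fraction string $[r_{0},r_{1},\dots]$ of $\rho(\zeta)$ to a bi-infinite sequence and put $\widehat\zeta=i^{-1}$ of it — so that $\cR^{n}\widehat\zeta\in\cA$ and $\rho(\cR^{n}\zeta)=\rho(\cR^{n}\widehat\zeta)$ for every $n$. If $\rho(\zeta)$ is of bounded type, Theorem~\ref{thmGuarino} gives $\dist_{C^{0}}(\cR^{n}\zeta,\cR^{n}\widehat\zeta)<C(1+\delta)^{-n}$, hence $\dist_{C^{0}}(\cR^{n}\zeta,\cA)<C(1+\delta)^{-n}$; since both families are holomorphic with uniform bounds on fixed domains for large $n$, the Cauchy estimates convert $C^{0}$-closeness on the real interval into $C^{\omega}$-closeness on a slightly smaller pair of domains, changing the rate only by a multiplicative constant, so $\cR^{n}\zeta\to\cA$ geometrically in $\bB$. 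For a general irrational rotation number, convergence $\cR^{n}\zeta\to\cA$ is as above, and once $\cR^{n}\zeta$ has entered a fixed neighborhood of the compact hyperbolic set $\cA$ the geometric rate follows from the uniform per-step contraction toward $\cA$ coming from its hyperbolicity (see \cite{Ya2,Ya4}), or equivalently from the general rigidity theory of critical circle maps.

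I expect the load-bearing step to be the promotion from ``uniformly on compact sets'', which is all that Theorems~\ref{existence of attractor} and~\ref{thmGuarino} literally deliver, to convergence in the Banach topology of $\bB$ with the geometric rate preserved: this rests on the uniform complex \emph{a priori} bounds ensuring that the domains of holomorphy of $\cR^{n}\zeta$ do not shrink as $n\to\infty$, which is exactly what makes the Cauchy estimates applicable. Everything else is either recalled in the text (that $\cR$ of a commuting pair is a commuting pair; the real \emph{a priori} bounds) or is elementary monotonicity bookkeeping.
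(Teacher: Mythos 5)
Your invariance argument is fine and is essentially what the paper means when it says that ``preservation of the other properties of pairs in $\bB$ is evident from the definition of $\cR$'': the real orbit $\xi(0),\eta(\xi(0)),\dots,\eta^{r}(\xi(0))$ stays in $I_\eta$, so a thin complex neighborhood of $I_\xi$ is carried through the composition, and the $C^3$ commutation passes to $\cR\zeta$. The divergence, and the gap, is in the convergence half. First, you apply Theorem~\ref{existence of attractor} to ``the analytic commuting pair $\zeta$'', but an element of $\bB$ is in general \emph{not} an analytic commuting pair: only its $C^3$ extensions commute, while its analytic extensions need not (this is the whole point of the almost-commuting formalism). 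The paper's own (one-line) proof instead draws both the invariance and the geometric convergence from the theory of $C^3$-smooth commuting pairs, citing \cite{dF1,dF2,dFdM1}, and only afterwards brings in analyticity via the complex bounds, as in \thmref{invariance2}.

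Second, and more seriously, your geometric rate is only justified for rotation numbers of bounded type, where Theorem~\ref{thmGuarino} applies. For unbounded type you appeal to ``uniform per-step contraction toward $\cA$ coming from its hyperbolicity (see \cite{Ya2,Ya4})'', but \cite{Ya2,Ya4} construct the horseshoe and prove convergence, not hyperbolicity; hyperbolicity was obtained in \cite{Ya3} via cylinder renormalization, and re-proving it in the commuting-pair setting is precisely the goal of the present paper (and even here only at periodic points $\zeta_*$). So that step is either a miscitation or circular, and in any case hyperbolicity of $\cA$ alone does not yield a uniform geometric approach rate for an orbit with arbitrary irrational combinatorics without some stable-set or shadowing argument; the claim as stated for all $\rho(\zeta)\notin\QQ$ is not covered by your proof. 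A smaller, fixable point: Cauchy estimates do not upgrade closeness on the real interval to closeness on a complex neighborhood; with the uniform complex bounds in hand the correct tool is the two-constants (Hadamard three-domains) theorem, which does preserve a geometric rate up to a change of exponent.
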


\begin{proof}
The space of $C^3$-smooth commuting pairs is $\cR$-invariant, and the geometric convergence statement holds on this space
(see \cite{dF1,dF2,dFdM1}). Preservation of the other properties of pairs in  $\bB$ is evident from the definition of $\cR$.
\end{proof}

\subsection{Complex {\it a priori} bounds}
\begin{defn}
For $0<\mu<1$ and $K>1$ let us denote $\bH(\mu,K)$ the set of almost commuting pairs with the following properties:
\begin{itemize}
\item there exist topological disks $U$, $V$ and $\Delta$ which contain the origin and such that $U$ and $V$ are compactly contained 
in $\Delta$ and 
$$\eta:U\to (\Delta\setminus \RR)\cup \eta(U\cap \RR)\text{ and }\xi:V\to (\Delta\setminus \RR)\cup \xi(V\cap \RR)$$
are three-fold branched coverings;
\item let $A$ be the maximal annulus separating $\CC\setminus \bar\Delta$ from $U\cup V$. Then $\mod A>\mu$;
\item  $\xi(0)=1$ and  $\mu<\eta(0)<1/\mu$;
\item $[0,1]\subset U$ and $[\eta(0),0]\subset V$;
\item $\diam(\Delta)<1/\mu$ and $\Delta$ is a $K$-quasidisk.
\end{itemize}
\end{defn}

\begin{lem}[Lemma 2.15 \cite{Ya2}]
\label{bounds compactness}
For each $\mu>0$ the space $\hol(\mu,K)$ is sequentially pre-compact in the Carath{\'e}odory topology, with every limit point
contained in $\hol(\mu/2,2K)$.
\end{lem}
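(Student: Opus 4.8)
The plan is to extract a Carath\'eodory-convergent subsequence, using compactness of the relevant families of uniformly bounded $K$-quasidisks together with a normal-families argument for the maps, and then to verify that the uniform modulus bound rules out any degeneration in the limit, so that the limit again lies in a space of the same form with the geometric constants relaxed by a factor of $2$.

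First I would fix a sequence $\zeta_n=(\eta_n,\xi_n)\in\hol(\mu,K)$ with associated topological disks $U_n,V_n,\Delta_n$ and maximal separating annuli $A_n$. The normalizations $\xi_n(0)=1$, $\mu<\eta_n(0)<1/\mu$, $\diam(\Delta_n)<1/\mu$, together with $[0,1]\subset U_n$, $[\eta_n(0),0]\subset V_n$ and $U_n\cup V_n$ compactly contained in $\Delta_n$, show that all three families of domains consist of disks of diameter less than $1/\mu$ containing $0$, that the $\Delta_n$ are $K$-quasidisks, and --- since $U_n\cup V_n$ is separated from $\CC\setminus\bar\Delta_n$ by an annulus of modulus exceeding $\mu$ --- that $U_n$ and $V_n$ each contain a neighbourhood of $[0,1]$, resp. $[\eta_n(0),0]$, of a definite size depending only on $\mu$. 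Passing to a subsequence, I would arrange $\Delta_n\to\Delta$, $U_n\to U$, $V_n\to V$ in the Carath\'eodory topology (pointed at $0$), where $\Delta$ is a topological disk of diameter at most $1/\mu$. Since $\eta_n(U_n)\subset\bar\Delta_n$ has diameter less than $1/\mu$, the family $\{\eta_n\}$ is uniformly bounded on compact subsets of $U$, so by Montel's theorem I can pass to a further subsequence with $\eta_n\to\eta$ locally uniformly on $U$ and $\xi_n\to\xi$ locally uniformly on $V$.

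Next I would check that $(\eta,\xi)\in\hol(\mu/2,2K)$. The relaxation of the constants is exactly what is needed to push the closed conditions to their possibly-boundary limiting values: $\eta(0)=\lim\eta_n(0)\in[\mu,1/\mu]\subset(\mu/2,2/\mu)$; $\diam(\Delta)\le 1/\mu<2/\mu$; by semicontinuity of the extremal separating modulus under Carath\'eodory convergence the maximal annulus $A$ separating $\CC\setminus\bar\Delta$ from $U\cup V$ satisfies $\mod A\ge\mu>\mu/2$; and $\Delta$, a Carath\'eodory limit of uniformly bounded $K$-quasidisks with a definite interior buffer around $U\cup V$, is a $2K$-quasidisk, in particular a Jordan domain. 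The conditions $\xi(0)=1$, $[0,1]\subset U$, $[\eta(0),0]\subset V$ survive because they hold for each $\zeta_n$ with the uniform room recorded above. The substantive point is that $\eta$ and $\xi$ remain three-fold branched coverings onto $(\Delta\setminus\RR)\cup\eta(U\cap\RR)$ and $(\Delta\setminus\RR)\cup\xi(V\cap\RR)$: for $w$ in the interior of the limiting image, the modulus bound forces all solutions of $\eta_n=w$ into a fixed compact subset of $U$, so one can choose a cycle $\gamma\subset U$ enclosing them for all large $n$, and then
$$\frac{1}{2\pi i}\oint_\gamma\frac{\eta'(z)\,dz}{\eta(z)-w}=\lim_{n\to\infty}\frac{1}{2\pi i}\oint_\gamma\frac{\eta_n'(z)\,dz}{\eta_n(z)-w}=3$$
by uniform convergence; this shows $\eta$ is a nonconstant proper map of degree $3$ onto its image (Hurwitz's theorem identifying the image), and similarly for $\xi$. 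Since $\eta_n'(0)=\eta_n''(0)=0$ for each $n$ --- part of being an almost commuting pair in $\bB$ --- the same holds for $\eta$, so $0$ is a critical point of local degree at least $3$; as the global degree is $3$, the origin is the unique critical point, has local degree exactly $3$, and $\eta'''(0)\ne 0$, and likewise for $\xi$. Finally, the commutator relation $\eta\circ\xi-\xi\circ\eta=o(x^3)$ is preserved because the Taylor coefficients of $\eta_n\circ\xi_n-\xi_n\circ\eta_n$ at $0$ up to order $3$ converge to those of $\eta\circ\xi-\xi\circ\eta$ and all of them vanish. Hence $(\eta,\xi)\in\hol(\mu/2,2K)$.

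The step I expect to be the main obstacle is the degree-preservation argument for the limiting branched covers: one has to rule out that part of the covering mass escapes to $\partial U$ or $\partial V$ in the limit (which would drop the degree below $3$, or even make a limit map constant), and it is precisely the uniform bound $\mod A>\mu$ that supplies the a priori compactness of the preimages needed to run the argument-principle computation above and to keep the limiting $\Delta$ nondegenerate. Everything else is routine semicontinuity together with the standard compactness of families of uniformly bounded $K$-quasidisks containing a marked point.
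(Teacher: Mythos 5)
The paper itself offers no proof of this lemma --- it is imported directly from \cite{Ya2} --- so your argument has to stand on its own, and as written it has a genuine gap. The gap is the claim, made at the outset and then relied on twice, that the bound $\mod A_n>\mu$ for the annulus separating $\CC\setminus\bar\Delta_n$ from $U_n\cup V_n$ implies that $U_n$ and $V_n$ contain neighbourhoods of $[0,1]$ and $[\eta_n(0),0]$ of a definite size depending only on $\mu$. The annulus $A_n$ lies in $\Delta_n\setminus(U_n\cup V_n)$, i.e.\ \emph{outside} $U_n\cup V_n$: its modulus bounds from below the gap between $U_n\cup V_n$ and $\partial\Delta_n$ (so it does give a definite collar of $\Delta_n$ around the real traces), but it says nothing about the inner thickness of $U_n$ or $V_n$ themselves, and none of the conditions in the definition of $\bH(\mu,K)$ as stated visibly forbids $U_n$ from collapsing onto the segment $[0,1]$ along a subsequence. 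This is not cosmetic: your whole architecture rests on it. Without a uniform inradius of $U_n$, $V_n$ at the marked point $0$, the Carath\'eodory kernels may degenerate, so the extraction of limits $U$, $V$ with $[0,1]\subset U$ and $[\eta(0),0]\subset V$ is unjustified, Montel ``on compact subsets of $U$'' yields nothing, and the later assertion that ``the modulus bound forces all solutions of $\eta_n=w$ into a fixed compact subset of $U$'' fails for the same reason: the preimages lie in $U_n$, but nothing you have quoted keeps them away from $\partial U_n$.

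What is missing is an argument that the three-fold covering structure itself prevents collapse --- for instance, exploiting that $\eta_n$ admits univalent inverse branches on the components of $\Delta_n\setminus\RR$ adjacent to the image of the real trace (these \emph{do} have definite size, by the modulus bound applied on the correct side), and then controlling where the three preimage components sit and how much the branches can contract, so as to produce a definite neighbourhood of the real traces inside $U_n$ and $V_n$. Note that the statement here is an adaptation of Lemma 2.15 of \cite{Ya2}, where the space of holomorphic commuting pairs is defined with further geometric restrictions on the domains of the pair; whichever route you take, the non-collapse must be derived from the conditions actually listed in the present definition of $\bH(\mu,K)$, not from the separating modulus alone. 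The remaining ingredients of your proposal --- compactness of uniformly bounded $K$-quasidisks with a marked point, semicontinuity of the separating modulus, the Hurwitz/argument-principle identification of the limit as a degree-three branched cover with its unique critical point at the origin, and persistence of the order-three vanishing of the commutator --- are the right ones and would go through once that non-collapse is established.
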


\begin{thm}
\label{complex bounds}There exists universal constants $\mu>0$ and $K>1$ such that
the following holds.
 Let $S\subset \bB$ be a compact subset. Then there exists $N=N(S)$ such that for every almost commuting pair
 $\zeta\in S$ which is  $n\geq N$ times renormalizable,
the renormalization
$$\zeta_n=R^n\zeta\in \bH(\mu,K).$$
Furthermore, there exists a universal $R>1$ such that the range $\Delta_n$ of $\zeta_n$ can be chosen as 
$\Delta_n=D_R(0).$
\end{thm}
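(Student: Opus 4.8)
\textbf{Proof proposal for Theorem \ref{complex bounds}.}

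The plan is to derive the complex bounds for almost commuting pairs from the already-established complex bounds for analytic commuting pairs together with the real \emph{a priori} bounds (Proposition \ref{realbounds}) and the renormalization invariance and convergence statement (Proposition \ref{invariance1}). The point is that an almost commuting pair $\zeta$ need not satisfy $\eta\circ\xi=\xi\circ\eta$ exactly, only to third order; but its renormalizations converge geometrically to the horseshoe attractor $\cA$, which consists of genuine analytic commuting pairs, and the complex bounds are an open condition in the Carath\'eodory topology by Lemma \ref{bounds compactness}.

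First I would recall from \cite{Ya2,Ya4} that the horseshoe attractor $\cA$ consists of analytic commuting pairs for which complex \emph{a priori} bounds hold: every $\zeta\in\cA$ admits a three-fold branched-covering extension of the type in the definition of $\bH(\mu_0,K_0)$ for some universal $\mu_0>0$, $K_0>1$, with range a round disk $D_{R_0}(0)$ (after the normalization $\xi(0)=1$; here one uses that on $\cA$ the quantity $\eta(0)$ is bounded away from $0$ and $\infty$ by the real bounds, so the rescaling is uniformly controlled). Since $\cA$ is compact in the Carath\'eodory topology and $\bH(\mu_0/2,2K_0)$ contains all limit points of $\bH(\mu_0,K_0)$ by Lemma \ref{bounds compactness}, there is a Carath\'eodory-neighborhood $\cN$ of $\cA$ such that every almost commuting pair lying in $\cN$ (and with the normalization and bounds on $\eta(0)$ imposed in the definition of $\bH$) actually belongs to $\bH(\mu,K)$ with $\mu=\mu_0/2$, $K=2K_0$, and whose range can be taken to be $D_R(0)$ with $R=2R_0$, say; this is where I would fix the universal constants $\mu,K,R$. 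The mild point here is to check that the branched-covering and quasidisk conditions are genuinely Carath\'eodory-open, which follows because they are phrased in terms of moduli of annuli and of uniformly-bounded univalent-type maps, all of which vary continuously under Carath\'eodory convergence of the domains and uniform convergence of the maps.

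Next, given a compact $S\subset\bB$, Proposition \ref{invariance1} gives $\cR^n\zeta\to\cA$ at a geometric rate, uniformly in $\zeta\in S$ because $S$ is compact and the convergence is locally uniform (one covers $S$ by finitely many charts and uses the uniform-on-compacts convergence of Theorem \ref{existence of attractor} together with compactness of $S$). Hence there is $N=N(S)$ such that for all $n\ge N$ and all $n$-times-renormalizable $\zeta\in S$ the pair $\cR^n\zeta$ lies in the neighborhood $\cN$ of $\cA$ fixed above; the real bounds of Proposition \ref{realbounds}, applied with the same compact set, guarantee (enlarging $N$ if necessary) that the normalization conditions $\xi_n(0)=1$ and $\mu<\eta_n(0)<1/\mu$ hold for the renormalized pair, so that $\cR^n\zeta\in\bH(\mu,K)$ and its range is $D_R(0)$. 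This is exactly the assertion of the theorem.

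The main obstacle is the first step: transferring the complex \emph{a priori} bounds, which in the classical theory are proven for \emph{genuine} commuting pairs, to almost commuting pairs whose renormalizations land near — but not on — the attractor. The resolution is precisely the openness of the complex-bounds condition in the Carath\'eodory topology (Lemma \ref{bounds compactness}), which lets a neighborhood of $\cA$ inherit slightly weaker but still universal bounds; some care is needed to confirm that all five clauses in the definition of $\bH(\mu,K)$ — existence of the disks $U,V,\Delta$, the modulus estimate $\mod A>\mu$, the normalizations, the inclusions $[0,1]\subset U$, $[\eta(0),0]\subset V$, and the quasidisk bound on $\Delta$ — are simultaneously preserved in such a neighborhood, and that the round-disk choice $\Delta_n=D_R(0)$ survives (which it does, since having a definite-modulus annulus around $U\cup V$ inside $\Delta$ lets one replace $\Delta$ by a concentric round disk of controlled radius while keeping a, possibly smaller but still universal, modulus). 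Once this openness is in hand, the geometric convergence to $\cA$ does the rest.
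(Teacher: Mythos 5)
Your argument has a genuine gap at its central step, and it is essentially circular. You want to place $\cR^n\zeta$ in a Carath\'eodory-neighborhood $\cN$ of the attractor $\cA$ and then invoke openness of the conditions defining $\bH(\mu,K)$. But the convergence $\cR^n\zeta\to\cA$ available \emph{before} complex bounds are established (Proposition \ref{invariance1}, which simply quotes the convergence theory for $C^3$-smooth commuting pairs) is convergence in the real $C^0$/$C^3$ sense on the interval, not convergence of analytic extensions on definite complex domains. To know that the rescaled renormalizations $\cR^n\zeta$ admit analytic extensions to complex neighborhoods of a \emph{uniform} size — which is what being Carath\'eodory-close to $\cA$, or lying in any set where Lemma \ref{bounds compactness} applies, requires — is precisely the content of the complex \emph{a priori} bounds you are trying to prove: a priori the domains of analyticity of the renormalized pairs could shrink relative to the rescaled intervals. (Theorem \ref{existence of attractor}, with its uniform-on-compact-sets convergence of analytic extensions, is itself proved using complex bounds, and in any case is stated for genuine analytic commuting pairs with irrational rotation number, not for almost commuting pairs.) A secondary issue: the theorem covers pairs that are only finitely renormalizable (rational rotation number), for which the convergence-to-$\cA$ mechanism is not even available, whereas the statement demands $\cR^n\zeta\in\bH(\mu,K)$ for every $n\geq N$ up to the last defined renormalization.

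The paper's route is different and avoids all of this: it observes that the existing proofs of complex bounds for analytic commuting pairs (\cite{Ya1}, adapted in \cite{dFdM2} outside the Epstein class) are direct dynamical arguments that use commutativity only at order zero, i.e.\ only the identity $\eta\circ\xi(0)=\xi\circ\eta(0)$, which an almost commuting pair satisfies since $[\eta,\xi](x)=o(x^3)$ at $x=0$. Hence those proofs apply verbatim to pairs in $\bB$, giving the universal $\mu$, $K$ and the round range $D_R(0)$ with no transfer-from-the-attractor argument needed. If you want to salvage your approach, you would first need an independent argument producing uniform complex domains for the renormalizations of almost commuting pairs — at which point you would essentially be reproving the classical complex bounds anyway.
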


\realfig{domain}{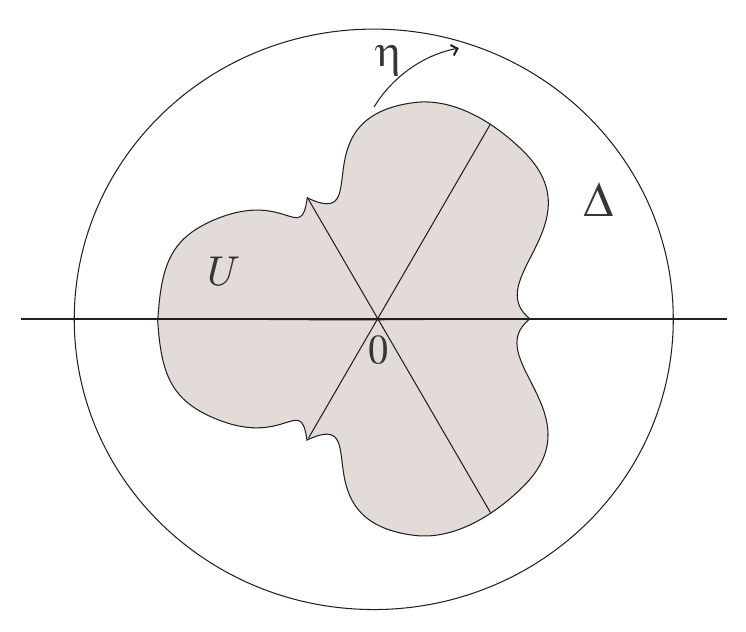}{An extension of $\eta$ as a $3$-fold branched covering map $U\to\Delta$. The preimage of the real line is indicated.}{10cm}

The proof of this theorem was first given by the second author in \cite{Ya1} for $C^\omega$-commuting pairs in the Epstein class, and 
was later adapted in \cite{dFdM2} for $C^\omega$-commuting pairs without the Epstein property. However, these arguments do not
use commutativity of the pair beyond order zero (i.e. $\eta\circ\xi(0)=\xi\circ\eta(0)$). Hence, the theorem holds in the above generality.

\noindent

We conclude this section with the following statement
which is an immediate consequence of \thmref{complex bounds} and the compactness statement of \lemref{bounds compactness}:
\begin{thm}
\label{invariance2}
There exists a space
$\bB^{D,E}$ and $m\in\NN$ such that the following holds. Let $\zeta\in \bB^{D,E}$ be an $m$-times renormalizable almost commuting pair.
 There exist larger domains $D'\Supset D$, and $E'\Supset E$ so that
$$\cR^m(\zeta)\in  \bB^{D',E'}.$$
\end{thm}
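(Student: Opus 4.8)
The plan is to deduce \thmref{invariance2} from \thmref{complex bounds} combined with \lemref{bounds compactness} and the already-established manifold structure of \propref{Banach1}. First I would fix a starting pair of domains: pick any $\zeta_0\in\bB$ with irrational rotation number, take a small compact neighborhood $S$ of $\zeta_0$ inside $\bB$ in the $C^\omega(D_0)\times C^\omega(E_0)$-topology for some initial disks $D_0\supset[0,1]$ and $E_0\supset[\eta(0),0]$, and apply \thmref{complex bounds} to $S$ to obtain the constants $\mu$, $K$, the integer $m=N(S)$, and the universal radius $R>1$. The key point is that every $\zeta\in S$ which is at least $m$ times renormalizable has $\cR^m\zeta\in\bH(\mu,K)$ with range domain $\Delta_m=D_R(0)$; in particular the maps $\eta_m$ and $\xi_m$ of $\cR^m\zeta$ are honest three-fold branched coverings onto $(D_R(0)\setminus\RR)\cup(\text{real image})$, defined and bounded on the topological disks $U$ and $V$ provided by the definition of $\bH(\mu,K)$.

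Next I would use the compactness built into \lemref{bounds compactness}: the images $\cR^m(S\cap\{m\text{-renormalizable}\})$ lie in $\bH(\mu,K)$, whose closure in the Carath\'eodory topology is contained in $\bH(\mu/2,2K)$. The uniform bound $\mod A>\mu$ on the separating annulus $A$ together with $\diam(\Delta_m)<1/\mu$ and the $K$-quasidisk condition forces a \emph{uniform} lower bound on the size of the domains $U,V$ of the renormalized maps: there are fixed disks $D'\supset[0,1]$ and $E'\supset[\eta_m(0),0]$ — explicitly one can take round disks of a definite radius around the respective intervals, shrunk slightly from $D_R(0)$ using the modulus bound and Koebe-type distortion — such that every $\cR^m\zeta$ (for $\zeta\in S$ that is $m$-times renormalizable) extends analytically and boundedly to $D'$ and $E'$ respectively, with uniformly bounded norm. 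That is precisely the statement $\cR^m(\zeta)\in\bB^{D',E'}$, once one checks the normalization $\xi(0)=1$ (built into $\bH(\mu,K)$) and $\frac{1}{2C_0}<|\eta(0)|<2C_0$, which follows from $\mu<\eta(0)<1/\mu$ after possibly adjusting $\mu$ against the constant $C_0$ of \propref{realbounds}. Finally I would define the domain $\bB^{D,E}$ in the statement by taking $D\Subset D'$ and $E\Subset E'$ small enough that $S$ contains a neighborhood of $\zeta_0$ in $\bB^{D,E}$; since $\bB^{D,E}\hookrightarrow\bB$ continuously and $S$ was a $\bB$-neighborhood of $\zeta_0$, shrinking $D,E$ (hence enlarging the ambient norm and restricting to a smaller set) lands us inside $S$, and the desired conclusion $D'\Supset D$, $E'\Supset E$ holds by construction.

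The main obstacle I expect is the passage from the qualitative "branched covering onto a quasidisk with a modulus bound" description of $\bH(\mu,K)$ to a genuine fixed pair of \emph{round} (or at least fixed) disks $D',E'$ on which the renormalized maps are analytic with uniformly bounded sup-norm — in other words, extracting $\bB^{D',E'}$-membership from $\bH(\mu,K)$-membership. This requires a quantitative argument: the modulus bound $\mod A>\mu$ guarantees that $U\cup V$ is contained well inside $\Delta_m=D_R(0)$ with a definite "collar", so that a disk of radius depending only on $\mu,K,R$ around each of the intervals $[0,1]$ and $[\eta_m(0),0]$ is contained in $U$, respectively $V$; then $\|\eta_m\|_{C^\omega(D')}\le\diam(\Delta_m)<1/\mu$ and similarly for $\xi_m$, giving the uniform bound. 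One must also be slightly careful that the branched-covering structure does not obstruct taking such a disk — but since $[0,1]\subset U$ and $0$ is the unique critical point, a sufficiently small definite neighborhood of $[0,1]$ still lies in $U$. Everything else (real symmetry, the cubic critical point conditions, the commutator condition to order three) is preserved automatically by $\cR$ as already noted in the proof of \propref{invariance1}, so no further work is needed there.
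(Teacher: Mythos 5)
Your first two paragraphs are essentially the paper's first step: from the modulus bound, the quasidisk condition and $\diam(\Delta)<1/\mu$ one extracts fixed domains $D'\supset[0,1]$, $E'\supset[\eta(0),0]$ with $\bH(\mu,K)\subset\bB^{D',E'}$ (the paper simply quotes \lemref{bounds compactness} for this; your quantitative collar argument is a reasonable way to see it). The gap is in the last step, where you must make the renormalization depth $m$ uniform over the \emph{whole} space $\bB^{D,E}$. You apply \thmref{complex bounds} only to a small compact neighborhood $S$ of one chosen pair $\zeta_0$, so your $m=N(S)$ works only for pairs in $S$; the theorem, however, quantifies over every $m$-times renormalizable $\zeta\in\bB^{D,E}$. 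Your attempted repair --- ``shrinking $D,E$ \dots lands us inside $S$'' --- has the inclusion backwards: making the domains smaller \emph{weakens} the analyticity requirement, so the space gets larger (restriction gives $\bB^{D',E'}\subset\bB^{D,E}$ when $D\subset D'$, $E\subset E'$), and $\bB^{D,E}$ is never contained in a small neighborhood of a single pair. At best your argument proves the conclusion for pairs near $\zeta_0$, which is a strictly weaker statement.

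The missing idea, which is the actual content of the paper's proof, is that once $D\Subset D'$ and $E\Subset E'$ are fixed, the set $\bB^{D,E}$ is \emph{itself} compact in $\bB$ in the $C^3$-metric on the real line: the normalizations $\xi(0)=1$ and $\frac{1}{2C_0}<|\eta(0)|<2C_0$ bound the real traces, and the Koebe Distortion Theorem (normal families on the fixed domains $D,E$) gives precompactness with $C^3$-convergence on the intervals of definition. One may therefore apply \thmref{complex bounds} with $S=\bB^{D,E}$, obtaining a single $N$ valid for every pair in the space, and take $m\geq N$; then $\cR^m\zeta\in\bH(\mu,K)\subset\bB^{D',E'}$ for every $m$-times renormalizable $\zeta\in\bB^{D,E}$. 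Note also the logical order: $D',E'$ are determined first by $\bH(\mu,K)$, then $D,E$ are chosen compactly inside them, and only then is $m$ produced from the compactness of $\bB^{D,E}$ --- there is no need ever to localize near a particular $\zeta_0$, and no step of your argument that does so can be salvaged without the compactness of the full space.
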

\begin{proof}
Let $0<\mu<1$, $K>1$ be as in \thmref{complex bounds}. By  \lemref{bounds compactness}, there exist $\bB^{D',E'}$ such that $\hol(\mu,K)\subset \bB^{D',E'}$. Let us fix $\bB^{D,E}$ so that $D'\Supset D$, and $E'\Supset E$. 
By Koebe Distortion Theorem, the set $\bB^{D,E}$ is compact in $\bB$ in the $C^3$-metric on the real line. This implies that the constant $N$ in \thmref{complex bounds}
can be chosen uniformly in $\bB^{D,E}$. To complete the proof, let $m\geq N$.
\end{proof}

\section{Hyperbolicity of renormalization in one dimension}
This section is devoted to proving the following theorem:
\begin{thm} \label{th:main1}
Let us fix a periodic point $\zeta_*\in\cA$ of $\cR$ of period $k$ and let $\rho_*=\rho(\zeta_*)$.  There exists a space
$\bB^{D,E}$ and $p=m\cdot k\in\NN$ such that the following holds.  The pair $\zeta_*$ is a  fixed point of $\cR^p$ in the space $\bB^{D,E}$. 
The image
$$\cR^p(\zeta_*)\in  \bB^{D',E'} \text{ where }D'\Supset D,\;E'\Supset E.$$
The linearization
$$\cL\equiv D\cR^p|_{\zeta_*}$$
in  $\bB^{D,E}$ is a compact operator with one simple unstable eigenvalue, 
and the rest of the spectrum is compactly contained in $\DD$.
The stable manifold $\cW^s(\zeta_*)$ of $\zeta_*$ contains all pairs in $\bB^{D,E}$ with the rotation number $\rho_*$.
\end{thm}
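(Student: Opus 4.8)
The plan is to combine the known hyperbolicity of the renormalization horseshoe attractor $\cA$ in the sense of \cite{Ya2,Ya4} with the Banach-manifold structure established in \propref{Banach1} and the invariance statement of \thmref{invariance2}. First I would fix a periodic point $\zeta_*\in\cA$ of period $k$; by \propref{invariance1} and \thmref{complex bounds} its forward renormalization orbit has complex \emph{a priori} bounds, so by \thmref{invariance2} there is a pair of domains $D,E$ (and larger $D'\Supset D$, $E'\Supset E$) and an integer $m$ such that $\cR^{m}$ maps $\bB^{D,E}$ into $\bB^{D',E'}$; taking $p=mk$ makes $\zeta_*$ a genuine fixed point of $\cR^{p}$ acting between these spaces. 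The key structural input is that $\cR^{p}$, written in the analytic coordinates of \propref{Banach1}, is an \emph{analytic} operator: this is exactly the payoff of working with $\bB$ rather than the $C^r$ spaces, since $\cR$ is built from finitely many compositions, inversions, and an affine rescaling of bounded holomorphic functions, and on the nested domains $D\Supset D'$ each such operation is analytic (composition of a holomorphic map with a holomorphic function whose image stays compactly inside the target domain is analytic, and inversion of a univalent map likewise). The inclusion $C^\omega(D')\hookrightarrow C^\omega(D)$ being compact (Montel / Cauchy estimates) then forces $\cL=D\cR^{p}|_{\zeta_*}$ to be a compact operator on the model Banach space.

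Next I would identify the spectrum of $\cL$. The abstract horseshoe theory gives that $\cR$ is hyperbolic at $\cA$ with a one-dimensional unstable direction and the complementary contraction; concretely, the results of \cite{Ya4} (and \cite{dF2,dFdM2}) show that near a periodic point the renormalization has a codimension-one stable set and a single expanding eigenvalue. I would transport that information to the present setting: the codimension-one stable manifold $\cW^s(\zeta_*)$ in $\bB^{D,E}$ is (by \thmref{thmGuarino}, or the earlier convergence statements in \cite{GdM,dFdM1}) precisely the set of pairs with rotation number $\rho_*$ — indeed Theorem~\ref{thmGuarino} gives that any two $C^3$ pairs with the same bounded-type rotation number have exponentially converging renormalizations, so all pairs of rotation number $\rho_*$ lie in one local stable set, and conversely a pair in the stable set of $\zeta_*$ must have $\cR^{n}\zeta\to$ a single point of $\cA$, hence rotation number $\rho_*$ by \thmref{existence of attractor}. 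Since $\cL$ is compact its spectrum is a sequence accumulating only at $0$; the existence of the stable manifold of codimension $1$ pins the unstable spectrum to a single simple eigenvalue outside $\overline\DD$, and the invariance of $\cW^s(\zeta_*)$ together with the contraction on it (Theorem~\ref{thmGuarino}) shows the rest of the spectrum lies in a disk of radius $<1$; simplicity follows because a two-dimensional unstable space would produce a stable set of codimension $\ge 2$, contradicting that all $\rho_*$-pairs (a codimension-one condition) are stable.

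I would then assemble the statement: $\zeta_*$ is a fixed point of $\cR^{p}$ in $\bB^{D,E}$ with $\cR^{p}(\zeta_*)\in\bB^{D',E'}$ (this is \thmref{invariance2} applied along the periodic orbit); $\cL=D\cR^{p}|_{\zeta_*}$ is compact with one simple eigenvalue of modulus $>1$ and spectral radius of the restriction to the complement $<1$; and $\cW^s(\zeta_*)$ contains every pair in $\bB^{D,E}$ of rotation number $\rho_*$. The local stable/unstable manifold theorem for a $C^1$ (here analytic) map with hyperbolic fixed point on a Banach manifold gives a local stable manifold tangent to the contracting subspace; globalizing it via $\cR^{-p}$ and using the dynamical characterization of rotation number ($\rho$ is preserved by $\cR$ and is a complete conjugacy invariant of the $\cR$-orbit on $\cA$) upgrades ``local'' to ``all $\rho_*$-pairs in $\bB^{D,E}$''.

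The main obstacle I expect is proving that $\cR^{p}$ is genuinely analytic (not merely continuous) as an operator $\bB^{D,E}\to\bB^{D',E'}$, and in particular that its derivative is compact. The subtlety is that the \emph{height} $\chi$ (the number of times one iterates $\eta$ in the definition of $\cR$) is locally constant on $\bB^{D,E}$ but jumps across certain codimension-one walls, so one must first check that $\chi$ is constant on a neighborhood of the periodic orbit — which follows because $\zeta_*$ has irrational rotation number, so each height is finite and stable under perturbation — and then verify that on that neighborhood $\cR^{p}$ is a composition of the analytic building blocks above, with the crucial point that after renormalization the images land in a strictly smaller domain, giving the compactifying inclusion. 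Handling the affine rescaling by $\lambda=-1/|I_\eta|$ analytically (the dependence of $|I_\eta|$ on $\zeta$ is analytic and bounded away from $0$ by \propref{realbounds}) and checking that the normalization conditions $\xi(0)=1$, $\tfrac{1}{2C_0}<|\eta(0)|<2C_0$ can be restored by an analytic change of coordinates is the remaining technical bookkeeping, but the domain-shrinking mechanism is what makes the whole scheme go through and is where I would concentrate the argument.
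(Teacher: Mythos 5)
Your treatment of compactness is fine and matches the paper: \thmref{invariance2} (domain shrinkage plus Montel) is exactly how $\cL$ is shown to be compact, and your use of \thmref{thmGuarino} to identify the stable set with the $\rho_*$-pairs is also in the spirit of \propref{local stable}. The genuine gap is in the spectral step. You propose to ``transport'' from \cite{Ya4} (and \cite{dF2,dFdM2}) the statement that near a periodic point renormalization has a codimension-one stable set and a single expanding eigenvalue. No such statement exists in those references: they give the horseshoe as a topological conjugacy with the shift together with convergence of renormalizations, and in the spaces used there the operator is not even differentiable (the space of analytic commuting pairs is not a Banach manifold, and $\cR$ is not differentiable on $C^r$ pairs). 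Producing the eigenvalue structure of $D\cR^p$ on $\bB^{D,E}$ is precisely the new content of this theorem, so citing hyperbolicity of the horseshoe is circular. (The hyperbolicity proved in \cite{Ya3} lives on a different Banach manifold, via cylinder renormalization, and transporting its spectrum to $\bB^{D,E}$ would itself require an argument the paper deliberately avoids.) The paper instead manufactures the unstable eigenvalue by hand: it defines the cone field $\cC_\zeta$, shows it is nonempty (\propref{pr:non-empty}), shows rotation number is monotone along cone directions (\propref{rho-monotone}), and proves the expansion estimate $\|D\cR^{2k}\bar v\|\geq C\eps(1+\delta)^k$ using the real {\it a priori} bounds (\propref{expcone1}); this is what guarantees at least one eigenvalue outside the closed unit disk.

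Your bound on the rest of the spectrum also has a gap in two places. First, you assert that ``all pairs with rotation number $\rho_*$'' is a codimension-one condition; since $\rho$ is only a continuous function on $\bB^{D,E}$, this is not automatic and is the content of \thmref{stable manifold}, whose proof requires the auxiliary rational-rotation-number submanifolds $\cD_k$ (defined by the algebraic condition that $0$ is periodic, hence genuine codimension-one submanifolds by the Implicit Function Theorem), \lemref{cone not in tk} (the tangent spaces $T_\gamma\cD_k$ miss the cone), and a Hahn--Banach/Alaoglu limiting argument producing the hyperplane over which $\cD_\rho$ is a $C^0$-graph. Second, ``contraction on the stable set'' does not by itself exclude eigenvalues on the unit circle: the stable set is only a $C^0$-graph, not an invariant manifold tangent to a spectral subspace, so geometric convergence on it does not control the spectrum of $\cL$ on the complement of the unstable eigenvector. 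The paper rules out neutral spectrum separately (\propref{hyperb1}) by a Center Manifold argument: a unit-modulus eigenvalue would yield a finite-dimensional center manifold transverse both to $\cD_\rho$ and to the cone $\cC_\zeta$, which is impossible; only then does \propref{hyperb2} combine the cone expansion (at least one unstable eigenvalue) with $\operatorname{codim}W^s(\zeta_*)<2$ (at most one) to finish. These missing constructions — the expanding cone field, the graph structure of $\cD_\rho$, and the exclusion of neutral eigenvalues — are the core of the paper's proof and are not recoverable from the literature you cite.
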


\noindent
Let $\zeta \in \cW^s(\zeta_*)$ and  consider its $n$-th pre-renormalization $\zeta_n=(\zeta^{\bar s_n},\zeta^{\bar t_n})$ defined on linear rescalings $D_n$ and $E_n$ of the sets $D$ and $E$ correspondingly. Consider the collection of topological disks
$$\cV_n\equiv \{\zeta^{\bar w}(D_n)\text{ for all }\bar w\prec \bar s_n\text{ and }\zeta^{\bar w}(E_n)\text{ for all }\bar w\prec \bar t_n\}.$$
We will refer to this collection of sets the {\it $n$-th complex dynamical partition} of $\zeta$. It is clear from the  construction that the elements  $\zeta^{\bar w}(I_n)$ and $\zeta^{\bar w}(J_n)$ of the dynamical partition $\cP_n$ are contained in the elements $\zeta^{\bar w}(D_n)$ and $\zeta^{\bar w}(E_n)$, repectively, of the complex dynamical partition $\cV_n$.
Set $\lambda_n=(-1)^n|I_n|$ so that 
$$\cR^n\zeta(z)=\lambda_n^{-1} p\cR^n\zeta(\lambda_n z).$$

As a consequence of Theorem $\ref{th:main1}$ we have the following: 

\begin{cor} \label{cor:partition}
Let $\zeta_*$  be  as in Theorem $\ref{th:main1}$. Let $\zeta \in \cW^s(\zeta_*)$. Then there exists $N=N(\zeta)$, $C>0$, $C'>0$, $K>0$  and $0<\gamma<1$ so that for every $n>N$  the following holds.

\begin{itemize}
\item[$1)$] If $Q_n \in \cV_n$ then $\diam(Q_n)< C \gamma^n$.
\item[$2)$] Any two neighboring domains $Q_n, Q'_n \in \cV_n$ are $K$-commensurate.
\item[$3)$] For every $\bar w \prec \bar s_n$ (or $\bar w \prec \bar t_n$)  set $\psi_{\bar w}^\zeta=\zeta^{\bar w} \lambda_n$. Then $\|D \psi_{\bar w}^\zeta |_D \|_\infty<\gamma^n$ ($\|D \psi_{\bar w}^\zeta |_E \|_\infty<\gamma^n$, respectively).

\end{itemize}
\end{cor}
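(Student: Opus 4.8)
The plan is to derive everything from the hyperbolicity statement of \thmref{th:main1} together with the complex \emph{a priori} bounds and the Koebe distortion theorem. First I would record the consequence of \thmref{th:main1} that we really use: since $\zeta\in\cW^s(\zeta_*)$, the sequence $\cR^n\zeta$ converges to $\zeta_*$ at a geometric rate in $\bB^{D,E}$, and moreover all sufficiently deep renormalizations $\cR^n\zeta$ lie in $\bH(\mu,K)$ with range $\Delta_n=D_R(0)$ by \thmref{complex bounds}. In particular there is a uniform constant bounding the $C^3$-size (on the real line) and the complex-analytic size of $\cR^n\zeta$, hence by Koebe there are uniform distortion bounds for the maps $\eta_n,\xi_n$ and all their iterates on their domains of definition. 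This is the reservoir of uniform geometry I will exploit.

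Next I would prove item (1). The point $Q_n=\zeta^{\bar w}(D_n)$ (or $\zeta^{\bar w}(E_n)$) is, by construction, the image of a domain of bounded geometry (a linear rescaling $D_n=\lambda_n D$ of the fixed disk $D$, where $\lambda_n=(-1)^n|I_n|$) under a composition $\zeta^{\bar w}$ which is a sub-word of $\zeta^{\bar s_n}=\eta_n$ or $\zeta^{\bar t_n}=\xi_n$. Since $\bar w\prec\bar s_n$, the branch $\zeta^{\bar w}$ is a \emph{diffeomorphism} (it does not pass through the critical point before the last step, which is precisely what $\bar w\prec\bar s_n$ encodes), and it extends analytically and univalently to a definite complex neighborhood because $\cR^n\zeta\in\bH(\mu,K)$ provides a definite annulus $A$ with $\mod A>\mu$ around $U_n\cup V_n$. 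Koebe then gives $\diam Q_n\asymp \|D\zeta^{\bar w}\|\cdot\diam D_n$, and the real \emph{a priori} bounds (\propref{realbounds}) imply $\diam Q_n\asymp |\zeta^{\bar w}(I_n)|$, an interval of the $n$-th dynamical partition $\cP_n$. Now I invoke the standard exponential shrinking of dynamical partitions of critical circle maps with bounded combinatorics — which here follows directly from \thmref{thmGuarino}, or alternatively from the geometric convergence $\cR^n\zeta\to\zeta_*$ combined with the fact that the partition of $\zeta_*$ contracts at a fixed geometric rate $\gamma_*<1$ (periodicity of $\zeta_*$). This yields $\diam Q_n<C\gamma^n$ for some $C>0$, $\gamma\in(\gamma_*,1)$.

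For item (2), neighboring elements $Q_n,Q'_n$ of $\cV_n$ are images under a common branch $\zeta^{\bar w}$ of either two adjacent rescaled fundamental domains, or of $D_n$ and $E_n$ themselves (which are $C_0$-commensurable by the last clause of \propref{realbounds} together with the normalization $\tfrac1{2C_0}<|\eta(0)|<2C_0$). Since $\zeta^{\bar w}$ has bounded distortion on a definite neighborhood containing both, commensurability of the sources passes to commensurability of the images, with $K$ depending only on $C_0$ and the Koebe constant. For item (3), $\psi^\zeta_{\bar w}=\zeta^{\bar w}\circ\lambda_n$ maps the fixed disk $D$ (resp.\ $E$) into $Q_n$, so $\|D\psi^\zeta_{\bar w}|_D\|_\infty\asymp \diam Q_n/\diam D\le \mathrm{const}\cdot\gamma^n$, and after relabeling $\gamma$ (e.g.\ replacing $\gamma$ by $\sqrt\gamma$ and absorbing constants into a larger $N$) one gets $\|D\psi^\zeta_{\bar w}|_D\|_\infty<\gamma^n$ for $n>N$, as claimed; this uses the Cauchy estimate to pass from the diameter bound on $Q_n$ to a sup-norm bound on the derivative on the slightly smaller domain, which is legitimate because $\zeta^{\bar w}$ extends univalently past $D_n$, $E_n$.

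The main obstacle, I expect, is not any single estimate but the bookkeeping needed to justify that every branch $\zeta^{\bar w}$ with $\bar w\prec\bar s_n$ really does extend to a \emph{definite} complex neighborhood of its real domain with \emph{uniformly} bounded distortion. One has to track how the definite moduli supplied by $\cR^n\zeta\in\bH(\mu,K)$ for the \emph{top-level} maps $\eta_n,\xi_n$ propagate down to all intermediate compositions in the complex dynamical partition — i.e.\ one needs a pull-back argument showing that preimages of the definite range $D_R(0)$ under the relevant branches contain definite neighborhoods of the corresponding partition intervals. This is the analogue, in the complex setting, of the classical construction of the complex dynamical partition for critical circle maps, and it is where the full strength of \thmref{complex bounds} (uniform $\mu$, $K$, $R$ over the compact family $\bB^{D,E}$, itself compact by Koebe as in the proof of \thmref{invariance2}) is essential. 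Once that is in place, items (1)–(3) are immediate corollaries of Koebe plus the exponential contraction of the real partition guaranteed by \thmref{th:main1} (via \thmref{thmGuarino} or direct geometric convergence to the periodic $\zeta_*$).
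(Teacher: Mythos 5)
Your argument follows essentially the same route as the paper's (very terse) proof: use Theorem \ref{th:main1} to place all sufficiently deep renormalizations $\cR^n\zeta$ in a space of pairs analytic on definite larger domains, apply the Koebe Distortion Theorem to get uniformly bounded distortion of the branches $\zeta^{\pm\bar w}$, and combine this with the real \emph{a priori} bounds and the geometric convergence to the periodic point $\zeta_*$ to obtain the exponential rate $\gamma^n$; the technical point you flag about propagating definite complex neighborhoods to the intermediate branches is exactly what the paper subsumes in its one-line appeal to Koebe. The proposal is correct and matches the paper's approach, only spelled out in more detail.
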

\begin{proof}
By \thmref{th:main1}, there exists $N>0$ and a pair of domains $\hat D\Supset D$ and $\hat E\Supset E$  such that for all $n\geq N$ the 
maps of the pair $\cR^n\zeta\in\bB^{\hat D,\hat E}.$
By Koebe Distortion Theorem, this implies that for all $\bar w \prec \bar s_n$ (or $\bar w \prec \bar t_n$) the branches $\zeta^{-\bar w}$ have 
bounded distortion. The claims readily follow.
\end{proof}

\subsection{Expansion of renormalization}

In this section we will describe the expanding direction  of renormalization. For the remainder of this chapter, let us fix the domains
$D$, and $E$ as in \thmref{invariance2}. 

\subsection*{Definition of the expanding cone field}
We begin by defining a subset $\cC$ in the tangent bundle  $\bT\equiv T\bB^{D,E}$ as follows. Let $\bar v(x)\in\bT_\zeta$ for some
renormalizable pair $\zeta$. 
Let $\zeta$ be a twice renormalizable pair, and recall that $p\cR^2\zeta$ denotes the second pre-renormalization (the non-rescaled iterate)
of $\zeta$. 
Denote 
$$\cC_\zeta=\{\bar v\in\bT\;|\; \inf_x\nabla_{\bar v}p\cR^2\zeta>0\text{ for all }x\in I_2\cup J_2\},$$
(where $\nabla_{\bar v}$ denotes the directional derivative in the direction of $\bar v$) and set $\cC=\cup\cC_\zeta$ over all twice-renormalizable pairs $\zeta\in\bB^{D,E}$.

\begin{prop}
\label{properties cone}
For every twice-renormalizable $\zeta$,  the set $\cC_\zeta$ is an open cone in $\bT_\zeta$.
\end{prop}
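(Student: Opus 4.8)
The plan is to realize $\cC_\zeta$ as the preimage of an evidently open cone under a bounded linear operator, after which both assertions become formal.

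First I would linearize the second pre-renormalization at $\zeta$. Write $p\cR^2\zeta=(\eta_2,\xi_2)$ with $\eta_2=\zeta^{\bar s_2}|_{I_2}$ and $\xi_2=\zeta^{\bar t_2}|_{J_2}$, where $\bar s_2,\bar t_2\in\cI$ are the multi-indices read off from the heights $\chi(\zeta)$ and $\chi(\cR\zeta)$. For these fixed multi-indices, the assignments $\zeta'\mapsto(\zeta')^{\bar s_2}$ and $\zeta'\mapsto(\zeta')^{\bar t_2}$ are given by a fixed finite composition of the two maps of the pair. Since we work in the space of \emph{analytic} almost commuting pairs, on which composition of bounded holomorphic functions (in the uniform norm, with the image of the inner map compactly inside the domain of the outer) is an analytic operation --- this being precisely the motivation for Definition~\ref{def1} --- these assignments are analytic maps from a neighbourhood of $\zeta$ in $\bB^{D,E}$ into $C^\omega(\hat D_2)$ and $C^\omega(\hat E_2)$ for suitable topological disks $\hat D_2\supset I_2$, $\hat E_2\supset J_2$; such disks exist because $I_2\subset[0,1]\subset D$, $J_2\subset[\eta(0),0]\subset E$, and all intermediate images along the composition stay compactly inside $D$ or $E$ for pairs near $\zeta$, by compactness and continuity. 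Consequently
\[
L\colon\ \bT_\zeta\longrightarrow \mathcal F\equiv C^\omega(\hat D_2)\times C^\omega(\hat E_2),\qquad L\bar v\equiv\nabla_{\bar v}p\cR^2\zeta,
\]
is a bounded linear operator (real symmetry is preserved throughout, so its image lies in the real slice), and for each $\bar v$ the pair $\nabla_{\bar v}p\cR^2\zeta$ consists of functions analytic, in particular continuous, on the compact set $I_2\cup J_2$.

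Next I would identify the target cone. Directly from the definition of $\cC_\zeta$,
\[
\cC_\zeta=L^{-1}(\mathcal P),\qquad \mathcal P\equiv\bigl\{(g,h)\in\mathcal F\ :\ \inf_{I_2}g>0\ \text{ and }\ \inf_{J_2}h>0\bigr\},
\]
the infima being attained and positive precisely when $g,h$ are positive everywhere on these compacta. Then $\mathcal P$ is a convex cone, since $\inf_{I_2}(tg)=t\inf_{I_2}g$ for $t>0$ and the infimum of a convex combination dominates the convex combination of the infima; and $\mathcal P$ is open, because the functional $(g,h)\mapsto\min(\inf_{I_2}g,\inf_{J_2}h)$ is $1$-Lipschitz for the supremum norm over $I_2\cup J_2$, which is in turn dominated by the norm of $\mathcal F$, so $\mathcal P$ is the preimage of the open set $(0,\infty)$ under a continuous functional. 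Since $L$ is linear and continuous, $\cC_\zeta=L^{-1}(\mathcal P)$ is an open convex cone; quantitatively, if $\bar v_0\in\cC_\zeta$ and $m>0$ is the smaller of the two infima for $L\bar v_0$, then $\|\bar v-\bar v_0\|<m/\|L\|$ forces $\|L\bar v-L\bar v_0\|_{\mathcal F}<m$ and hence keeps both infima strictly positive, so $\bar v\in\cC_\zeta$.

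The only step carrying real content --- and the one I would treat most carefully --- is the boundedness (indeed the existence) of the linear operator $L$: this is exactly where the choice of the analytic category enters, through the smoothness of the composition operator on spaces of bounded holomorphic functions with the uniform norm. Everything after that is soft, and in particular I expect no genuine obstacle beyond bookkeeping the domains $\hat D_2,\hat E_2$.
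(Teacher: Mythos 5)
Your argument is correct, and in fact the paper states Proposition~\ref{properties cone} without any proof, treating it as evident. Your formalization --- writing $\cC_\zeta=L^{-1}(\mathcal{P})$, where $L\bar v=\nabla_{\bar v}p\cR^2\zeta$ is the differential (with the combinatorics $\bar s_2,\bar t_2$ of $\zeta$ held fixed) of the composition operator, bounded because composition of bounded holomorphic maps with compactly contained intermediate images is analytic and its derivative is given by (\ref{eqn-comp}), and where $\mathcal{P}$ is the open convex cone of pairs uniformly positive on $I_2\cup J_2$ --- is exactly the intended soft argument, and the one substantive point you single out (existence and boundedness of $L$, i.e.\ the compact containment of the intermediate images in $D$ and $E$ for pairs near $\zeta$) is handled correctly.
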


We next prove:
\begin{prop}
\label{rho-monotone}
Let $\zeta(t):(0,1)\to\bB^{D,E}$ be a smooth curve with the property 
$$\frac{d}{dt}\bar \zeta(t)\in \cC_{\zeta(t)}\text{ for all }t.$$
Then the function
$$\rho(t)\equiv \rho(\zeta(t))$$
is non-decreasing. Furthermore, if $\rho(t_0)\notin \QQ$ then $\rho(t)$ is strictly increasing at $t_0$.
\end{prop}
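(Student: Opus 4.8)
The plan is to deduce the statement from the classical monotone dependence of the rotation number of a circle homeomorphism on the map, applied not to $\zeta(t)$ directly but to its \emph{second} renormalization, which is exactly where the cone field lives. Observe first that the hypothesis $\frac{d}{dt}\bar\zeta(t)\in\cC_{\zeta(t)}$ forces every $\zeta(t)$ to be at least twice renormalizable, since $\cC_{\zeta(t)}$ is only defined for such pairs.

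\emph{Step 1: $\sigma(t):=\rho(\cR^2\zeta(t))$ is non-decreasing, and strictly increasing at any $t_0$ with $\sigma(t_0)\notin\QQ$.} Since the rotation number of a commuting pair is invariant under affine rescaling, $\sigma(t)=\rho(p\cR^2\zeta(t))$, which by \propref{rotation number} equals the rotation number of the circle homeomorphism $F_{p\cR^2\zeta(t)}$ assembled from the pre-renormalized pair $p\cR^2\zeta(t)=(\eta_2,\xi_2)$. Unwinding the definition of the cone, the condition $\frac{d}{dt}\bar\zeta(t)\in\cC_{\zeta(t)}$ says exactly that $\frac{d}{dt}\eta_2>0$ on $I_2$ and $\frac{d}{dt}\xi_2>0$ on $J_2$; since moreover $\eta_2'\ge 0$ and $\xi_2'\ge 0$, the two branches $\eta_2$ and $\eta_2\circ\xi_2$ making up $F_{p\cR^2\zeta(t)}$ also have everywhere strictly positive $t$-derivative, i.e.\ $t\mapsto p\cR^2\zeta(t)$ is strictly increasing in the natural pointwise partial order on commuting pairs. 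Invoking the classical monotone dependence of rotation number on the map (see e.g.\ Herman, or de Melo--van Strien) — after the routine localization in $t$ needed to absorb the motion with $t$ of the domains $I_2,J_2$ and of the circle model carrying $F_{p\cR^2\zeta(t)}$ — together with continuity of $\sigma$, we obtain that $\sigma$ is non-decreasing; and where $\sigma(t_0)\notin\QQ$, the strict form of the same result (using that $F_{p\cR^2\zeta(t_0)}$ is then conjugate to an irrational rotation, hence carries a unique, fully supported invariant measure against which the everywhere-positive infinitesimal displacement integrates positively) gives that $\sigma$ is strictly increasing at $t_0$.

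\emph{Step 2: transfer to $\rho(t):=\rho(\zeta(t))$.} Here the use of the second renormalization is essential: one has $\sigma=G^2\circ\rho$, where $G(x)=\{1/x\}$ is the Gauss map, and $G$ reverses orientation on each of its branches while $G^2$ preserves it. Concretely, writing $r_0(t)=\chi(\zeta(t))$, $r_1(t)=\chi(\cR\zeta(t))$, one has $\rho(t)=[r_0(t),r_1(t),\sigma(t)]$, and $s\mapsto[a,b,s]$ is for each fixed $(a,b)$ an increasing homeomorphism of $[0,1]$ onto an interval. At any $t_0$ with $\rho(t_0)\notin\QQ$ the integers $r_0,r_1$ are constant on a neighborhood of $t_0$ — each changes only across parameters where $\rho$, resp.\ $G\circ\rho$, passes through some $1/n$, i.e.\ through a rational value of $\rho$ — so near $t_0$ we have $\rho(t)=[r_0(t_0),r_1(t_0),\sigma(t)]$; hence by Step 1, noting $\sigma(t_0)=G^2(\rho(t_0))\notin\QQ$, the function $\rho$ is non-decreasing near $t_0$ and strictly increasing at $t_0$.

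\emph{Step 3: global monotonicity, and the main obstacle.} Recall $\rho$ is continuous. Suppose $\rho(t_1)>\rho(t_2)$ for some $t_1<t_2$; let $s^*\in[t_1,t_2]$ minimize $\rho$, so $s^*>t_1$ and $(\rho(s^*),\rho(t_1))$ is non-degenerate. By pigeonhole over the countably many rank-$2$ continued-fraction cylinders, one of them contains two irrationals $c<c'$ lying in $(\rho(s^*),\rho(t_1))$. By the intermediate value theorem on $[t_1,s^*]$ and a ``last visit'' argument one produces $s_2<s_1$ in $[t_1,s^*]$ with $\rho(s_2)=c'$ and $\rho(s_1)=c$; since $c,c'$ share the same first two continued-fraction entries $(a,b)$, at $s_1$ and $s_2$ we have the same $(r_0,r_1)=(a,b)$, so $\sigma(s_i)=G^2(\rho(s_i))$ with $G^2$ restricted to that cylinder an increasing homeomorphism, giving $\sigma(s_1)=G^2(c)<G^2(c')=\sigma(s_2)$ while $s_1>s_2$ — contradicting Step 1. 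Hence $\rho$ is non-decreasing on $(0,1)$, which together with Step 2 proves the proposition. I expect the real work to be in Step 1: reducing the cone condition to a bona fide monotone-family statement for circle homeomorphisms — which is precisely why $p\cR^2$ rather than $p\cR$ appears, the rescaling $\lambda=-1/|I_\eta|$ reversing orientation once and $\cR^2$ restoring it — and importing the classical strict monotonicity of $\rho$ at irrational values; Steps 2 and 3 are then continued-fraction bookkeeping.
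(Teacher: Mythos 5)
Your Steps 2 and 3 are reasonable continued-fraction bookkeeping, but the argument stands or falls with Step 1, and there you have not actually supplied the key step --- as you yourself concede when you say that ``the real work'' is reducing the cone condition to a bona fide monotone-family statement for circle homeomorphisms. That reduction is not a routine localization. The circle carrying $F_{p\cR^2\zeta(t)}$ is the interval $[\eta_2(0),\xi_2\circ\eta_2(0)]$ with its endpoints glued by $\xi_2$, and both the interval and the gluing map move with $t$; ``pointwise increasing in $t$'' is not an intrinsic property of such a family, it only acquires meaning after a $t$-dependent identification with one fixed circle, and a $t$-dependent change of variable can create or destroy pointwise monotonicity. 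Worse, the identification $\rho(F_\zeta)=\rho(\zeta)$ of \propref{rotation number} conceals an orientation subtlety: for the translation model $\eta(x)=x+a$, $\xi(x)=x+b$ with $a<0<b$, the glued map is the rotation by $-|a|$ on a circle of length $b$, whose rotation number in the natural orientation of the glued interval is $1-|a|/b$, while the height-defined $\rho(\zeta)$ equals $|a|/b$; a pointwise increase of the branches ($a\uparrow$, $b\uparrow$) raises the former and lowers the latter. So the very sign of the monotonicity of $\sigma(t)$ --- which is the whole content of the proposition --- is decided exactly by the identifications you leave unspecified, and cannot be imported from the classical monotone-family theorem by fiat. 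As written, Step 1 is a gap, not a lemma, and your Steps 2--3 only repackage it.

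The paper's own proof takes a different and more elementary route that avoids circle models entirely: it differentiates the orbit of the critical point, using the chain rule for compositions (in the spirit of (\ref{eqn-comp})--(\ref{eqn-comp2})) to show that the closest-return points $\zeta(t)^{k}(0)$, for all returns from the second renormalization onward, move monotonically in $t$; this controls the heights $r_i(t)$ of \emph{all} renormalization levels at once, which then vary monotonically with alternating parity, and the conclusion for $\rho=[r_0,r_1,r_2,\ldots]$ follows from the alternating monotonicity of the continued fraction in its entries, with strictness at irrational $\rho(t_0)$ coming from the fact that infinitely many heights are then in play. If you carry out your Step 1 by this orbit/height argument --- which is precisely the missing content --- then your Steps 2 and 3 become superfluous, since the height argument already yields monotonicity at every parameter simultaneously rather than only near irrational values followed by a global patching argument.
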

\begin{proof}
Fix $t_0\in(0,1)$ and let $\zeta({t_0})^k(0)\neq 0$ 
be a closest return of $0$
under the dynamics of the pair $\zeta({t_0})$. 
An easy induction based on the Chain Rule shows that $\frac{d}{dt}\zeta(t)^k(0)|_{t=t_0}$ is positive for all $k$ starting with the first returns corresponding to the second renormalization. Thus, the 
heights $r_{2i}$ of renormalizations $\cR^{2i}\zeta(t)$ {\it decrease}, and the heights $r_{2i+1}$ of renormalizations $\cR^{2i+1}\zeta(t)$
{\it increase} with $t$. Hence, the value of the rotation number $\rho=[r_0,r_1,\ldots]$ is a non-decreasing function of $t$.
The last assertion is similarly evident and is left to the reader. 
\end{proof}

\subsection*{The expansion properties of the cone field $\cC$.}
We begin by recalling
how the composition operator acts on vector fields. For a pair of analytic functions $f$ and $g$ of the real variable, denote 
$$\text{Comp}(f,g)=f\circ g.$$
Consider $\text{Comp}$ as an operator $C^\omega\times C^\omega\to C^\omega$ and let $D\text{Comp}$ denote its differential. An elementary calculation shows that 
\begin{equation}
\label{eqn-comp}
D\text{Comp}|_{(f,g)}:(\phi,\gamma)\to f'\circ g\cdot \gamma+\phi\circ g.
\end{equation}
The significance of the formula (\ref{eqn-comp}) for us lies in the following trivial observation: if $f$ and $g$ are both increasing functions,
and the vector fields $\phi$ and $\gamma$ are non-negative, then
\begin{equation}
\label{eqn-comp2}
\inf_x D\text{Comp}|_{(f,g)}(\phi,\gamma)\geq \inf_x\phi.
\end{equation}

\begin{prop}
\label{pr:non-empty}
Fix a twice-renormalizable pair $\zeta=(\eta,\xi)\in \bB^{D,E}$. Then $\cC_\zeta$ is non-empty.

\end{prop}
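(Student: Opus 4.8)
The goal is to exhibit at least one tangent vector $\bar v\in\bT_\zeta$ with $\inf_x\nabla_{\bar v}p\cR^2\zeta>0$ on $I_2\cup J_2$. The natural candidate is the vector field generated by composing with the translation flow in the source: take $\bar v=(v_\eta,v_\xi)$ where $v_\eta\equiv 1$ on (a neighborhood of) the domain of $\eta$ and $v_\xi\equiv 1$ on that of $\xi$ — that is, the direction $\zeta\mapsto\zeta(\cdot)+ \text{(constant)}$, or more precisely the variation $\eta\mapsto\eta\circ(\mathrm{id}+t)$, $\xi\mapsto\xi\circ(\mathrm{id}+t)$. The plan is to show this $\bar v$ lies in $\cC_\zeta$ by tracking how it propagates through the pre-renormalization operator.

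First I would recall that $p\cR^2\zeta$ is built entirely by composing iterates of the increasing maps $\eta$ and $\xi$ (this is exactly the content of the multi-index notation $\zeta^{\bar s}$ in \secref{sec:partition}: each map of $p\cR^2\zeta$ is of the form $\xi^{b_n}\circ\eta^{a_n}\circ\cdots\circ\xi^{b_1}\circ\eta^{a_1}$). Then I would apply formula (\ref{eqn-comp}) repeatedly, starting from the innermost composition. The key input is the monotonicity observation (\ref{eqn-comp2}): if at each stage the two maps being composed are increasing and the two vector fields being combined are non-negative, then the infimum of the resulting vector field is bounded below by the infimum of the ``outer'' one. Since all the maps $\eta$, $\xi$ and their iterates are strictly increasing on the relevant intervals (away from the critical point, and with the right one-sided behavior at it), and since $v_\eta,v_\xi$ are positive constants, an induction on the length of the word shows that the pushforward vector field $\nabla_{\bar v}p\cR^2\zeta$ is non-negative everywhere and, crucially, is bounded below by a positive constant on $I_2\cup J_2$ — the positivity does not degenerate because the ``seed'' vector fields are bounded away from $0$ and the outermost composition in each map of $p\cR^2\zeta$ contributes a term of the form $\phi\circ(\text{increasing map})$ with $\phi$ already strictly positive.

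One technical point to handle with care: the critical point. The maps $\eta,\xi$ have vanishing derivative at $0$, so a term like $\eta'\circ g\cdot\gamma$ can vanish; this is why the statement involves the \emph{second} pre-renormalization rather than the first. After two renormalizations, the domains $I_2,J_2$ and their images under the relevant words $\bar w\prec\bar s_2$, $\bar w\prec\bar t_2$ avoid being pinned at the critical value in a way that lets the additive $\phi\circ g$ terms keep the infimum positive; I would make this precise by noting that in each composition defining $p\cR^2\zeta$, the outermost map is applied on an interval on which it is a diffeomorphism, so the final contribution is $\phi\circ(\text{diffeo})+(\text{positive derivative})\cdot\gamma\geq\inf\phi>0$. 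This is the step I expect to be the main obstacle: bookkeeping exactly where the critical point sits relative to the words $\bar s_2,\bar t_2$ and confirming that the outermost branch in each composition is nonsingular on the relevant interval, so that (\ref{eqn-comp2}) can be invoked with a \emph{strict} lower bound. Once that is checked, the membership $\bar v\in\cC_\zeta$ follows and $\cC_\zeta\neq\emptyset$; combined with \propref{properties cone} this also shows $\cC_\zeta$ is a genuine nonempty open cone.
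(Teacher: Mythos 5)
There is a genuine gap here: the cone $\cC_\zeta$ is by definition a subset of $\bT_\zeta=T_\zeta\bB^{D,E}$, and neither of your candidate vector fields is a tangent vector to $\bB^{D,E}$. The manifold $\bB^{D,E}$ is cut out (see the proof of \propref{Banach1}) by the normalization $\xi(0)=1$, the cubic-critical-point conditions $\eta'(0)=\eta''(0)=\xi'(0)=\xi''(0)=0$, and the almost-commutation conditions $\eta(\xi(0))=\xi(\eta(0))$, $(\eta\circ\xi)'''(0)=(\xi\circ\eta)'''(0)$; a tangent vector $(\bar\alpha,\bar\beta)$ must satisfy the linearizations of all of these. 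The constant field $(1,1)$ (the direction $\zeta\mapsto\zeta+t$) violates the normalization ($\bar\beta(0)=1\neq 0$) and the zeroth-order linearized commutation relation $\bar\alpha(\xi(0))+\eta'(\xi(0))\bar\beta(0)-\bar\beta(\eta(0))-\xi'(\eta(0))\bar\alpha(0)=0$, which for $(1,1)$ reads $\eta'(1)-\xi'(\eta(0))$ and is generically nonzero. Your other reading, $\eta\mapsto\eta\circ(\mathrm{id}+t)$, $\xi\mapsto\xi\circ(\mathrm{id}+t)$, gives the different field $(\eta',\xi')$ (the proposal conflates the two), and this one fails the critical-point condition, since $\bar\alpha''(0)=\eta'''(0)\neq 0$ (it moves the critical point off the origin), as well as the linearized commutation constraint. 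So for both candidates membership in $\cC_\zeta$ cannot even be posed, and the argument collapses at the first step.

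The portion you elaborate on --- pushing positivity through the words $\zeta^{\bar s_2},\zeta^{\bar t_2}$ by iterating (\ref{eqn-comp}) and (\ref{eqn-comp2}), with the additive term $\phi\circ g$ carrying no derivative factors --- is the easy half and agrees with the paper's use of the Chain Rule; the real content of the proposition is to produce a positive direction \emph{compatible with the constraints above}. The paper does this by choosing $(\bar\alpha,\bar\beta)$ positive on the real line except at the three points $0$, $\xi(0)=1$ and $\eta(0)$, where the field vanishes to order $3$ (polynomial fields suffice): vanishing at $0$ gives the critical-point and normalization tangencies, while vanishing at $1$ and at $\eta(0)$ kills every term in the linearized commutation conditions, since those conditions only involve the jets of $\bar\alpha$ at $0$ and $\xi(0)$ and of $\bar\beta$ at $0$ and $\eta(0)$. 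With such a seed field the chain-rule bookkeeping you describe (including the check that on $I_2\cup J_2$ not all terms of the resulting sum vanish simultaneously, which is needed because the seed is no longer bounded below by a positive constant) does yield $\bar v\in\cC_\zeta$. If you replace your translation-type field by a field of this kind, your argument can be repaired; as written, it does not prove the statement.
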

\begin{proof}
 Let $\bar v=(\bar\alpha,\bar\beta)$ have the properties:
\begin{itemize}
\item $\bar \alpha(x)>0$, $\bar\beta(x)>0$ for real $x$ such that $x\notin\{0,1,\eta(0)\}$;
\item for each $x\in\{0,1,\eta(0)\}$, the vector field $\bar v(x)$ vanishes to order $3$.
\end{itemize}
It is evident that vector fields with these properties exist (they can be taken to be polynomial, for instance), and that every such
$\bar v\in \bT_\zeta$. Finally, $\bar v\in \cC_\zeta$ by the Chain Rule (\ref{eqn-comp}).
\end{proof}

For a renormalizable pair $\zeta=(\eta,\xi)$  let us set 
$$\lambda_{\zeta}=\eta^{r_0}(1)>0,$$
where, as before, $r_i$ denotes the height of $\cR^i\zeta$.
%

\begin{prop}
\label{expcone1}
There exist $k\in\NN$ and $\delta>0$ such that the following holds. Let $\zeta\in\bB^{D,E}$ and let $\bar v\in\cC_\zeta$.
Then
$$||D\cR^{2k}_\zeta\bar v||>C\cdot\eps (1+\delta)^k,$$
where $C$ is bounded on compact subsets of $\bB^{D,E}$ and $\eps=\inf Dp\cR^2\bar v(x)>0$.
\end{prop}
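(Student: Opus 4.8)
The plan is to factor $\cR^{2k}$ as the purely combinatorial $p\cR^{2k}$ (a composition of branches of $\zeta$, with no rescaling) followed by the linear rescaling $z\mapsto\lambda_{2k}^{-1}z$. The key mechanism is (\ref{eqn-comp2}): since $p\cR^{2k}\zeta$ is obtained by composing monotone increasing branches, the infimum of a non-negative vector field is never decreased along the tower of pre-renormalizations, so $\inf Dp\cR^{2k}_\zeta\bar v\ge\eps$ for \emph{every} $k$; all the expansion must then come from the factor $\lambda_{2k}^{-1}$, which is exponentially large because $\lambda_{2k}=|I_{2k}|$ shrinks geometrically by the real \emph{a priori} bounds. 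The delicate point is that $\lambda_{2k}$ itself depends on $\zeta$, so $D\cR^{2k}_\zeta\bar v$ contains a correction term coming from the variation of $\lambda_{2k}$ along $\bar v$; I would control it by evaluating the renormalized vector field at the critical point, where this correction has a favourable sign.

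First I would carry out the infimum step. Writing $p\cR^{2k}\zeta=p\cR^{2k-2}(p\cR^2\zeta)$, the two maps $\eta_{2k},\xi_{2k}$ of $p\cR^{2k}\zeta$ are words in the two monotone increasing branches of $\psi:=p\cR^2\zeta$; such a word is a composition $w_m\circ\cdots\circ w_1$ of these branches, and all intermediate images lie in the domain of $\psi$ because $\zeta$ is $2k$ times renormalizable. Applying the Chain Rule (\ref{eqn-comp}) one letter at a time and using (\ref{eqn-comp2}) at each step, an induction — the one already invoked in the proof of \propref{rho-monotone} — shows that every partial differential $D(w_j\circ\cdots\circ w_1)\bar v$ is non-negative with infimum at least $\inf Dp\cR^2_\zeta\bar v=\eps$. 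Hence $\inf Dp\cR^{2k}_\zeta\bar v\ge\eps$ on $I_{2k}\cup J_{2k}$, and in particular, denoting by a subscript $\eta$ or $\xi$ the two components of this vector field,
$$(Dp\cR^{2k}_\zeta\bar v)_\eta(0)\ge\eps\qquad\text{and}\qquad\dot\mu:=(Dp\cR^{2k}_\zeta\bar v)_\xi(0)\ge\eps.$$

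Next I would differentiate the rescaling. The first component ($\eta$-branch) of $\cR^{2k}\zeta=\widetilde{p\cR^{2k}\zeta}$ is $G(z)=\lambda_{2k}^{-1}\eta_{2k}(\lambda_{2k}z)$, where the normalization $\widetilde{\xi_{2k}}(0)=1$ forces $\lambda_{2k}=\xi_{2k}(0)=|I_{2k}|>0$; note $G(0)=\lambda_{2k}^{-1}\eta_{2k}(0)<0$, since $\eta_{2k}(0)$ is the left endpoint of the domain of $\xi_{2k}$. Differentiating $G$ along $\bar v$, with $\lambda_{2k}=\xi_{2k}(0)$ also varying at rate $\dot\mu$, and evaluating at $z=0$, every term carrying a factor $z$ drops out and one obtains
$$\big(D\cR^{2k}_\zeta\bar v\big)_\eta(0)=\lambda_{2k}^{-1}\Big((Dp\cR^{2k}_\zeta\bar v)_\eta(0)-\dot\mu\,G(0)\Big).$$
Since $\dot\mu\ge\eps>0$ and $G(0)<0$, the term $-\dot\mu\,G(0)$ is non-negative, so the bracket is at least $(Dp\cR^{2k}_\zeta\bar v)_\eta(0)\ge\eps$. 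As $0\in[0,1]\subset D$, the tangent-space norm dominates the value of the first component at $0$, giving
$$\big\|D\cR^{2k}_\zeta\bar v\big\|\ \ge\ \big|\big(D\cR^{2k}_\zeta\bar v\big)_\eta(0)\big|\ \ge\ \lambda_{2k}^{-1}\,\eps.$$

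Finally I would supply the geometric decay of $\lambda_{2k}$. By \propref{realbounds} — whose threshold $N$ may be taken uniform over $\bB^{D,E}$ by the argument of \thmref{invariance2} — for $n\ge N$ adjacent intervals of the $(n+1)$-st dynamical partition are $C_0$-commensurable; since $I_n$ contains $I_{n+2}$ together with at least one further partition interval commensurable with it, $|I_{n+2}|\le(1-c)|I_n|$ with $c=1/(C_0+1)$. Iterating from $k_0:=\lceil N/2\rceil$, we get $|I_{2k}|\le C(\zeta)(1-c)^k$ for $k\ge k_0$, with $C(\zeta)$ bounded on compact subsets of $\bB^{D,E}$; setting $1+\delta:=(1-c)^{-1}$ and combining with the previous display yields $\|D\cR^{2k}_\zeta\bar v\|\ge C(\zeta)^{-1}\eps(1+\delta)^k$ for $k\ge k_0$, as asserted. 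I expect the only genuine obstacle to be the differentiation step: a priori the $\zeta$-dependence of the deep return value $\lambda_{2k}$ could be large and could cancel the expansion carried by $\lambda_{2k}^{-1}$, and the point is precisely that at the critical point $0$ the correction $-\dot\mu\,G(0)$ reinforces rather than cancels the main term — which uses both the positivity of \emph{both} components of $Dp\cR^{2k}_\zeta\bar v$ from the first step and the structural sign $G(0)<0$.
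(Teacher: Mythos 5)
Your proposal is correct and follows essentially the same route as the paper: the paper likewise writes $\cR^{2k}\zeta$ as $\lambda_{2k}^{-1}\,p\cR^{2k}\zeta(\lambda_{2k}\,\cdot)$, uses repeated application of (\ref{eqn-comp})--(\ref{eqn-comp2}) to get $\partial_t H_{t,k}\ge\eps$ and $\frac{d}{dt}\lambda_{t,k}>0$, evaluates the differentiated rescaling at $x=0$ where the correction term $-\frac{\dot\lambda_{t,k}}{\lambda_{t,k}^2}H_{t,k}(0)$ has the favourable sign because $H_{t,k}(0)<0$, and then invokes the real \emph{a priori} bounds for $\lambda_{0,k}\le C(1+\delta)^{-k}$. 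Your extra detail on the geometric decay of $|I_{2k}|$ via \propref{realbounds} only spells out what the paper cites as standard.
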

\begin{proof}
Let $\bar v(x)=(\bar\alpha(x),\bar\beta(x))\in \cC_{\zeta}$. Consider a smooth deformation 
\begin{equation}
\label{deform}
\zeta_t^{\bar v}=(\eta+t\bar\alpha+o(t),\xi+t\bar\beta+o(t))\equiv (\eta_t,\xi_t)\in\bB^{D,E}.
\end{equation}
For $m\in\NN$ let us denote  $$\cR^{2m}\zeta_t^{\bar v}\equiv (\eta_{t,m},\xi_{t,m})\text{, and }p\cR^{2m}\zeta_t^{\bar v}\equiv(H_{t,m},K_{t,m}).$$
Let $$\lambda_{t,m}\equiv K_{t,m}(0)>0.$$
An easy induction shows that 
\begin{itemize}
\item[(a)] $\eta_{t,k}(x)=\frac{1}{\lambda_{t,k}} H_{t,k}\circ (\lambda_{t,k} x);$
\item[(b)] $H_{t,k}(0)<0$.
\end{itemize}
A repeated application of (\ref{eqn-comp}) 
implies that 
\begin{itemize}
\item[(c)] $\frac{\partial}{\partial t}H_{t,k}(x)>\eps>0$ where $\eps=\inf Dp\cR^2\zeta\bar v(x)$;
\item[(d)] $\frac{d}{d t}\lambda_{t,k}>0$.
\end{itemize}
We calculate:
$$\frac{\partial}{\partial t}\left( \frac{1}{\lambda_{t,k}}H_{t,k}(\lambda_{t,k} x) \right)=-\frac{\frac {d}{dt}\lambda_{t,k}}{(\lambda_{t,k})^2}H_{t,k}(\lambda_{t,k} x)+\frac{1}{\lambda_{t,k}}\left(\frac{\partial H_{t,k}(\lambda_{t,k} x)}{\partial t}+\frac{\partial H_{t,k}(x)}{\partial x}\frac{d \lambda_{t,k}}{d t}x\right).$$
Substituting $x=0$ and using $(a)-(d)$ we see that  
$$\left. \frac{\partial}{\partial t}\right| _{t=0}\left. \left( \frac{1}{\lambda_{t,k}}H_{t,k}(\lambda_{t,k} x) \right)\right| _{x=0}=
D\cR^{2k}\bar v(0)\geq \frac{1}{\lambda_{0,k}}\eps.$$
The standard real {\it a priori} bounds imply that 
$$\lambda_{0,k}\leq C(1+\delta)^{-k},$$
where $\delta>0$ is universal, and $C$ is bounded on compact subsets of $C^3$-commuting pairs,
which completes the proof.
\end{proof}

\subsection{Local stable manifold of a periodic point of $\cR$}
\label{stable manifold section}

\noindent
As before, let us work in the notation of \thmref{th:main1}. Set $\zeta\equiv\zeta_*$.
 
Set $\rho=\rho(\zeta)$, and define $$\cD_\rho=\{\gamma\in\bB^{D,E},\text{ such that }\rho(\gamma)=\rho\}.$$

The following proposition 
 directly follows from \thmref{thmGuarino} and compactness considerations:
\begin{prop}
\label{local stable}
There exists a neighborhood $Y$ of $\zeta$ in $\bB^{D,E}$ such that for every 
$\gamma\in Y\cap \cD_\rho$
$$\cR^{pm} \gamma\underset{m\to\infty}{\longrightarrow}\zeta$$ at a geometric rate, uniformly in $Y$.
\end{prop}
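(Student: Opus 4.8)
The plan is to derive Proposition~\ref{local stable} from the exponential contraction of Theorem~\ref{thmGuarino} (the de Melo--Guarino result) together with a compactness argument which upgrades the $C^0$ statement on the real line to the stronger conclusion of convergence in the Banach manifold $\bB^{D,E}$ at a geometric rate, uniformly in a neighborhood.

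First I would set up the framework. Recall from Theorem~\ref{th:main1} that $\zeta=\zeta_*$ is a fixed point of $\cR^p$ in $\bB^{D,E}$, and that by Theorem~\ref{invariance2} there are larger domains $D'\Supset D$, $E'\Supset E$ with $\cR^p(\gamma)\in\bB^{D',E'}$ for every $p$-times renormalizable $\gamma\in\bB^{D,E}$. Take a point $\gamma\in\cD_\rho$ close to $\zeta$; since $\rho=\rho_*$ is irrational (a periodic-type number, hence of bounded type), $\gamma$ is infinitely renormalizable and $\rho(\gamma)=\rho(\zeta)$. Theorem~\ref{thmGuarino} then gives a constant $C(\gamma)>0$ and a universal $\delta>0$ with $\dist_{C^0}(\cR^n\gamma,\cR^n\zeta)<C(\gamma)(1+\delta)^{-n}$ for all $n$. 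The first step of the argument proper is to show the constant $C$ can be chosen uniformly over a neighborhood $Y$ of $\zeta$: this is where compactness of $\bB^{D,E}$ in the $C^3$-metric on the real line enters (as used in the proof of Theorem~\ref{invariance2}), since by Koebe distortion a $C^0$-small, $\bB^{D,E}$-bounded neighborhood of $\zeta$ is precompact in the $C^3$-topology, and the Guarino estimate depends on $\gamma$ only through $C^3$-a~priori bounds along the orbit, which are uniform on such a neighborhood.

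Next I would bootstrap from the real line to the complex domain. By Theorem~\ref{complex bounds} there exist universal $\mu>0$, $K>1$ and $R>1$ such that, for $n$ beyond some $N$ uniform on the precompact set $Y$, the renormalizations $\cR^n\gamma$ and $\cR^n\zeta$ both lie in $\bH(\mu,K)$ with common range $D_R(0)$; by Lemma~\ref{bounds compactness} they then sit inside $\bB^{D',E'}$ with $D'\Supset D$, $E'\Supset E$. Since the maps of $\cR^n\gamma$ and $\cR^n\zeta$ are uniformly bounded holomorphic functions on $D'\supset\!\supset D$ agreeing to within $C(1+\delta)^{-n}$ on the real slice inside $D$, a standard Cauchy/normal-families estimate — i.e. the fact that on $D$ a bounded holomorphic function is controlled by its values on a slightly smaller real-symmetric region, using the real symmetry to pass from $[0,1]$ to a neighborhood — converts the $\dist_{C^0}$ bound into a bound of the form $\|\cR^n\gamma-\cR^n\zeta\|_{C^\omega(D)\times C^\omega(E)}<C'(1+\delta')^{-n}$ for a slightly smaller geometric rate $1+\delta'$ and a constant $C'$ still uniform on $Y$. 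Finally, specializing to $n=pm$ gives $\cR^{pm}\gamma\to\zeta$ in $\bB^{D,E}$ at a geometric rate, uniformly in $Y\cap\cD_\rho$, and shrinking $Y$ if necessary to keep everything inside the chart guarantees all the renormalizations are defined and stay in $\bB^{D,E}$.

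The main obstacle I expect is the step that promotes the real-line $C^0$ estimate to a complex-analytic estimate with a geometric rate, uniformly over the neighborhood: one must be careful that the loss of domain (from $D'$ down to $D$) and the slight degradation of the rate are absorbed cleanly, and that the constant produced by the Cauchy estimate genuinely depends only on the moduli $\mu$, $K$, $R$ and not on the individual pair — this is precisely what the uniform complex a~priori bounds of Theorem~\ref{complex bounds} and the Carath\'eodory-compactness of Lemma~\ref{bounds compactness} are there to supply. Everything else is a routine combination of Koebe distortion, compactness, and the cited theorems.
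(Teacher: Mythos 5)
Your proposal is correct and takes essentially the same route as the paper, which disposes of this proposition in one line as a direct consequence of Theorem~\ref{thmGuarino} together with compactness considerations (complex a priori bounds plus Koebe/Carath\'eodory compactness), exactly the ingredients you assemble. The only caveat is cosmetic: the quantitative step promoting $C^0$-closeness on the real interval to closeness in $C^\omega(D)\times C^\omega(E)$ is not a Cauchy estimate but a two-constants (harmonic-measure) estimate for functions bounded on $D'\Supset D$, which indeed preserves a (slightly degraded) geometric rate as you claim.
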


Below we shall demonstrate that the local stable set of $\zeta$ is a graph over a hyperplane:
\begin{thm}
\label{stable manifold}
There is an open neighborhood $W\subset \bB^{D,E}$ of $\zeta$ such that $\cD_\rho\cap W$
is a $C^0$-graph over a hyperplane in a local chart in $\bB^{D,E}$.
\end{thm}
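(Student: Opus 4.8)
The plan is to realize $\cD_\rho \cap W$ as the graph of a continuous function over a codimension-one linear subspace, using the expanding cone field $\cC$ from the previous section together with the monotonicity of the rotation number along cone-admissible curves.

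First I would choose coordinates adapted to the dynamics at $\zeta=\zeta_*$. By \thmref{th:main1}, the linearization $\cL = D\cR^p|_{\zeta_*}$ is compact with a single simple unstable eigenvalue $\nu>1$; let $e_u$ be a corresponding eigenvector and let $\bH^s$ be the $\cL$-invariant closed hyperplane (the spectral subspace for the part of the spectrum inside $\DD$), so that $\bT_{\zeta_*}\bB^{D,E} = \RR e_u \oplus \bH^s$. Shifting $\zeta_*$ to the origin of a local chart, this gives a splitting of a neighborhood as $(-\epsilon_0,\epsilon_0)\times B^s$, where $B^s$ is a ball in the hyperplane. The goal is then to show that for each $w\in B^s$ there is exactly one $t=g(w)$ with $(t,w)\in\cD_\rho$, and that $g$ is continuous. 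The key geometric input is that, after possibly passing to a further iterate of $\cR^p$ and shrinking, the unstable eigenvector $e_u$ lies in the expanding cone $\cC_{\zeta_*}$; this should follow from \propref{pr:non-empty} together with the fact that the expansion estimate of \propref{expcone1} forces the unstable direction to be cone-admissible (the cone is $D\cR^p$-invariant and all vectors in it are eventually expanded, so the dominant unstable direction cannot escape it). Consequently the ``vertical'' segments $t\mapsto (t,w)$ for fixed $w$ have tangent vector in $\cC$ once we arrange $e_u\in\cC$, uniformly on $W$.

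Next I would run the monotonicity argument. Along each vertical segment $t\mapsto\zeta^w(t)\equiv(t,w)$ the tangent vector lies in $\cC_{\zeta^w(t)}$, so by \propref{rho-monotone} the function $t\mapsto\rho(\zeta^w(t))$ is non-decreasing, and strictly increasing at any $t$ where the value is irrational. Since $\rho_*=\rho(\zeta_*)$ is irrational, strict monotonicity near $t=g(w)$ gives uniqueness of the intersection point with $\cD_\rho$ along that segment. For existence, I would use \propref{local stable}: the set $\cD_\rho\cap Y$ is non-empty and, by \thmref{thmGuarino} and compactness, $\cR^{pm}$ contracts it geometrically toward $\zeta_*$; combined with the continuity of $\zeta\mapsto\rho(\zeta)$ (which follows from continuity of $\cR$ and of the heights $\chi$) and the intermediate value property, the monotone function $t\mapsto\rho(\zeta^w(t))$ must attain the value $\rho_*$ for every $w$ in a sufficiently small $B^s$ — here one uses that at $w=0$ the value $\rho_*$ is attained at $t=0$ and that $\rho$ varies continuously in $w$, so the sign change persists for nearby $w$. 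This defines $g(w)$ uniquely.

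Finally, continuity of $g$: if $w_n\to w$ and $g(w_n)$ had two distinct subsequential limits, both would be points of the closed set $\cD_\rho$ on the vertical segment over $w$, contradicting uniqueness; hence $g(w_n)\to g(w)$, so $g$ is continuous and $\cD_\rho\cap W$ is a $C^0$-graph over the hyperplane $\bH^s$, as claimed. The main obstacle I anticipate is the first step — verifying that the unstable eigenvector genuinely lies in (the interior of) the expanding cone $\cC$, so that all the vertical segments are cone-admissible; this requires relating the abstract spectral decomposition of $\cL$ from \thmref{th:main1} to the concrete cone $\cC$ defined via positivity of $Dp\cR^2$, presumably by checking that $D\cR^p(\cC)\subset\cC$ and invoking the expansion bound of \propref{expcone1} to locate the top eigenvector inside $\cC$. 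Once that alignment is secured, the rest is a soft combination of monotonicity, compactness, and uniqueness.
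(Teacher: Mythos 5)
Your proposal has a structural problem before any details: it assumes the hyperbolic splitting $\bT_{\zeta_*}=\RR e_u\oplus \bH^s$ with a single simple unstable eigenvalue, i.e.\ the spectral content of \thmref{th:main1}. But in the paper \thmref{stable manifold} is an ingredient in the proof of \thmref{th:main1}: \propref{hyperb2} derives ``at most one unstable eigenvalue'' precisely from \thmref{stable manifold} (it gives $\operatorname{codim}W^s(\zeta)<2$), while \propref{expcone1} supplies ``at least one''. At the point where \thmref{stable manifold} must be proved, nothing is known about the spectrum of $\cL$ beyond compactness, so building the hyperplane out of the stable spectral subspace is circular. In addition, the step you yourself flag as the main obstacle --- that the unstable eigenvector lies in the cone $\cC$, to be obtained from an invariance $D\cR^p(\cC)\subset\cC$ plus the expansion bound --- is a genuine gap: cone invariance is nowhere established in the paper and is not obvious (the computation in \propref{expcone1} controls $D\cR^{2k}\bar v$ at the critical point and in norm, not the defining condition $\inf_x\nabla_{\bar v}p\cR^2>0$ for the renormalized pair, and the rescaling term $\tfrac{d}{dt}\lambda_{t,k}$ interferes). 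Your monotonicity/IVT/uniqueness argument along segments parallel to a fixed cone vector is sound and close in spirit to \propref{rho-monotone}, but it only yields a graph once a transverse hyperplane has been produced by some non-circular means --- and that is the real content of the theorem.

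The paper produces the hyperplane without any spectral data. It considers the codimension-one submanifolds $\cD_k$ of pairs with rotation number $p_k/q_k$ for which $0$ is periodic of period $q_k$, and shows $T_\gamma\cD_k\cap\cC_\gamma=\emptyset$ (\lemref{cone not in tk}). Taking $\bar v\in\cC_\zeta$ as in \propref{pr:non-empty} and the line $\zeta_t=\zeta+t\bar v$, the Intermediate Value Theorem gives parameters $t_k\to 0$ with $\zeta_{t_k}\in\cD_k$; Hahn--Banach then gives unit functionals $h_k$ with $\Ker h_k=T_{\zeta_{t_k}}\cD_k$ and $h_k(\bar v)>\eps$, and Alaoglu gives a weak-$*$ limit $h\not\equiv 0$, whose kernel $T$ is the hyperplane. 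Each $\cD_k$ is (locally, over a fixed neighborhood $\cU\subset T$) a graph transverse to $\bar v$, and a $C^0$-convergent subsequence of these graphs has limit $G$ consisting of pairs with rotation number $\rho$, which is identified with $\cD_\rho$ near $\zeta$. If you want to salvage your scheme, replace $e_u$ by a vector chosen directly in $\cC_\zeta$ and replace the spectral hyperplane by one constructed from the tangent spaces of the $\cD_k$'s --- at which point you have essentially reconstructed the paper's proof.
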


\noindent
Denote $p_k/q_k$ the reduced form of the $k$-th
continued fraction convergent of $\rho$. Furthermore, 
define $\cD_k$ as the set of $\gamma\in\bB^{D,E}$ for which $\rho(\gamma)=p_k/q_k$ and 
$0$ is a periodic point with period $q_k$.
As follows from the Implicit Function Theorem, this is a local codimension $1$ submanifold.
We note:
\begin{lem}
\label{cone not in tk}
Let $\gamma\in \cD_k$ for $k=2m\geq 2$, and denote $T_\gamma\cD_k\subset \bT_\gamma$ the tangent space to $\cD_k$
 at this point.
Then $$T_\gamma\cD_k\cap \cC_\gamma=\emptyset.$$
\end{lem}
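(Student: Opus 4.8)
The plan is to show that any vector $\bar v\in\cC_\gamma$ is ``transverse'' to $\cD_k$ by evaluating the monotonicity functional that defines the cone against the defining equation of $\cD_k$. Recall $\cD_k$ is cut out near $\gamma$ by the condition that $0$ is a periodic point of period $q_k$ for the associated circle map; equivalently, writing $p_k/q_k=[r_0,\ldots,r_{k-1}]$, the point $0$ is fixed by an appropriate composition of the $\eta$- and $\xi$-branches of $\gamma$, which is exactly the map $\eta_{k}|_{I_{k}}$ of the $k$-th pre-renormalization having a fixed point at (or endpoint equal to) $0$. So the equation defining $\cD_k$ locally is $\Phi(\gamma)\equiv p\cR^{k}\gamma\text{'s height }=\infty$, or more concretely the vanishing of a coordinate such as $H(0)$ where $p\cR^k\gamma=(H,K)$ — this is the ``$\chi=\infty$'' boundary. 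The tangent space $T_\gamma\cD_k$ is then the kernel of the derivative $D\Phi|_\gamma$.

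First I would set up the computation: for $\bar v=(\bar\alpha,\bar\beta)\in\cC_\gamma$, the defining property is $\inf_x\nabla_{\bar v}p\cR^2\gamma>0$ on $I_2\cup J_2$, and by the Chain Rule argument used in the proof of Proposition~\ref{rho-monotone} (an easy induction via (\ref{eqn-comp}) and the positivity estimate (\ref{eqn-comp2})), this positivity propagates: $\frac{d}{dt}(p\cR^{2m}\gamma_t)(x)|_{t=0}>0$ for all $x$ in the domain, for every $m\ge 1$. In particular, since $k=2m$, the relevant coordinate of $D(p\cR^k\gamma)\bar v$ — the one whose vanishing defines $\cD_k$ — is strictly positive. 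Concretely: the defining function for $\cD_k$ (e.g. $\gamma\mapsto H_\gamma(0)$, or $\gamma\mapsto \eta_{k,\gamma}$ evaluated at the relevant point) has strictly positive derivative along $\bar v$, hence $\bar v\notin\ker D\Phi|_\gamma=T_\gamma\cD_k$. This gives $T_\gamma\cD_k\cap\cC_\gamma=\emptyset$.

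The key steps in order: (1) identify the local defining equation of $\cD_k$ as the vanishing of a specific component of $p\cR^k\gamma$ (the one expressing that $0$ is periodic of period $q_k$), so $T_\gamma\cD_k$ is its kernel; (2) observe that $k=2m$ is even, so that the induction controlling the sign of $\frac{d}{dt}p\cR^{2m}$ — which requires an even number of pre-renormalization steps because only then are all the branches being composed increasing, so that (\ref{eqn-comp2}) applies at every stage — can be run; (3) conclude that $D\Phi|_\gamma(\bar v)>0$ for every $\bar v\in\cC_\gamma$, hence $\bar v\notin T_\gamma\cD_k$.

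The main obstacle I anticipate is step (1) together with the parity bookkeeping in step (2): one must pin down precisely which scalar condition defines $\cD_k$ locally and verify that the cone-positivity, which is stated for $p\cR^2$, genuinely propagates to $p\cR^{k}$ with the correct sign — the point being that after an even number of renormalizations the orientation is preserved (the branches $\eta_k$, $\xi_k$ are increasing), so the composition formula (\ref{eqn-comp}) keeps the derivative of the relevant returns positive; an odd number of steps would flip a sign and the argument would fail, which is exactly why the hypothesis $k=2m\ge 2$ appears. Once the parity is handled, the transversality is immediate from the Chain Rule, essentially a restatement of the mechanism already used in Proposition~\ref{rho-monotone} and Proposition~\ref{expcone1}.
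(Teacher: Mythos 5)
Your proposal is correct and follows essentially the same route as the paper: the paper's two-line proof takes a curve $\gamma_t=\gamma+t\bar v+o(t)$ with $\bar v\in\cC_\gamma$ and notes that the cone positivity, propagated through compositions of increasing branches via (\ref{eqn-comp})--(\ref{eqn-comp2}), forces $\gamma_t^{q_k}(0)\neq 0$ for small $t>0$, which is exactly your statement that the derivative of the defining functional $\gamma\mapsto\gamma^{q_k}(0)$ is strictly positive along cone directions, so cone vectors cannot lie in its kernel $T_\gamma\cD_k$. The only difference is presentational (your explicit kernel/submersion phrasing, and your parity gloss, versus the paper's one-parameter-family phrasing), not mathematical.
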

\begin{pf}
Let $\bar v\in \cC_\gamma$ and suppose $\{\gamma_t\}$ is a one-parameter family
such that  $$\gamma_t=\gamma+t\bar v+o(t).$$
Then for sufficiently small values of $t>0$, 
$\gamma_t^{q_k}>\gamma^{q_k}$, and  hence $\gamma_t^{q_k}(0)\neq 0.$
\end{pf}

\noindent
Now let $\bar v\in \cC_\zeta$ be as in the proof of \propref{pr:non-empty},
 and let
$\{\zeta_t\}$ be a one-parameter family in $\bB^{D,E}$
such that  $$\zeta_t=\zeta+t\bar v.$$
Elementary considerations of the Intermediate Value Theorem imply that 
for every large enough $k$  there exists a value of $t>0$ such that 
the map $\zeta_t\in\cD_k$. 
Moreover,
if we denote $t_k$ the smallest parameter with this property, 
then $t_k\to 0$. 
Set $\zeta_k=\zeta_{t_k}$ and let $T_k=T_{\zeta_k}\cD_k\subset \bT$.
By  \lemref{cone not in tk} and the Hahn-Banach Theorem there exists $\eps>0$
such that for every $k$ there exists a linear functional $h_k\in (\bT_\zeta)^*$
with $||h_k||=1$,
such that $\operatorname{Ker}h_k=T_k$ and $h_k(\bar v)>\eps$.
By the Alaoglu Theorem, we may select a subsequence
$h_{n_k}$ weakly-$*$ converging to $h\in (\bT_\zeta)^*$. Necessarily 
$\bar v\notin \operatorname{Ker}h$, so
$h\not\equiv 0$. Set $T=\operatorname{Ker}h$.

\smallskip
\noindent
{\it Proof of \thmref{stable manifold}.} 
By the above, we may select a splitting $\bT_\zeta=T\oplus \bar v\cdot\RR$.
Denote $p:\bT_\zeta\to T$ the corresponding projection, and let
$\Psi:\bB^{D,E}\to \bT_\zeta$ be a local chart at $\zeta$.
\lemref{cone not in tk} together with the Intermediate Value Theorem imply that
$p\circ \Psi:\cD_k\to T$ is an isomorphism onto the image,
and there exists an open neighborhood $\cU$ of the origin in $T$, such that 
$p\circ \Psi(\cD_k)\supset \cU$. 
We may
select a $C^0$-converging subsequence $\cD_{k_j}$,
whose limit is a graph $G$ over $\cU$.
 Necessarily, for every $\gamma\in G$, $\rho(\gamma)=\rho$. As we have seen above,
every point $\gamma\in\cD_\rho$ in a sufficiently small neighborhood of $\zeta$ is in
$G$, and thus $G$ is an open neighborhood in $\cD_\rho$.
$\Box$

\subsection{Proof of \thmref{th:main1}}
\label{sec:hyperb}
Let us work in the notation of \thmref{th:main1} again. Note that by \thmref{invariance2}, the operator $\cL$ is compact, and hence, by the standard facts of the spectral theory of compact operators, we have:
\begin{itemize}
\item every element of the spectrum of $\cL$ is an eigenvalue;
\item the spectrum of $\cL$ has no accumulation points except for $0$.
\end{itemize}
We now prove:
\begin{prop}
\label{hyperb2}
The operator $\cL$ has a single unstable eigenvalue.
\end{prop}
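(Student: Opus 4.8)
The plan is to combine the cone-field expansion estimate (\propref{expcone1}) with the stable-manifold description (\thmref{stable manifold}) and the compactness of $\cL$ to pin down the dimension of the unstable subspace to exactly one. Since $\cL$ is compact and, by \propref{hyperb1}, has no eigenvalues on the unit circle, its spectrum splits as a finite set of eigenvalues of modulus $>1$ (each of finite multiplicity) together with a part compactly contained in $\DD$; write $\bT_\zeta = E^u \oplus E^s$ for the corresponding $\cL$-invariant splitting, where $E^u$ is finite-dimensional and the restriction $\cL|_{E^s}$ has spectral radius $<1$. The goal is to show $\dim E^u = 1$.

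First I would show $\dim E^u \geq 1$. The vector $\bar v \in \cC_\zeta$ constructed in \propref{pr:non-empty} is expanded: by \propref{expcone1}, iterating an appropriate power $\cR^{2k}$ stretches any vector in the cone by a factor $C\eps(1+\delta)^k$, and since $\zeta = \zeta_*$ is a fixed point of $\cR^p$ with $p = mk$, a fixed power of $\cL$ expands $\bar v$ by a definite factor bounded below, hence $\|\cL^N \bar v\| \to \infty$. If $E^u$ were trivial, $\|\cL^N w\|$ would decay geometrically for every $w$, a contradiction; so $\dim E^u \geq 1$. (One must check that the cone is $\cL$-forward-invariant up to the finitely many rescalings, so that the estimate of \propref{expcone1} really iterates — this follows from (\ref{eqn-comp2}) and the definition of $\cC$.)

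Next I would show $\dim E^u \leq 1$. Here the stable manifold does the work. By \thmref{stable manifold}, $\cD_\rho \cap W$ is a $C^0$-graph over a hyperplane $T = \operatorname{Ker} h$ in a local chart, and by \propref{local stable} every point of $\cD_\rho$ near $\zeta$ is attracted to $\zeta$ at a geometric rate under $\cR^p$. A standard invariant-manifold argument identifies the tangent space to the local stable set with $E^s$: the set of points attracted geometrically to $\zeta$ is, locally, the graph of a $C^0$ function over $E^s$, and since $\cD_\rho \cap W$ is such a graph over the codimension-one subspace $T$, we get $T \supseteq E^s$, hence $E^s$ has codimension at most one, i.e. $\dim E^u \leq 1$. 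Combined with the previous paragraph, $\dim E^u = 1$, and the unique unstable eigenvalue is real and positive because $\cL$ preserves the real structure and expands the positive cone direction $\bar v$ (so the Perron–Frobenius-type sign is forced); simplicity follows since a one-dimensional $E^u$ cannot carry a nontrivial Jordan block.

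The main obstacle I anticipate is the matching of the two "hyperplanes": one must argue rigorously that the $C^0$-graph $\cD_\rho \cap W$ over $T$ is contained in (indeed equals, locally) the true stable manifold of the $C^1$ map $\cR^p$, so that $T$ can be identified with $E^s$ rather than merely some codimension-one subspace transverse to $\bar v$. This requires invoking the stable manifold theorem for the compact operator $\cL$ and checking that its (smooth) local stable manifold must contain every point converging geometrically to $\zeta$ — which is exactly the content of \propref{local stable} applied to points of $\cD_\rho$ — together with \lemref{cone not in tk} to guarantee that $\bar v$ is genuinely transverse to $T$ and therefore $\bar v \cdot \RR$ accounts for all of $E^u$. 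Once this identification is in place the dimension count is immediate.
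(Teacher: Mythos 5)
Your argument is essentially the paper's own proof, which consists of exactly the same two ingredients: \propref{expcone1} (cone expansion at the fixed point) gives at least one unstable eigenvalue, and \thmref{stable manifold} (the local stable set is a codimension-one graph, together with the geometric convergence of \propref{local stable}) gives $\operatorname{codim} W^s(\zeta)<2$, hence at most one. Your additional care about identifying the graph's base hyperplane with $E^s$ and about forward invariance of the cone fills in details the paper leaves implicit, but the route is the same.
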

\begin{proof}
By \propref{expcone1}, the operator $\cL$ has at least one unstable eigenvalue. On the other hand, by \thmref{stable manifold},
$\operatorname{dim} W^u(\zeta)<2$. 
\end{proof}
Finally, 
\begin{prop}
\label{hyperb1}
The operator $\cL$ has no eigenvalues on the unit circle.
\end{prop}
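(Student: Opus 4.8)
The plan is to use compactness of $\cL$ to reduce everything to eigenvalues of modulus exactly one, and then to kill these by two separate mechanisms: expansion along the cone field $\cC$, and the codimension-one structure of the local stable set. Since $\cL$ is compact (\thmref{invariance2}), its nonzero spectrum is a set of eigenvalues of finite multiplicity with $0$ as the only possible accumulation point; in particular only finitely many eigenvalues have modulus $\ge 1$, they span a finite-dimensional $\cL$-invariant subspace $E^{+}$, and $\bT_{\zeta_*}=E^{+}\oplus E^{-}$ with $E^{-}$ closed and $\cL$-invariant and $\cL|_{E^{-}}$ of spectral radius $<1$. Assume for contradiction that $\cL$ has an eigenvalue $\mu_0$ with $|\mu_0|=1$, and let $V_0\subset E^{+}$ be its real generalized eigenspace, so that $n\mapsto\|\cL^{n}|_{V_0}\|$ grows at most polynomially.

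First I would record a pointwise expansion estimate for cone vectors. Rerunning the computation in the proof of \propref{expcone1} with an arbitrary even exponent $N$ shows that for every nonzero $\bar v\in\cC_{\zeta_*}$, setting $\eps=\inf_x Dp\cR^2\zeta_*\bar v(x)>0$, the infimum over $x$ of $\partial_t\,p\cR^{N}(\zeta_*+t\bar v)|_{t=0}$ stays $\ge\eps$ (the monotonicity built into \eqref{eqn-comp2}), whence $\|D\cR^{N}_{\zeta_*}\bar v\|\ge|D\cR^{N}_{\zeta_*}\bar v(0)|\ge|\lambda_{N}|^{-1}\eps$; since $|\lambda_N|\le C\theta^{N}$ with $\theta<1$ by the real {\it a priori} bounds (\propref{realbounds}), this gives $\|\cL^{2n}\bar v\|=\|D\cR^{2pn}_{\zeta_*}\bar v\|\ge c\,\eps\,\Theta^{2n}$ with $\Theta=\theta^{-p}>1$. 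As $\cC_{\zeta_*}\ne\emptyset$ (\propref{pr:non-empty}), $\cL$ already has spectral radius $>1$, so $E^{+}$ also contains a generalized eigenspace for an eigenvalue of modulus $>1$ and, being in direct sum with $V_0$, $\dim E^{+}\ge2$. Now if $V_0\cap\cC_{\zeta_*}\ne\emptyset$, choosing $\bar v$ in this intersection makes $\|\cL^{2n}\bar v\|$ grow geometrically, contradicting its at most polynomial growth on $V_0$; hence we may assume $V_0\cap\cC_{\zeta_*}=\emptyset$.

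This last case is the essential one, and I would resolve it by confronting the neutral subspace $V_0$ with the local stable set. By \thmref{stable manifold}, near $\zeta_*$ the set $\cD_\rho=\{\gamma\in\bB^{D,E}:\rho(\gamma)=\rho_*\}$ is a $C^{0}$-graph over a hyperplane, hence a topological submanifold of codimension one, obtained as the $C^{0}$-limit of the codimension-one analytic submanifolds $\cD_k$ from that proof; and by \propref{local stable}, $\cR^{pm}\to\zeta_*$ uniformly and at a geometric rate on $\cD_\rho$ near $\zeta_*$. On the other hand, the strong-stable manifold theorem applied to the analytic map $\cR^{p}$ at its fixed point $\zeta_*$ — using the spectral splitting of $\cL$ into the parts with $|\mu|>1$, $|\mu|=1$, $|\mu|<1$, the first two finite-dimensional — furnishes a local $\cR^{p}$-invariant strong-stable manifold $W^{\mathrm{ss}}$ tangent to $E^{-}$, of codimension $\dim E^{+}\ge2$, which contains every point near $\zeta_*$ whose forward orbit converges to $\zeta_*$ geometrically. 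Therefore $\cD_\rho\subset W^{\mathrm{ss}}$ near $\zeta_*$; but then the $\cD_k$, which $C^0$-converge to $\cD_\rho\subset W^{\mathrm{ss}}$, would eventually be codimension-one submanifolds whose tangent spaces lie inside the codimension-$\ge2$ tangent space of $W^{\mathrm{ss}}$ — impossible. This contradiction proves the proposition.

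The step I expect to be the main obstacle is this last one, on two counts: (i) establishing that $W^{\mathrm{ss}}$ really absorbs \emph{all} geometrically converging forward orbits near $\zeta_*$ — a center-manifold reduction, exploiting that the induced dynamics on the center manifold has spectral radius one and so cannot attract at a geometric rate; and (ii) passing from the inclusion $\cD_\rho\subset W^{\mathrm{ss}}$ to the dimension clash, which uses that $\cD_\rho$ is the limit of \emph{smooth} codimension-one submanifolds rather than merely a $C^{0}$-graph. An alternative route to the same end is to work directly with the graph: conjugating $\cR^{p}$ to the induced map $g$ on the hyperplane, \propref{local stable} forces $\|g^{m}(w)\|\le C\beta^{m}$ with $\beta<1$ uniformly on a neighborhood of $0$, and, once $g$ is related to the linearization $\cL$, this is incompatible with $\cL$ possessing a modulus-one eigenvalue.
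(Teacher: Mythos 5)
Your reductions at the start are fine (compactness of $\cL$, the finite-dimensional splitting, the geometric growth of cone vectors via the argument of \propref{expcone1}, and the disposal of the case $V_0\cap\cC_{\zeta_*}\neq\emptyset$), but the case you yourself call ``the essential one'' contains a genuine gap, and the argument you actually write down for it is a non sequitur. From $\cD_\rho\cap W\subset W^{\mathrm{ss}}$ you try to conclude via the approximating manifolds $\cD_k$: but $C^0$-convergence of the $\cD_k$ to a subset of $W^{\mathrm{ss}}$ gives no information whatsoever about their tangent spaces, and the $\cD_k$ themselves (pairs with rational rotation number $p_k/q_k$) are certainly not contained in $W^{\mathrm{ss}}$, so ``their tangent spaces eventually lie inside $TW^{\mathrm{ss}}$'' does not follow. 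To extract a contradiction from the inclusion you would instead have to show directly that a set which is a $C^0$-graph over an open subset of the hyperplane $T$ (equivalently, whose projection $p\circ\Psi$ covers an open set in $T$) cannot sit inside a $C^1$ Banach submanifold of codimension $\geq 2$; this is plausible (the projection restricted to $W^{\mathrm{ss}}$ has differential with image of codimension $\geq 1$ in $T$ at every point, so one is in Sard--Smale/Fredholm territory), but it is exactly the step you leave open, and it has real wrinkles here (non-separable model space $C^\omega(D)\times C^\omega(E)$, index bookkeeping). In addition, the absorption claim itself -- that every point of $\cD_\rho$ near $\zeta_*$ lies on the local strong-stable manifold because its orbit converges geometrically -- is only asserted: the pseudo-stable characterization requires the orbit to stay in the (small) neighborhood where $W^{\mathrm{ss}}$ is constructed, whereas \propref{local stable} gives a uniform constant $C$ that need not be small, so a priori you only get that a fixed high iterate $\cR^{pm_0}(\cD_\rho\cap W)$ lies in $W^{\mathrm{ss}}_{\mathrm{loc}}$, which no longer projects onto an open subset of $T$; and the rate from \thmref{thmGuarino} must be matched against the stable spectral radius by choosing the pseudo-stable rate $\beta$ close to $1$. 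None of this is done.

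For comparison, the paper's own proof is much more direct and avoids both difficulties: assuming a unit-circle eigenvalue, the Central Manifold Theorem produces a finite-dimensional smooth center manifold at $\zeta_*$, which would have to be transverse simultaneously to the codimension-one stable set $\cD_\rho$ (from \thmref{stable manifold}) and to the expanding cone $\cC_{\zeta_*}$, and this is ruled out by a dimension count. Your ingredients (the cone expansion and the codimension-one structure of $\cD_\rho$) are the right ones, but the route through strong-stable manifolds and the limiting argument with the $\cD_k$ does not close as written.
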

\begin{proof}
  Assume the contrary. By the spectral decomposition properties of the compact operator $\cL$, the tangent space decomposes into an $\cL$-ivariant direct sum $E^u\oplus E^c\oplus E^s$, where $E^u$ is the one-dimensional unstable eigenspace, $E^c$ is a finite-dimensional union of eigenspaces corresponding to neutral eigenvalues, and $E^s$ is the strong stable space of a finite codimension. The standard Central Manifold Theorem considerations can now be applied to $\cR^p$ at $\zeta_*$ (see e.g. \cite{BV} for the infinite-dimensional setting), which imply that
there exists a finite-dimensional smooth central manifold $W^c$ 
at $\zeta$. Now, $W^c$  is transverse simultaneously to $D_\rho$ at $\zeta$ and to the cone $\cC_\zeta$. This is clearly impossible by dimensionality considerations.

\end{proof}

\section{Extending renormalization to dissipative two-dimensional pairs} \label{sec:RenACM}

\subsection{Some function spaces}
Let $\zeta_*=(\eta_*,\xi_*)\in\bB^{D,E}$ be the hyperbolic fixed point of $\cR^p$ constructed in  \thmref{th:main1}.
We denote
$$\bC^{D,E}\equiv (C^\omega(D)\times C^\omega(E))^\RR\supset\bB^{D,E}.$$
%
We set $\Omega=D \times D$, $\Gamma=E \times E$, and  let $\bU^{\Omega, \Gamma}$ to be the space of pairs of maps
$$A:\Omega\to\CC^2,\; B:\Gamma\to \CC^2,$$
where $A$ and $B$ are both analytic and continuous up to the boundary, equipped with the norm
$$||(A,B)||=\frac{1}{2}(||A||+||B||),\text{ where }||.||\text{ stands for the uniform norm.}$$

For convenience, for a smooth function $F$ from a domain $W\subset\CC^2$ to $\CC^2$, we will adopt the notation
$$||F||_y=\underset{(x,y)\in W}{\operatorname{sup}}||\partial_yF(x,y)||.$$

We set
$$\bD^{\Omega, \Gamma}\equiv (\bU^{\Omega, \Gamma})^\RR;$$
so that $\bD^{\Omega, \Gamma}$ consists of pairs of real-symmetric two-dimensional maps. 
%
%
%
Let us define a ``diagonal'' isometric embedding $\iota$ of  the manifold $\bC^{D,E}$ into $\bD^{\Omega,\Gamma}$, which sends a pair $\zeta=(\eta,\xi)$ to
a pair of functions $\iota(\zeta)$ given by
$$\left(\left( x \atop y\right)\mapsto \left( \eta(x) \atop \eta(x)  \right), \left( x \atop y\right)\mapsto \left( \xi(x) \atop \xi(x)  \right)\right).$$
Let us denote $\pi_1$ and $\pi_2$ the two coordinate projections $\CC^2\to\CC$.
For a pair of two-dimensional maps $(A,B)(x,y)$ let us define 
$$\cL (A,B)(x,y)\equiv (\pi_1(A(x,0)),\pi_1(B(x,0)))=(a(x,0),b(x,0)).$$
In this way, we have
$$\cL\circ \iota\equiv \text{Id}.$$

The action of renormalization operator $\cR$ naturally extends to the ``diagonal'' subspace
$\iota(\bB^{D,E})$ as
$$\hat \cR\equiv \iota\circ \cR\circ\iota^{-1}.$$
Our goal is to further extend it to an analytic operator acting on a
small neighbourhood of this subspace in the space of two-dimensional maps.
For a choice of 
$\delta>0$, and $\eps>0$ (where we should think of $\eps$ as being much smaller than $\delta$),
we let $\bB^{D,E}_\delta$ denote a $\delta$-neighborhood of $\zeta_*$, and let
$\bD^{\Omega,\Gamma}_{\eps,\delta}$ be the $\eps$-neighborhood of $\iota(\bB_\delta^{D,E})$ in $\bD^{\Omega,\Gamma}$. In other words,
  a pair of maps $(A,B)$ in $\bD^{\Omega,\Gamma}_{\eps,\delta}$ has the form:
\begin{eqnarray}
\label{eq:AA} A(x,y)&=&(a(x,y),h(x,y))=(a_y(x),h_y(x)),\\
\label{eq:BB} B(x,y)&=&(b(x,y),g(x,y))=(b_y(x),g_y(x)), 
\end{eqnarray}  
where $a_y(x)$ and $h_y(x)$ are $\eps$-close to $\eta(x)$, and $b_y(x)$ and $g_y(x)$ are  $\eps$-close to $\xi(x)$ for all values of $y$,
where $(\eta,\xi)\in \bB^{D,E}_\delta$.

In what follows, we will demonstrate that there exists  $\eps>0$, and $n_0\in\NN$ such that for every $n\geq n_0$ which is the multiple of $p$, the operator $\hat \cR^n$ extends to an analytic operator defined in $\bD^{\Omega,\Gamma}_{\eps,\delta}$ which has the same hyperbolic properties as the one-dimensional version. The definition of this extension to two-dimensional perturbations is somewhat involved. In brief, it consists of the following steps:
\begin{enumerate}
\item pre-renormalization will now be defined not in a neighborhood of the ``critical point'' $(0,0)$ but in the neighborhood of the point $(\eta^{-1}(0),0)$, where $\eta=a_0(x).$ It is then pulled back to the neighborhood of the origin by a non-linear coordinate change, which is a small perturbation of $\eta$. This results in:
\item reduction of the order of the perturbation: similarly to \cite{dCLM}, the non-linearly rescaled pre-renormalization is in the $\eps^2$-neighborhood of the diagonal subspace $\iota(\bC^{D,E})$. However, it does not have a well-defined projection onto an element  of $\iota(\bB^{D,E})$, which we further rectify:
  \item by defining a projection from general two-dimensional pairs $(A,B)$ onto pairs $(\tl A,\tl B)$ such that $\cL(\tl A,\tl B)\in \bB^{D,E}.$ This projection is not dynamical, however, crucially for our applications, it does not affect the pairs $(A,B)$ which actually commute; in particular, when $A=F^{q_{n+1}}$ and $B=F^{q_n}$ are iterates of the same map.

 \end{enumerate}
\noindent
We now proceeed with the construction.

\subsection{Definition of pre-renormalization and non-linear change of coordinates}



Let $n\geq 3$ be a multiple of $p$, and 
let $\zeta \in \bB^{D,E}$ be $n$-times renormalizable,
$$\cR^n\zeta= \lambda_n^{-1}\circ \left(\zeta^{\bar s_n}, \zeta^{\bar t_n } \right) \circ \lambda_n,$$
where $\bar s_n$ and $\bar t_n$ are as in (\ref{sntn}).

Let $U_1\Supset U_2\Supset (D\cup E)$ be two compactly nested topologicals disks, the smaller of which compactly containes the union of the domains of definition of the elements of $\zeta_*$.

In what follows, 
we fix $n=p\cdot k\geq 2$, $\delta>0$ in such a way that for all $\zeta\in \bB_{2\delta}^{D,E}$, we have:
\begin{itemize}
\item the function $\eta^{-1}$ is a diffeomorphism of $\lambda_n(U_1)$ onto its image (which is a neighborhood of $\eta^{-1}(0)$.

\end{itemize}
\noindent

Let $\bar s_n=(a_1,b_1,\ldots,a_{m_n},b_{m_n})$, note that $b_{m_n}=0$, and denote 
\begin{eqnarray}
\nonumber \h s_n&=&\left\{ (a_1,b_1,a_2,b_2,\ldots,a_{m_n}-2,0), \ a_{m_n} \ge 2  \atop (a_1,b_1,a_2,b_2,\ldots,b_{m_{n-1}}-1,0,0), \ a_{m_n}=1 \right. ,\\
\nonumber \phi&=&\left\{ \eta^2, \ a_{m_n} \ge 2  \atop \eta \circ \xi, \ a_{m_n}=1 \right. .
\end{eqnarray}
Define $\h t_n$ in a similar way. Then $\cR^n\zeta$ can be written as
$$\cR^n\zeta=(\lambda_n^{-1}\circ \phi \circ \zeta^{\h s_n} \circ \lambda_n,\lambda_n^{-1}\circ \phi \circ  \zeta^{\h t_n }  \circ \lambda_n).$$

\noindent
Let us apply the diffeomorphic change of coordinates $\eta^{-1}$ to  $p\cR^n\zeta$ to obtain a pre-renormalization in a neighborhood of $\eta^{-1}(0)$:
\begin{equation}\label{preren-crit}
\h p \cR^n\zeta=\left(\eta^{-1} \circ  \zeta^{\bar s_n} \circ \eta, \eta^{-1} \circ \zeta^{\bar t_n } \circ \eta \right)=\left( f \circ   \zeta^{\h s_n} \circ \eta, f \circ \zeta^{\h t_n } \circ \eta \right),
\end{equation}
where 
\begin{equation}
\label{eq:def-f}
f=\eta\text{ if }a_{m_n} \ge 2\text{ and }f=\xi\text{ if }a_{m_n}=1.
\end{equation}


Now, let $\eps<\delta$ and let  \begin{equation}
  \label{Z-eq}
  Z=(A,B)\in\bD^{\Omega,\Gamma}_{\eps,\delta},\text{ and }\zeta=\cL(Z)\in\bB_{2\delta}^{D,E},\; ||Z-\iota(\zeta)||=O(\eps).
  \end{equation}
Set $$\Lambda_n(x,y)\equiv (\lambda_nx,\lambda_ny).$$
In an analogous fashion to (\ref{preren-crit}), we set
\begin{equation}
\label{preren-crit1}
\h p \cR^n Z = \left(F \circ Z^{\h s_n} \circ A, F \circ Z^{\h t_n} \circ A \right),
\end{equation}
where $F=A$ if $a_{m_n} \ge 2$ and $F=B$ if $a_{m_n}=1$.

Let us set
$$\phi_1(x)\equiv \left\{ \pi_1 A^2(x,0)= a(a(x,0),h(x,0)), \ a_{m_n} \ge 2  \atop \pi_1 A \circ B(x,0)= a(b(x,0),g(x,0)), \ a_{m_n}=1 \right.,$$
and
$$f_2(x)\equiv \pi_2 F(x,0)= \left\{ h_0(x), \ a_{m_n} \ge 2  \atop g_0(x), \ a_{m_n}=1 \right.$$
We now define a pair of maps: 
\begin{equation}
\label{wvtransform}V(x,y) :=  \left(\begin{array}{c}a_y(x)\\y\end{array}\right)\text{, and }W(x,y) := \left(\begin{array}{c}x\\ \phi_1(f_2^{-1}(y)) \end{array}\right),
\end{equation}
Let us set
\begin{equation}
  \label{coordh}
  H\equiv W\circ V.
\end{equation}
  By considerations of continuity we immediately have:
\begin{prop}
  \label{choiceeps1}
  there exists $\eps_1>0\in(0,2\delta)$ such that for every $\eps\in(0,\eps_1)$, the map $H$ as defined above is a diffeomorphism of
  $\eta_*^{-1}(\lambda_n(U_2))\times \eta_*^{-1}(\lambda_n(U_2))$ onto its image.
\end{prop}
Observe that 
$$A\circ V^{-1}(x,y)=\left(\begin{array}{c}x\\ h(a_y^{-1}(x),y) \end{array}\right),$$
and hence
\begin{equation}
  \label{boundv}
||A\circ V^{-1}||_y=O(\eps).
\end{equation}
Similarly,
\begin{equation}
  \label{boundw}
||W\circ V\circ F-\iota(\phi_1(x))||=O(\eps).
\end{equation}
We  define the $n$-th pre-renormalization of $Z=(A,B)$ as the pair
\begin{equation}
  \label{preren-eq}
  p\cR^nZ=p \cR^n Z=(\bar A, \bar B) = H \circ F \circ \left( Z^{\h s_n},  Z^{\h t_n} \right) \circ A \circ H^{-1}(x,y),
  \end{equation}
and set
\begin{equation}
  \label{lambdan}
  \Lambda_n(x,y)=(\ell_nx,\ell_ny),\text{ where }\ell_n=\pi_1\bar B(0,0).
  \end{equation}
\noindent

\begin{prop}
  \label{lem-preren} There exists $\eps_2\in(0,\eps_1)$ such that for every $\eps\in(0,\eps_2)$ the following holds.
  The pre-renormalization $p\cR^n(Z)$ is a pair of analytic mappings defined in domains $\Lambda_n(\Omega),\;\Lambda_n(\Gamma)$ respectively, such that
  \begin{equation}
    \label{prereneq1}
 ||p\cR^n(Z)-\iota(p\cR^n\zeta)||=O(\eps),
  \end{equation}
 where $\zeta$ is as in (\ref{Z-eq}),  and
  \begin{equation}
    \label{prereneq2}
||p\cR^n(Z)||_y=O(\eps^2),
  \end{equation}
in these domains.
\end{prop}
\begin{proof}
  The bound (\ref{prereneq1}) follows for all sufficiently small $\eps$ from (\ref{boundw}) and straightforward continuity considerations.
  To obtain the second bound, note that by  (\ref{boundv}), and since the matrix $DW$ is diagonal, the differential
  $D(A\circ H^{-1})$ has the form
  $$D(A\circ H^{-1})=\left[ \begin{array}{cc} O(1) &0 \\ O(1)& O(\eps) \end{array}\right] .$$
  The differential of the remainder of the composition is (since it is an $\eps$-small perturbation of a ``diagonal'' function of $x$) of the form:
  $$\left[ \begin{array}{cc} O(1) & O(\eps) \\ O(1)& O(\eps) \end{array}\right].$$
  The claim immediately follows.
  \end{proof}

\subsection{Projection on the space of  almost commuting pairs}
Let us set $$\tilde Z= (\tilde A, \tilde B)\equiv \Lambda_n^{-1}\circ p\cR^n Z\Lambda_n.$$
In view of the above, it is a small (of order $\eps^2$) perturbation of the ``diagonal'' pair $\iota\circ\tl\zeta$, where
$$\tl\zeta=(\tl\eta,\tl\xi)\equiv \cL(\tilde Z).$$
There is, of course, no reason for the almost commutation condition to hold for $\tl\zeta$. This would create new ustable directions for  renormalization,
so our next step is to define a {\it projection} which imposes
such a condition onto almost diagonal pairs.

To that end, we set 
$$\Pi (\tilde A,\tilde B)(x,y)=(\tl A,\tl B)+\left( \left( a x^4+b x^6  \atop   \tilde a x^4+b x^6\right), \left( c + d x + e x^2  \atop  c+d x +e x^2   \right)   \right),$$
and require that the pair $(\hat A,\hat B)\equiv \Pi (\tilde A,\tilde B)(x,y)$ satisfies the following two-dimensional version of almost commutation conditions:
\begin{eqnarray} \label{commutation}
\pi_1(\hat A \circ  \hat B(x,0) -  \hat B \circ  \hat A(x,0))&=&o(|x|^3),\\
\label{normalization} \pi_1 \hat B(0,0)&=&1.
\end{eqnarray}
We claim:
\begin{prop}\label{prop:2Dprojection}

  There exist $\eps_3\in(0,\eps_2)$, $L>0$, such that for all $\eps\in(0,\eps_3)$ the following holds. For every pair
  $(\tl A,\tl B)\in \bD^{\Omega,\Gamma}_{\eps,\delta}$ there exists a unique tuple
 $(a,b,c,d,e)\in \DD_{L \eps^2}(0)^{\otimes 5}$ such that the conditions (\ref{commutation})-(\ref{normalization}) hold. Furthermore, the 
map $$(\tl A, \tl B) \mapsto (a,b,d,e,c)$$ is analytic.

\end{prop}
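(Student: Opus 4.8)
The plan is to read off $(a,b,c,d,e)$ from the analytic implicit function theorem, applied to the system of five scalar equations that express $(\ref{commutation})$--$(\ref{normalization})$ for the corrected pair. First I would make the defect explicit: for $(A,B)$ in a neighbourhood of $\iota(\bB^{D,E})$ in $\hat\bO^{\Omega,\Gamma}$, let $(\hat A,\hat B)$ be the image of $(A,B)$ under $\Pi$ with the given correction coefficients $(a,b,c,d,e)$, and put
$$\mathcal O(a,b,c,d,e;A,B)=(\phi_0,\phi_1,\phi_2,\phi_3,\phi_{\mathrm{nm}})\in\CC^5,$$
where $\phi_j$ is the coefficient of $x^j$ in the Taylor expansion at $0$ of $\pi_1\big(\hat A\circ\hat B(\cdot,0)-\hat B\circ\hat A(\cdot,0)\big)$ for $j=0,1,2,3$ and $\phi_{\mathrm{nm}}=\pi_1\hat B(0,0)-1$. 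Conditions $(\ref{commutation})$--$(\ref{normalization})$ are precisely $\mathcal O=0$. As taking a finite jet at a point is a bounded linear operation on spaces of bounded holomorphic germs and composition of bounded holomorphic maps is analytic in its entries, $\mathcal O$ is analytic in all of its variables. Moreover, when $(A,B)=\iota(\zeta)$ for an honest almost commuting pair $\zeta=(\eta,\xi)\in\bB^{D,E}$, the coordinate functions of each map of $\iota(\zeta)$ coincide, the first map restricting on $\{y=0\}$ to $\eta$ and the second to $\xi$, so the almost-commutation and normalization of $\zeta$ force $\mathcal O(0,0,0,0,0;\iota(\zeta))=0$.

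The heart of the matter is that $D_{(a,b,c,d,e)}\mathcal O$ at $\big((0,0,0,0,0);\iota(\zeta)\big)$ is an isomorphism of $\CC^5$; this is the two-dimensional counterpart of the determinant computation in the proof of \propref{Banach1}. Computing the $5\times5$ matrix through $(\ref{eqn-comp})$, one sees that the monomials in $\Pi$ are chosen to make it near-triangular. Adding $ax^4+bx^6$ to the first coordinate leaves the $3$-jet of $\eta$ at $0$ untouched, changes $\phi_0$ by $a\,\xi(0)^4+b\,\xi(0)^6=a+b$ (recall $\xi(0)=1$) and $\phi_3$ by $(4a+6b)\,\xi'''(0)$, and affects neither $\phi_1$ nor $\phi_2$. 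Adding $c+dx+ex^2$ to the second coordinate and invoking the relations $\eta'(0)=\eta''(0)=\xi'(0)=\xi''(0)=0$ satisfied by $\zeta$, one finds that $\phi_1,\phi_2,\phi_{\mathrm{nm}}$ do not depend on $a,b$, that $d$ is the only parameter entering $\phi_1$ (with coefficient $\eta'(\xi(0))$), $e$ the only one entering $\phi_2$ (coefficient $2\eta'(\xi(0))$), and $c$ the only one entering $\phi_{\mathrm{nm}}$ (coefficient $1$). Ordering the rows as $(\phi_0,\phi_3,\phi_1,\phi_2,\phi_{\mathrm{nm}})$ and the columns as $(a,b,c,d,e)$, the matrix becomes block upper triangular with diagonal blocks
$$\begin{pmatrix}1 & 1\\ 4\xi'''(0) & 6\xi'''(0)\end{pmatrix}\qquad\text{and}\qquad\begin{pmatrix}0 & \eta'(\xi(0)) & 0\\ 0 & 0 & 2\eta'(\xi(0))\\ 1 & 0 & 0\end{pmatrix},$$
so $\det D_{(a,b,c,d,e)}\mathcal O=\pm\,4\,\big(\eta'(\xi(0))\big)^2\,\xi'''(0)$, which is nonzero since $\xi'''(0)\ne0$ by the cubic critical point condition and $\eta'(\xi(0))\ne0$ because $\xi(0)=1\ne0$ lies off the unique critical point of $\eta$.

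With this in hand, the analytic implicit function theorem on Banach manifolds gives, in a neighbourhood of each $\iota(\zeta)$ with $\zeta\in\bB^{D,E}$, a unique small solution $(a,b,c,d,e)=(a,b,c,d,e)(A,B)$ depending analytically on $(A,B)$ and vanishing at $\iota(\zeta)$. By the Koebe distortion estimate, as in \thmref{invariance2}, $\bB^{D,E}$ is compact in the weaker $C^3$-topology, so the radius of these neighbourhoods and the bound on $\|(D_{(a,b,c,d,e)}\mathcal O)^{-1}\|$ can be taken uniform along $\iota(\bB^{D,E})$, yielding a single $\eps>0$ and a single $L>0$ valid on all of $\iota(\bB^{D,E})_\eps$. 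The quantitative form of the theorem then bounds $\|(a,b,c,d,e)(A,B)\|$ by a fixed multiple of the defect $\|\mathcal O(0,0,0,0,0;A,B)\|$ of the uncorrected pair, which is $O(\eps^2)$ for the pairs occurring here — this is where one uses, via \lemref{lem-preren}, that on $\iota(\bB^{D,E})_\eps$ the two coordinate-components of the maps agree to within $\eps$ — giving $(a,b,c,d,e)\in\DD_{L\eps^2}(0)^{\otimes5}$, with uniqueness inside the polydisk inherited from the theorem. The step I expect to be the real obstacle is the linearization computation: one must check with care that the critical relations of $\zeta$ force exactly the block-triangular pattern above, so that the determinant is visibly a nonzero multiple of $\big(\eta'(\xi(0))\big)^2\,\xi'''(0)$; the rest is routine soft analysis.
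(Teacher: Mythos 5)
Your core argument is the same as the paper's: you set up the five scalar conditions (the $0$-th through $3$-rd order Taylor data of $\pi_1(\hat A\circ\hat B-\hat B\circ\hat A)(\cdot,0)$ plus the normalization), differentiate in $(a,b,c,d,e)$, and invoke the analytic implicit function theorem. The paper writes this system out explicitly as $\bF(a,b,d,e,c)=0$ and computes the Jacobian at a general point of $\iota(\bB^{D,E})_\eps$, keeping the small error terms $\vareps_1,\nu_1,\alpha_1,\beta_1,\delta_i$, whereas you evaluate exactly at $\iota(\zeta)$, where the criticality relations $\eta'(0)=\eta''(0)=\xi'(0)=\xi''(0)=0$ kill those entries and produce your block-triangular form; both computations give a determinant that is (close to) $4\,(\eta'(\xi(0)))^2\,\xi'''(0)\neq 0$, and your compactness argument for uniform $\eps$ matches the paper's use of the Regular Value Theorem. (Your entries mix the Taylor-coefficient and $n$-th derivative normalizations of the rows -- e.g.\ $2\eta'(\xi(0))$ versus $\eta'(\xi(0))$ in the $(\phi_2,e)$ slot -- but this only rescales rows and does not affect invertibility.)

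The one step of yours that does not work as written is the justification of the radius $L\eps^2$. You argue that the defect of the uncorrected pair is $O(\eps^2)$ ``because the two coordinate-components agree to within $\eps$''; this inference is false. For instance, the pair with $A(x,y)=(\eta(x)+\eps,\eta(x)+\eps)$ and $B=\iota$-image of $\xi$ has identical coordinate components and lies in $\iota(\bB^{D,E})_\eps$, yet its commutator defect at order zero is $\eps\,(1-\xi'(\eta(0)))+O(\eps^2)$, which is genuinely of order $\eps$; the implicit function theorem bound $\|(a,b,c,d,e)\|\le C\,\|\text{defect}\|$ then only yields a disk of radius $O(\eps)$ for arbitrary elements of $\iota(\bB^{D,E})_\eps$. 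To get the quadratic smallness one has to use the structure of the pairs to which $\Pi$ is actually applied -- after the coordinate change $H$ the $y$-dependent parts $\bar\tau_i,\bar\pi_i$ are $O(\eps^2)$ and the one-dimensional slice is close to an honest almost commuting pair, which is where the paper's remark that the bracketed terms have norm $O(\eps^2)\cdot\max\{c,d,e\}$ comes from (and even the paper leaves this quantitative point largely implicit). So your proof establishes existence, uniqueness and analyticity of the tuple in an $O(\eps)$ polydisk, but not the $\DD_{L\eps^2}$ claim as you stated it.
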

The proof of \propref{prop:2Dprojection} is carried out by a brute force Regular Value Theorem argument based on calculating the differential of the system of non-linear equations given by the above conditions. To streamline the text, we give it in the Appendix \S~\ref{Apx1} 
For ease of reference, let us note that, by the uniqueness part of the statement of \propref{prop:2Dprojection}:
\begin{prop}
  \label{trivialproj}
  Suppose $\eps\in(0,\eps_3)$ and $(\hat A,\hat B)\in \bD^{\Omega,\Gamma}_{\eps,\delta}$. Assume that the conditions
(\ref{commutation})-(\ref{normalization}) hold for it.
  Then $\Pi(\hat A,\hat B)=(\hat A,\hat B)$.
  \end{prop}

\subsection{Renormalization of two-dimensional pairs}

\noindent
We let $\eps\in(0,\eps_3)$, and define the {\it order $n$ renormalization} of a pair $(A,B) \in   \bD_{\eps,\delta}^{\Omega,\Gamma}$ as
\begin{equation}\label{2Drenorm}
\hat\cR_n(A,B)=\Pi \Lambda^{-1}_n \circ p \cR^n(A,B) \circ \Lambda_n.
\end{equation}



\noindent
By construction, we have:
\begin{thm} \label{thm:renbounds}
  There exists $\eps_4\in(0,\eps_3)$ such that for $\eps\in(0,\eps_4)$, 
  $$\hat\cR_n:\bD_{\eps,\delta}^{\Omega,\Gamma}\to\bD^{\Omega,\Gamma},$$
  and is an analytic operator. Furthermore, 
  $$\hat\cR_n=\iota\circ \cR^n\circ\iota^{-1}$$
  on $\iota(\bB^{D,E}_\delta)$.
  
Additionally, if  $Z\in \bD_{\eps,\delta}^{\Omega,\Gamma}$ does not depend on $y$ then  
$\hat \cR_nZ\in \iota(\bC^{D,E}).$
\end{thm}

\noindent
Denote $Z_*=\iota(\zeta_*)$; it is a fixed point of $\hat\cR_n$. 
We have:
\begin{thm}
\label{hyperbolicity-thm2}
The differential $\cD=D|_{Z_*}\hat\cR_n$ is a compact operator. The non-trivial part of its spectrum corresponds to one-dimensional ``diagonal'' maps:
all of the
eigenspaces corresponding to non-zero eigenvectors lie in the tangent bundle to  $\iota(\bC^{D,E})$.

Its strong stable manifold has codimension at most $3$. Its spectrum coincides with the spectrum of the differential of one-dimensional renormalization  $D(\cR^n|_{\zeta_*})$ plus at most two more   eigenvalues. 
\end{thm}
\begin{proof}  
  By \propref{lem-preren}, for each small $\eps$ and $Z\in \bD^{\Omega,\Gamma}_{\eps,\delta}$, the distance from the rescaled pre-renormalization
  $$\tl Z=\Lambda^{-1}_n \circ p\cR^n\circ\Lambda_n(Z)$$ to $\iota(\bC^{D,E})$ is of the order $\eps^2$. By \propref{prop:2Dprojection} (analyticity of the projection $\Pi$),
  the same holds true for $\hat Z=\Pi \tl Z$. This,  \propref{lem-preren}, and the one-dimensional \thmref{th:main1}, imply that the operator $\cD$ is compact, and all of its
  non-zero eigenspaces lie inside the tangent bundle to $\iota(\bC^{D,E})$.

  The image of $\hat\cR_n$ in the one-dimensional subspace $\iota(\bC^{D,E})$ contains pairs for which the almost commutation condition holds, but which may not have a critical point of order $3$ at the origin. Clearly, almost commuting pairs $\bB^{D,E}$ have codimension $2$ in this space. By \thmref{th:main1},
the stable bundle of the operator $\cD$ restricted to the tangent bundle of $\iota(\bB^{D,E})$ has codimension $1$ -- together with the above, it gives the required bound.

\end{proof}

\section{Critical attractors of dissipative maps}\label{attractor}
As before, let $\cR^p(\zeta_*)=\zeta_*$.
Fix $\rho_*\equiv \rho(\zeta_*)\in(0,1)\setminus \QQ$. Set $T_a(x)\equiv x+a$, and
$$T_*\equiv (T_{\rho_*}|_{[-1,0]},T_{-1}|_{[0,\rho_*]}).$$
The main result of this section is the following theorem:
\begin{thm}
\label{th:attractor}
Let $\zeta_*=\cR^p(\zeta_*)$ be as above and let 
$$Z_*=(A_*,B_*)=\iota(\zeta_*)\in\bD^{\Omega,\Gamma}_{\eps,\delta}.$$
Suppose $Z=(A,B)\in W^s_{\text{loc}}(Z_*)\subset\bD^{\Omega,\Gamma}_{\eps,\delta},$ and suppose that maps $A$ and $B$ commute, that is $A\circ B=B\circ A$,
where defined (for instance, $A=F^{q_{n+1}},\; B=F^{q_n}$ could be iterates of the same map).

Then $Z$ has a minimal attractor $\Sigma$ in 
$\Omega\cup\Gamma$. The attractor $\Sigma$ is a Jordan arc, and the restriction $Z|_\Sigma$ is topologically but not smoothly conjugate to $T_*$.
\end{thm}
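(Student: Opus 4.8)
\emph{Proof sketch.} The plan is to transfer the one--dimensional picture of Corollary~\ref{cor:partition} to the two--dimensional map defined by $Z$: build an adapted dynamical partition into thin topological bidisks, extract $\Sigma$ as the graph of a continuous function (hence a Jordan arc), and identify the dynamics on it. The heart of the matter is the first step --- producing \emph{two--dimensional a priori bounds} in which the partition elements have geometrically small extent in the $y$--direction; this is the genuinely two--dimensional, H\'enon--type ingredient, and it is exactly the commutation hypothesis (which makes the glued map a genuine dynamical system) and the stable--manifold hypothesis (which makes $\cR^{nj}Z\to Z_*$ with decaying $y$--dependence, by \thmref{thm:renbounds}--\thmref{hyperbolicity-thm2}) that make it available. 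Granting that, the remaining two steps are formal consequences of the partition structure together with classical one--dimensional theory (\cite{Yoc} and the real \emph{a priori} bounds). In more detail:

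\emph{Two--dimensional a priori bounds.} Since $Z\in W^s_{\mathrm{loc}}(Z_*)$, \thmref{hyperbolicity-thm2} gives $Z_j:=\cR^{nj}Z\to Z_*$ at a geometric rate, $n$ being the renormalization period of \thmref{thm:renbounds}. Hence the one--dimensional factors $\cL(Z_j)$ converge geometrically to $\cL(Z_*)=\zeta_*$, and the $y$--dependence of the $Z_j$, measured by $\|\pi_1 Z_j-\pi_2 Z_j\|$, is $O(\theta^{\,j})$ for a universal $\theta<1$; in particular the $\cL(Z_j)$ satisfy the complex bounds of \thmref{complex bounds} with uniform constants. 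Because $A$ and $B$ commute, their analytic continuations iterate coherently exactly as in the one--dimensional case, so $Z$ defines a well--defined map $\Phi=\Phi_Z$ on the annulus obtained by gluing the two domains of the pair --- the two--dimensional analogue of $F_\zeta$. Pulling back the domains of $p\cR^n Z$ by the branches $Z^{\bar w}$, $\bar w\prec\bar s_n$ (resp. $\bar w\prec\bar t_n$), yields the \emph{$n$--th two--dimensional dynamical partition} $\widehat{\cV}_n$, a finite family of topological bidisks with $\widehat{\cV}_{n+1}$ refining $\widehat{\cV}_n$; by the bounded geometry of the $Z_j$ and the Koebe Distortion Theorem the inverse branches $Z^{-\bar w}$ have uniformly bounded distortion. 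This gives the two--dimensional counterpart of \corref{cor:partition}: each $H\in\widehat{\cV}_n$ is a ``horizontal'' neighbourhood of the corresponding interval of $\cP_n$, with $\diam H=O(\gamma^n)$ and neighbouring elements $K$--commensurate, and --- using $\|\pi_1 Z_j-\pi_2 Z_j\|=O(\theta^{\,j})$ together with the definite dissipation --- the $y$--extent of $H$ is also $O(\gamma^n)$ and $H$ meets every vertical line $\{x=x_0\}$ in an interval.

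\emph{$\Sigma$ is a Jordan arc.} Set $\Sigma:=\bigcap_n\bigcup_{H\in\widehat{\cV}_n}H$, intersected with $((D\cup E)\cap\RR)\times\RR$; since the $\widehat{\cV}_n$ are nested and exhaust the non--escaping set of $\Phi$, $\Sigma$ is precisely the maximal $\Phi$--invariant set $\bigcap_{j\ge0}\Phi^{\,j}(\mathrm{dom}\,\Phi)$. For $x_0$ in $X:=\pi_1(\Sigma)$ the elements of $\widehat{\cV}_n$ over $x_0$ form a nested column of total $y$--length $O(\gamma^n)$, so $\Sigma\cap\{x=x_0\}$ is a single point $(x_0,s(x_0))$; thus $\Sigma$ is the graph of a function $s\colon X\to\RR$, continuous by the $K$--commensurability and $\diam\to0$, and $\pi_1\colon\Sigma\to X$ is a homeomorphism. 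Applying the bounded--distortion microscope $Z^{\bar w}$ to the attractor of $Z_*=\iota(\zeta_*)$ --- which is the analytic diagonal circle $\{(x,x)\}$ --- one sees that inside each element of $\widehat{\cV}_n$ the set $\Sigma$ is a graph over the full $x$--range of that element; since these ranges cover the one--dimensional circle, $X$ is the whole circle (equivalently, the one--dimensional factor dynamics has no wandering intervals, just as for critical circle maps \cite{Yoc}). Hence $\Sigma$ is homeomorphic to a circle --- a Jordan arc, closing up to an embedded circle under the identification of the pair --- and it is the attractor of $Z$.

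\emph{The dynamics on $\Sigma$.} Via $\pi_1\colon\Sigma\to X$, the map $\Phi|_\Sigma$ is conjugate to the circle homeomorphism $\bar f(x)=\pi_1\Phi(x,s(x))$; by construction $\cR^{nj}\bar f$ is the first--coordinate dynamics of $\cR^{nj}Z$ on its attractor $\Sigma_j$ (a rescaled copy of $\Sigma$), which converges, as $\cR^{nj}Z\to\iota(\zeta_*)$, to the first--coordinate dynamics of $\iota(\zeta_*)$ on its diagonal attractor, namely to $\zeta_*$ --- the convergence being in $C^1$ on the real line. In particular $\bar f$ has the combinatorics recorded by the partitions $\cP_n$, so $\rho(\bar f)=\rho_*$; together with the absence of wandering intervals, Poincar\'e's classification gives that $\bar f$ is topologically conjugate to $R_{\rho_*}$. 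Since $F_{T_*}=R_{\rho_*}$, the pair $Z$ restricted to $\Sigma$ is topologically conjugate to $T_*$, and minimal because $\rho_*$ is irrational. Finally, this conjugacy cannot be $C^1$: if it were, $\bar f$ would be $C^1$--conjugate to the rotation $R_{\rho_*}$, hence a $C^1$ diffeomorphism with $(\bar f^{\,q})'\ge c>0$ for all $q\ge1$; but the $\eta$--component of $\cR^{nj}\bar f$ is a rescaled restriction of some iterate $\bar f^{\,q_j}$, $q_j\to\infty$, so its derivative at the (renormalized) base point equals $(\bar f^{\,q_j})'(x_0)$ for a point $x_0$ and, by $C^1$--convergence $\cR^{nj}\bar f\to\zeta_*$, tends to $\eta_*'(0)=0$ --- contradicting the bound $c$. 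Therefore $Z|_\Sigma$ is topologically but not smoothly conjugate to $T_*$. $\Box$
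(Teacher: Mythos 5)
Your overall strategy (use $\cR^{nj}Z\to Z_*$ to build nested, geometrically shrinking two--dimensional partitions, then read off the arc and the conjugacy) is the same as the paper's, but the step you label ``two--dimensional a priori bounds'' --- which you yourself identify as the heart of the matter --- is not actually proved in your sketch, and the tools you invoke for it do not apply. You define the partition elements as pullbacks of the domains by the long compositions $Z^{\bar w}$ and claim uniformly bounded distortion ``by the bounded geometry of the $Z_j$ and the Koebe Distortion Theorem.'' Koebe is a one--dimensional, conformal statement; the branches here are compositions of strongly dissipative two--dimensional maps, and there is no distortion theorem for them. Moreover, transferring the geometric bounds from the renormalizations $\cR^{nj}Z$ back to the long compositions $Z^{\bar w}$ is not a formality, because the two--dimensional renormalization (\ref{2Drenorm}) involves the nonlinear changes of variable $H$ at every step, so $\cR^{nj}Z$ is \emph{not} simply a rescaling of an iterate of $Z$. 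The paper handles precisely this point with the renormalization microscope: each partition element is exhibited as the image of a fixed bidisk $\Upsilon^i$ under $\Psi^Z_{\bar w^0}\circ\Psi^{\cRG Z}_{\bar w^1}\circ\cdots\circ\Psi^{\cRG^{k-1}Z}_{\bar w^{k-1}}$, where $\Psi^{\cRG^l Z}_{\bar w}=(\cRG^l Z)^{\bar w}\circ L_{\cRG^l Z}$; an inductive lemma (this is exactly where the hypothesis $A\circ B=B\circ A$ is used, to rearrange the compositions through the coordinate changes $L$) shows these images are the partition elements, and the contraction $\|D\Psi^{\cR^l Z}_{\bar w}\|_\infty<\tfrac12$ follows from the $C^1$--convergence $\Psi^{\cR^l Z}_{\bar w}\to(\psi^{\zeta_*}_{\bar w},\psi^{\zeta_*}_{\bar w})$ together with part (3) of \corref{cor:partition}. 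Without this mechanism (or a genuine substitute) the geometric shrinking of the two--dimensional partition --- hence everything that follows --- is unsupported.

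Secondary issues: your route to the topology and dynamics of $\Sigma$ (graph over the circle, then Yoccoz/no--wandering--intervals plus Poincar\'e classification applied to the induced map $\bar f(x)=\pi_1\Phi(x,s(x))$) is shakier than needed, since $\bar f$ is a priori only continuous, so Yoccoz's theorem does not apply to it; and the graph property near the critical value requires care. The paper avoids both points by constructing the conjugacy directly: choosing one point in each partition element $Q^i_{\bar w}$ and mapping the corresponding element of the dynamical partition of the model pair $T_*$ to it gives piecewise--constant maps $\varphi_l$ that converge uniformly (by the microscope contraction) to a homeomorphism $\varphi$ of $[-1,\rho_*]$ onto its image with $\varphi\circ T_*=Z\circ\varphi$; this yields the Jordan arc, the minimality, and the topological conjugacy in one stroke, with non--smoothness then following from the critical point of $\zeta_*$ exactly as in your last step (which is fine).
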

\begin{proof}

Below, we will denote $\Upsilon^1=\Omega,  \Upsilon^2=\Gamma$.
As in the previous section, $\hat\cR_n$ will denote the extension of $\cR^n$ to two-dimensional dissipative maps for some
$n=p m$  sufficiently large (how large will be fixed later).
For notational simplicity, we set
 $$\cRG=\hat\cR_n.$$

To differentiate the changes of coordiates corresponding to different pairs, given a pair $Z$, denote $\Lambda_Z$ the linear rescaling (\ref{lambdan})  in the definition of  $\hat\cR_n Z$, and $H_Z$ the non-linear change of coordinates (\ref{coordh}). By \propref{trivialproj},
$$\cRG Z=
\Lambda_Z^{-1}\circ H_Z \circ \hat p \cR^n Z \circ H_Z^{-1} \circ \Lambda_Z$$
with $\hat p\cR^n$ defined in (\ref{preren-crit1}).
Again by \propref{trivialproj}, for $l\in\NN$, we have:
$$\cRG^l Z =L_{\cRG^{l-1}Z}^{-1} \circ \ldots \circ L_Z^{-1} \circ \hat p \cR^{ ln} Z \circ L_{Z} \circ \ldots \circ L_{\cRG^{l-1} Z},$$
where
$$L_Z\equiv  H_Z^{-1} \circ \Lambda_Z.$$

Let $\bar s_n^l$ and $\bar t_n^l$ be defined by
$$(\hat p\cR^n)^l\zeta_*=(\zeta_*^{\bar s_n^l},\zeta_*^{\bar t_n^l}),$$
where $\hat p\cR^n\zeta_*$ is as in (\ref{preren-crit}).
For each  multi-index $\bar w=(a_0,b_0,a_1,b_1, \ldots, a_k,b_k) \prec \bar s_n^l$ or $\bar w=(a_1,b_1, \ldots, a_k,b_k) \prec \bar t_n^l$ we define a domain
\begin{equation} \label{Qiw}
  Q_{\bar w}^i=Z^{\bar w} \circ  L_Z \circ L_{\cRG  Z} \circ \ldots \circ L_{\cRG^{l-1} Z}(\Upsilon^i), \ i=1 \text{ for }\bar w\prec\bar s_n^l, \ i=2 \text{ for }\bar w\prec\bar t_n^l.
\end{equation}


\noindent
By analogy with a dynamical partition of a commuting pair, the collection 
$$\cQ_{ln}\equiv \{ Q_{\bar w}^i\}$$ will be refered to as the $l n$-th partition for the two-dimensional  pair $Z$.

Given $Z \in W^s_{\text{loc}}(Z_*)$, consider the following collection of  functions defined on $\Omega \cup \Gamma$:
$$\Psi_{\bar w}^Z= Z^{\bar w} \circ L_Z.$$

\noindent
Given a collection of index sets $\{\bar w^i\}$, $\bar w^i \prec \bar s_{n}$ or $\bar w^i \prec \bar t_{n}$, consider the following {\it renormalization microscope}
$$\Phi_{\bar w^0,\bar w^1, \bar w^2, \ldots, \bar w^{k-1},Z}^{k}= \Psi_{\bar w^0}^Z \circ \Psi_{\bar w^1}^{\cRG Z} \circ \ldots \circ  \Psi_{\bar w^{k-1}}^{\cRG^{(k-1)} Z},$$
which we will also denote $\Phi^k_{{\hat w_0^{k-1}},Z}$, $\hat w_0^{k-1}=\left\{\bar w^0,\bar w^1, \bar w^2, \ldots, \bar w^{k-1} \right\}$,  for brevity. 

\begin{lem} The renormalization microscope  maps a set $\Upsilon^i$ onto an element of partition $\cQ_{k n}$ for $Z$.
\end{lem}
\begin{proof}
The claim holds for $k=1$ by the definition $(\ref{Qiw})$ of the elements of the partition.

Assume that it $\Phi^k_{\hat w_0^{k},Z}(\Upsilon^i)$ is an element of partition $\cQ_{kn}$ for $Z$.

Consider  $\Phi^{k+1}_{\hat w_0^k,Z}(\Upsilon^i)$:
$$\Phi^{k+1}_{\hat w_0^{k},Z}(\Upsilon^i)=\Psi_{\bar w^0}^Z \circ \Psi_{\bar w^1}^{\cRG Z} \circ \ldots \circ  \Psi_{\bar w^k}^{\cRG^{k } Z}(\Upsilon^i).$$
By assumption, 
$$\Phi_{\hat w_1^{k},\cRG Z}^k(\Upsilon^i) \equiv \Psi_{\bar w^1}^{\cRG Z} \circ \ldots \circ  \Psi_{\bar w^k}^{\cRG^{k } Z}(\Upsilon^i)$$ 
is an element of the partition of level $k n$ for the pair $\cRG Z$, that is, by $(\ref{Qiw})$  
$$\Phi_{\hat w_1^{k},\cRG Z}^k(\Upsilon^i)=(\cRG Z)^{\bar v}  \circ  L_{\cRG Z} \circ L_{\cRG^2 Z} \circ \ldots \circ L_{\cRG^k Z} (\Upsilon^i),$$
for some admissible $\bar v=(\alpha_0,\beta_0, \alpha_1, \beta_1, \ldots, \alpha_m,\beta_m)$. Therefore, using the shorthand $$\cRG Z=(A_1,B_1),$$ we have:
\begin{eqnarray}
\nonumber \Phi^{k+1}_{\hat w_0^{k},Z}(\Upsilon^i)&=& \Psi_{\bar w^0}^Z \circ \Phi_{\hat w_1^{k},\cRG Z}^k(\Upsilon^i), \\
\nonumber &=& Z^{\bar w^0} \circ L_{Z} \circ (\cRG Z)^{\bar v} \circ  L_{\cRG Z}  \circ \ldots \circ L_{\cRG^k Z} (\Upsilon^i)  \\
\nonumber &=& Z^{\bar w^0} \circ L_{Z} \circ ( B_1^{\beta_m} \circ A_1^{\alpha_m} \circ \ldots \circ  B_1^{\beta_0} \circ A_1^{\alpha_0}) \circ  L_{\cRG Z}  \circ \ldots \circ L_{\cRG^k Z} (\Upsilon^i)  \\
\nonumber &=& Z^{\bar w^0} \circ L_{Z} \circ \Lambda_Z^{-1} \circ H_Z \circ \left( \left(Z^{\tilde t_n}\right)^{\beta_m} \circ \left(Z^{\tilde s_n}\right)^{\alpha_m} \circ \ldots \circ  \left(Z^{\tilde t_n}\right)^{\beta_0} \circ \left(Z^{\tilde s_n}\right)^{\alpha_0} \right) \circ \\
 \nonumber &\phantom{=}&  \phantom{Z^{\bar w^0}} \circ H_z^{-1} \circ \Lambda_Z \circ  L_{\cRG Z}  \circ \ldots \circ L_{\cRG^k Z} (\Upsilon^i)  \\
\nonumber &=& Z^{\bar w^0}  \circ \left(Z^{\tilde t_n}\right)^{\beta_m} \circ \left(Z^{\bar s_n'}\right)^{\alpha_m} \circ \ldots \circ \left(Z^{\bar t_n'}\right)^{\beta_0} \circ \left(Z^{\bar s_n'}\right)^{\alpha_0}  \circ L_Z   \circ \ldots \circ L_{\cRG^k Z} (\Upsilon^i)  \\
\nonumber &=& Z^{\bar u} \circ L_Z  \circ \ldots \circ L_{\cRG^k Z} (\Upsilon^i),
\end{eqnarray}
for some index $\bar u$. By  (\ref{Qiw}), the latter is an element of the partititon $\cQ_{(k+1) n}$.
\end{proof}


\noindent
Since ${\cRG^{l} Z}$ converges to $Z_*$ at a geometric rate, the function $\Psi_{\bar w}^{\cRG^l Z}$  converges to the function $\psi_{\bar w}^{\zeta_*}$, defined in Corollary~\ref{cor:partition},  at a geometric rate in $C^1$-metric. Therefore, by   Corollary~\ref{cor:partition}, there  exists a neighborhood $\cS$ in $W^s_{\text{loc}}(Z_*)$ of $Z_*$, and  sufficiently large $n=p m$ in the definition of $\cRG$ and $l$, such that 
$$\|D \Psi_{\bar w}^{\cRG^l Z} |_{\Upsilon^i} \|_\infty < {1 \over 2},$$
whenever $\cRG^l Z \in \cS$.

For every $Z \in W^s_{\text{loc}}(Z_*)$,  there exists $i_0\in\NN$ such that $\cRG^{i} Z \in \cS$ for $i\geq i_0$. Hence,  there exists $C=C(Z)$, such that
\begin{equation}
\label{miccontract}\|D \Phi^k_Z|_{\Upsilon^i} \|_\infty < {C \over 2^k},
\end{equation}
and thus the renormalization microscope is a uniform metric contraction.

We are now ready to finish the proof.

Select a distinct point $(x_{\bar w},y_{\bar w})$ in each of the sets $Q^i_{\bar w} \in  \cQ_{ln}$. Consider the $ln$-th dynamical partition $\cP_{l n}$ for the pair $T_*$ as defined in Section~\ref{sec:partition}.    Consider a piecewise-constant map  $\varphi_l$ sending the element of the partition with a multi-index $\bar w$ to $(x_{\bar w},y_{\bar w})$. By (\ref{miccontract}), the diameters of the sets $Q^i_{\bar w}$ decrease at a geometric rate. Thus,  the maps $\varphi_l$  converge uniformly to a continuous map $\varphi$ of the interval $[-1,\rho_*]$ which is a homeomorphism onto the image. Set 
$$\varphi([-1,\rho_*])\equiv \Sigma.$$
By construction,
$$\varphi\circ T_*=Z \circ \varphi,$$
and the curve $\Sigma$ is the attractor for the pair $Z$. 
Clearly, the conjugacy $\varphi$ cannot be $C^1$-smooth, since the limiting pair $\zeta_*$ has a critical point at the origin.

\end{proof}

\appendix

\section{Proof of \propref{prop:2Dprojection}}
\label{Apx1}
Let us write
$$\Pi (\tilde A,\tilde B)(x,y)=\left( \left( \tilde \eta_1(x) +a x^4+b x^6 + \tilde \tau_1(x,y) \atop   \tilde \eta_2(x) +a x^4+b x^6+ \tilde \tau_2(x,y)  \right), \left( \tilde \xi_1(x)+c + d x + e x^2 +  \tilde \pi_1(x,y) \atop   \tilde \xi_2(x) +c+d x +e x^2 + \tilde \pi_2(x,y)  \right)   \right).$$

The conditions (\ref{commutation})-(\ref{normalization}) translate into the 
the following system of $5$ equations $\bF(a,b,d,e,c)=0$:
\begin{eqnarray}
\nonumber a&+&b-d \tilde \eta_1(0)-e \tilde \eta_1(0)^2 -c-\left(\tilde \eta _1(\tilde \xi_1(0))-\tilde \eta_1(\tilde \xi_1(0)+c) \right)-\\
\nonumber &-& \left\{{ \tilde \tau_1(\tilde \xi_1(0), \tilde \xi_2(0))-\tilde \tau_1(\tilde \xi_1(0)+c, \tilde \xi_2(0)+c)} \right\}\\
\nonumber &=& \pi_1( \tilde B \circ \tilde A(0,0)- \tilde A \circ \tilde B(0,0))
\end{eqnarray}
\begin{eqnarray}
\nonumber (\tilde \xi_1'(0) &+&d) (4 a +6 b)+\tilde \eta_1'(\tilde \xi_1(0)+c) (\tilde \xi_1'(0)+d)-\tilde \eta_1'(\tilde \xi_1(0)) \tilde \xi_1'(0)+\\
\nonumber &+&\tilde \xi_1'(\tilde \eta_1(0)) \tilde \eta_1'(0)-(\tilde \xi_1'(\tilde \eta_1(0))+d+2 e \tilde \eta_1(0)) \tilde \eta_1'(0)-\\
\nonumber &+&\left\{{\nabla \tilde \tau_1(\tilde \xi_1(0)+c,\tilde \xi_2(0)+c) \cdot (\tilde \xi_1'(0)+d, \tilde \xi_2'(0)+d)-\nabla \tilde \tau_1(\tilde \xi_1(0),\tilde \xi_2(0)) \cdot (\tilde \xi_1'(0), \tilde \xi_2'(0))}\right\}\\
\nonumber & =& \pi_1( \tilde B \circ \tilde A(x,0)- \tilde A \circ \tilde B(x,0))'\arrowvert_{x=0} 
\end{eqnarray}
\begin{eqnarray}
\nonumber (\tilde \xi_1'(0)&+&d)^2 (12 a +30 b)+(\tilde \xi_1''(0)+2 e) (4 a +6 b)+\\
\nonumber &+&\tilde \eta_1''(\tilde \xi_1(0)+c) (\tilde \xi_1'(0)+d)^2+\tilde \eta_k'(\tilde \xi_1(0)+c) (\tilde \xi_1''(0)+2 e)-\\
\nonumber &-&(\tilde \xi_1''(\tilde \eta_1(0)) +2 e)\tilde \eta_1'(0)^2-(\tilde \xi_1'(\tilde \eta_1(0))+d +2 e \tilde \eta_1(0))  \tilde \eta_1''(0)-\\
\nonumber &-&\tilde \eta_1''(\tilde \xi_1(0)) \tilde \xi_1'(0)^2-\tilde \eta_1'(\tilde \xi_1(0)) \tilde \xi_1''(0)+\tilde \xi_1''(\tilde \eta_1(0))\tilde \eta_1'(0)^2+\tilde \xi_1'(\tilde \eta_1(0))  \tilde \eta_1''(0)\\
\nonumber&+&\left\{{\sum_{i,j=1,2}\partial_{i,j}\tilde \tau_1(\tilde \xi_1(0)+c,\tilde \xi_2(0)+c) \tilde (\xi_i'(0)+d) (\tilde \xi_j'(0)+d)+}\right.\\
\nonumber &+&{ \nabla \tilde \tau_1(\tilde \xi_1(0)+c,\tilde \xi_2(0)+c) \cdot (\tilde \xi_1''(0)+2 e, \tilde \xi_2''(0) +2 e)-}\\
\nonumber&-&\left.{\sum_{i,j=1,2}\partial_{i,j}\tilde \tau_1(\tilde \xi_1(0),\tilde \xi_2(0)) \tilde (\xi_i'(0)) (\tilde \xi_j'(0))-\nabla \tilde \tau_1(\tilde \xi_1(0),\tilde \xi_2(0)) \cdot (\tilde \xi_1''(0), \tilde \xi_2''(0)) }\right\}\\
\nonumber & =& \pi_1( \tilde B \circ \tilde A(x,0)- \tilde A \circ \tilde B(x,0))''\arrowvert_{x=0} 
\end{eqnarray}
\begin{eqnarray}
\nonumber (\tilde \xi_1'(0)&+&d) (24 a +120 b) + 3 (\tilde \xi_1'(0)+d) (\tilde \xi_1''(0)+2 e) (12 a +30 b)+ \tilde \xi_1'''(0) (4 a +6 b) +\\
\nonumber &+&\tilde \eta_1'''(\tilde \xi_1(0)+c) (\tilde \xi_1'(0)+d)-\tilde \eta_1'''(\tilde \xi_1(0)) \tilde \xi_1'(0)+\\
\nonumber &+&3 \tilde \eta_1''(\tilde \xi_1(0)+c) (\tilde \xi_1'(0)+d) (\tilde \xi_1''(0)+2 e)-3 \tilde \eta_1''(\tilde \xi_1(0)) \tilde \xi_1'(0) \tilde \xi_1''(0)+\\
\nonumber &+&\tilde \eta_1'(\tilde \xi_1(0)+c) \tilde \xi_1'''(0)-\tilde \eta_1'(\tilde \xi_1(0)) \tilde \xi_1'''(0)-\\
\nonumber &-&3 (\tilde \xi_1''(\tilde \eta_1(0))+2 e) \tilde \eta_1'(0) \tilde \eta_1''(0)+3 \tilde \xi_1''(\tilde \eta_1(0)) \tilde \eta_1'(0) \tilde \eta_1''(0)-\\
\nonumber &-&(\tilde \xi_1'(\tilde \eta_1(0))+d +2 e \tilde \eta_1(0) )  \tilde \eta_1'''(0)+\tilde \xi_1'(\tilde \eta_1(0)) \tilde \eta_1'''(0)-\\
\nonumber&+&\left\{{\sum_{i,j,k=1,2}\partial_{i,j,k}\tilde \tau_1(\tilde \xi_1(0)+c,\tilde \xi_2(0)+c) \tilde (\xi_i'(0)+d) (\tilde \xi_j'(0)+d)( \tilde \xi_k'(0)+d)+} \right.\\
\nonumber&{+}&{3 \sum_{i,j=1,2}\partial_{i,j}\tilde \tau_1(\tilde \xi_1(0)+c,\tilde \xi_2(0)+c) (\tilde \xi_i''(0)+2 e) (\tilde \xi_j'(0)+d)}\\
\nonumber &{+}&{ \nabla \tilde \tau_1(\tilde \xi_1(0)+c,\tilde \xi_2(0)+c) \cdot (\tilde \xi_1'''(0), \tilde \xi_2'''(0))-\sum_{i,j,k=1,2}\partial_{i,j,k}\tilde \tau_1(\tilde \xi_1(0),\tilde \xi_2(0)) \tilde \xi_i'(0)\tilde \xi_j'(0)\tilde \xi_k'(0)-}\\
\nonumber&{-}&\left.{3 \sum_{i,j=1,2}\partial_{i,j}\tilde \tau_1(\tilde \xi_1(0),\tilde \xi_2(0)) \tilde \xi_i''(0) \tilde \xi_j'(0)- \nabla \tilde \tau_1(\tilde \xi_1(0),\tilde \xi_2(0)) \cdot (\tilde \xi_1'''(0), \tilde \xi_2'''(0))} \right\}\\
\nonumber & =& \pi_1( \tilde B \circ \tilde A(x,0)- \tilde A \circ \tilde B(x,0))''\arrowvert_{x=0} 
\end{eqnarray}
\begin{eqnarray}
\nonumber c&=&1-\tilde \xi_1(0).
\end{eqnarray}
The functions in the parenthesis above have the uniform norm $O(\eps^2) \cdot \max\{c,d,e\}$.

Notice, that when the commutator $\pi_1 \left( A \circ B - B \circ  A \right)(x,0)=o(|x|^3)$ and $B$ is normalized appropriately, $B(0,0)=(1,1)$, this system of equations is solved by $a=b=d=e=c=0$. Furthermore, denote $\mathbf{p}=(a,b,d,e,c)$, then the derivative $D_{\mathbf p} \bF(\mathbf 0)$ is given by
$$
\left[
\begin{array}{c c c c c}
\vspace{4mm}
1 & 1 & -\eta_1(0)& -\eta_1(0)^2& a_{1,5} \\
\vspace{4mm}
4 \vareps_1 & 6 \vareps_1& { \tilde \eta_1'(\tilde \xi_1(0))- \atop \nu_1+\delta_1} & -2 \tilde \eta_1(0)  \nu_1 & a_{2,5} \\
\vspace{4mm}
12 \vareps_1^2 + 4 \alpha_1 & 30 \vareps_1^2 +6 \alpha_1 & {2 \vareps_1 \tilde \eta_1''(\tilde \xi_1(0))-\beta_1 \atop +\delta_2} &{2 \tilde \eta_1'(\tilde \xi_1(0)) - \atop 2  \nu_1^2-2 \tilde \eta_1(0)  \beta_1 +\delta_3}  & a_{3,5}\\ 
\vspace{4mm}
{4 \tilde \xi_1'''(0)+ \atop 24 \vareps_1 + 36 \vareps_1 \alpha_1}& {6 \tilde \xi_1'''(0)+ \atop 120 \vareps_1 + 90 \vareps_1 \alpha_1} & {\tilde \eta_1'''(\tilde \xi_1(0)) -\tilde \eta_1'''(0)  \atop + 3 \tilde \eta_1''(\tilde \xi_1(0)) \alpha_1+\delta_4} & {-2 \tilde \eta_1(0) \tilde \eta_1'''(0)- \atop -6 \tilde \eta_1''(\tilde \xi_1(0)) \vareps_1-6 \beta_1 \nu_1 + \delta_5} & a_{4,5}\\ 
\vspace{4mm}
0 & 0 & 0 & 0 & 1
\end{array}
\right],
$$
where $a_{i,5}$ denote certain bounded numbers whose values are irrelevant for the computation of the determinant, $\vareps_i=\tilde \xi_i'(0)$, $\nu_i=\eta'_i(0)$, $\alpha_i=\tilde \xi''_i(0)$, $\beta_i=\tilde \eta_i''(0)$, $i=1,2$, while $\delta_i$ are some number whose size is $O(\eps^2$).

The determinant of the above matrix is $\max\{\vareps_1,\nu_1,\alpha_1,\beta_1,\eps  \}$-close to $4 (\tilde \eta_1'(\tilde \xi_1(0)))^2 \tilde \xi_1'''(0)$ and is nonzero for $(A,B) \in \iota \bD^{\Omega,\Gamma}_{\eps,\delta}$ if $\eps$ is sufficiently small. The claim follows by an application of the Regular Value Theorem.


\end{document}